\newcommand{\Sq}{\text{Sq}}
\newtcolorbox{outputbox}[1][]{
  breakable, 
  colback=black!5!white, 
  colframe=black!75!black, 
  fonttitle=\bfseries,
  sharp corners,
  listing only, 
  listing options={style=tcblatex, basicstyle=\ttfamily\small}, 
  title=Algorithm Output, 
  #1
}
\newcommand{\F}{\mathbb{F}}
\newtheorem{theorem}{Theorem}[section]
\newtheorem{corollary}[theorem]{Corollary}
\theoremstyle{definition}
\newtheorem{example}[theorem]{Example}
\newtheorem{remark}[theorem]{Remark}
\definecolor{codegreen}{rgb}{0,0.6,0}
\definecolor{codegray}{rgb}{0.5,0.5,0.5}
\definecolor{codepurple}{rgb}{0.58,0,0.82}
\definecolor{backcolour}{rgb}{0.95,0.95,0.92}
\lstdefinestyle{SAGEMATHstyle}{
    backgroundcolor=\color{backcolour},   
    commentstyle=\color{codegreen},
    keywordstyle=\color{blue},
    numberstyle=\tiny\color{codegray},
    stringstyle=\color{codepurple},
    basicstyle=\ttfamily\footnotesize,
    breakatwhitespace=false,         
    breaklines=true,                 
    captionpos=b,                    
    keepspaces=true,                 
    numbers=none,                     
    numbersep=5pt,                  
    showspaces=false,                
    showstringspaces=false,
    showtabs=false,                  
    tabsize=2
}
\def\DD{D\kern-.7em\raise0.4ex\hbox{\char '55}\kern.33em}
\title[A Matrix Criterion and Algorithmic Approach for the Peterson Hit Problem (I)]{A Matrix Criterion and Algorithmic Approach\\ for the Peterson Hit Problem: Part I}
\author{\DD\d{\u a}ng V\~o Ph\'uc$^{*}$}
\address{Department of AI, FPT University, Quy Nhon AI Campus\\
An Phu Thinh New Urban Area, Quy Nhon City, Binh Dinh, Vietnam}
\email{dangphuc150488@gmail.com}
\begin{document}

\maketitle

\begin{abstract}
The Peterson hit problem in algebraic topology  is to explicitly determine the dimension of the quotient space $Q\mathcal P_k = \mathbb F_2\otimes_{\mathcal A}\mathcal P_k$ in positive degrees, where $\mathcal{P}_k$ denotes the polynomial algebra in $k$ variables over the field $\mathbb{F}_2$, considered as an unstable module over the Steenrod algebra $\mathcal{A}$. Current approaches to this problem still rely heavily on manual computations, which are highly prone to errors due to the intricate nature of the underlying calculations. To date, no efficient algorithm implemented in any computer algebra system has been made publicly available to tackle this problem in a systematic manner.

Motivated by the above context, in this work, which is considered as Part I of our project, we first establish a criterion based entirely on linear algebra for determining whether a given homogeneous polynomial is ``hit''. Accordingly, we describe the dimensions of the hit spaces. This leads to a practical and reliable computational method for determining the dimension of $Q\mathcal{P}_k$ for arbitrary $k$ and any positive degrees, with the support of a computer algebra system. We then give a concrete implementation of the obtained results as novel algorithms in \textsc{SageMath}. As an application, our algorithm demonstrates that the manually computed result presented in the recent work of Sum and Tai~\cite{Sum4} for the dimension of \(Q\mathcal{P}_5\) in degree $2^{6}$ is not correct. Furthermore, our algorithm determines that \(\dim(Q\mathcal{P}_5)_{2^{7}} = 1985\), which falls within the range \(1984 \leq \dim(Q\mathcal{P}_5)_{2^{7}} \leq 1990\) as estimated in~\cite{Sum4}.

\end{abstract}

\bigskip

\noindent \textbf{Keywords:} Steenrod algebra, Peterson hit problem, \textsc{SageMath}

\medskip

\noindent \textbf{MSC (2020):} 55T15, 55S10, 55S05

\section{Introduction}

The interplay between the Steenrod algebra and polynomial algebras is a cornerstone of modern algebraic topology, providing powerful computational tools for understanding homotopy theory. A central, long-standing question in this area is the ``hit problem'', first posed in a general setting by Franklin Peterson \cite{Peterson}. The foundational context of this problem can be summarized briefly as follows.  Let $\mathcal{P}_k = \mathbb{F}_2[x_1, \dots, x_k]$ be the polynomial algebra in $k$ variables over the finite field $\mathbb{F}_2$, graded by degree. The mod-2 Steenrod algebra, denoted $\mathcal{A}$, is a graded associative algebra over $\mathbb{F}_2$ generated by the Steenrod squares $Sq^i$ for $i \ge 0.$ There is a well-defined action of $\mathcal{A}$ on $\mathcal{P}_k$, turning $\mathcal{P}_k$ into a graded left $\mathcal{A}$-module. The cohomology of the $k$-fold product of infinite-dimensional real projective space, $H^*((\mathbb{R}P^\infty)^{\times k}; \mathbb{F}_2)$, which is isomorphic to $\mathcal{P}_k$ as an $\mathcal{A}$-module. A polynomial $f \in \mathcal{P}_k$ is said to be ``hit'' if it lies in the image of the augmentation ideal of the Steenrod algebra, $\mathcal{A}^+ = \ker(\epsilon: \mathcal{A} \to \mathbb{F}_2)$, where $\epsilon$ is the augmentation map. In other words, $f$ is hit if it can be written as a sum of Steenrod square operations on polynomials of lower degree:
$$  
f \in \operatorname{Im}(\mathcal{A}^+) \iff f = \sum_{i > 0} \operatorname{Sq}^i(g_i) \quad \text{for some } g_i \in \mathcal{P}_k.
 $$
The set of all hit polynomials forms a submodule of $\mathcal{P}_k$, denoted $\mathcal{A}^+\mathcal{P}_k$. The \textit{Peterson hit problem} asks for a minimal set of generators for $\mathcal{P}_k$ as a module over the Steenrod algebra $\mathcal{A}$. This is equivalent to finding a vector space basis for the graded quotient space $Q\mathcal{P}_k:=\mathbb{F}_2 \otimes_{\mathcal{A}} \mathcal{P}_k$, which is isomorphic to $\mathcal{P}_k / \mathcal{A}^+\mathcal{P}_k$. The elements of this basis represent the ``unhit'' or ``essential'' classes, which cannot be generated from lower-degree elements by the action of the Steenrod algebra.

The motivation for studying the hit problem is multifaceted. Geometrically, its solution provides insight into the attaching maps of cells in CW-complexes at the prime 2. Algebraically, it is deeply connected to the study of invariant theory, particularly in determining minimal generating sets for rings of invariants of subgroups of $GL(k, \mathbb{F}_2)$ acting on $\mathcal{P}_k$. Furthermore, its dual formulation has profound applications in homotopy theory via the Singer algebraic transfer, which relates invariants in polynomial rings to the cohomology of the Steenrod algebra itself.

Despite its importance, the problem remains largely open. Complete solutions, in the form of explicit dimension formulas for $Q\mathcal{P}_k$ at all degrees, are known only for a small number of variables. The cases for $k \le 4$ have been resolved through the foundational work of several authors (see the book by Walker and Wood \cite{Walker-Wood} and references therein). For $k \ge 5$, a general solution has not been found. Progress has been made by focusing on specific ``generic'' degrees, where the structure of the problem simplifies enough to be tractable. However, the problem in its full generality remains a formidable challenge.

A critical sub-problem within the hit problem is the task of determining whether a specific, given polynomial is hit. This step is fundamental because a basis for the quotient space $Q\mathcal{P}_k$ can be constructed by identifying a set of admissible monomials whose classes are linearly independent.

The primary difficulty lies in proving that a polynomial is \emph{not} hit. To do so, one must demonstrate that \emph{no possible} linear combination of $Sq^i(g_j)$ can equal the given polynomial. This is a non-trivial task. One approach that has been known to address this challenge uses criteria based on the structural properties of the $\mathcal{A}$-module $\mathcal{P}_k$. For example, a classical result by Wood \cite{Wood} states that $Q\mathcal{P}_k$ is trivial in positive degrees $d$ if $\alpha(d + k) > k$, where $\alpha(m = d+k)$ denotes the number of ones in the binary expansion of the degree $m.$ Other powerful tools include the analysis of Kameko's squaring homomorphism \cite{Kameko}, which provides a way to relate hit spaces at different degrees. However, current approaches all face a major obstacle: they solve this problem for a specific number of variables at specific degrees, corresponding to substantial manual computational workload, and errors are likely to occur when overlooking the verification capabilities of computer algebra tools. Therefore, to address this, the present work contributes directly to the development of algorithmic and computational methods that streamline the process and enhance reliability. Its primary contribution, which specifically addresses the challenge of identifying hit polynomials is the construction of a \textit{novel algorithm in \textsc{SageMath}} based on the general theoretical results established in this paper. By providing these powerful computational tools, this work takes a concrete step forward in solving the hit problem for five variables and beyond. It is also worth noting that Janfada, in \cite{Janfada}, proposed a criterion for identifying hit polynomials. Nevertheless, his result applies specifically to the case of three variables at a fixed degree and does not constitute a general criterion for determining hit polynomials.  

Based on the criterion we established for identifying hit polynomials, we are able to describe the structure of the hit space \( \mathcal{A}^+\mathcal{P}_k \) for any number of variables \( k \) and in every degree \( d \). This structural insight enables a comprehensive understanding of how the action of the Steenrod algebra generates the hit elements in the polynomial algebra $\mathcal P_k.$

As a direct application of this result, \textit{we derive an expression for the dimension of the quotient space \( Q\mathcal{P}_k \) in all positive degrees and for arbitrary values of \( k \).} This formula provides a concrete and computable invariant that characterizes the size of the space of non-hit polynomials, thereby offering a complete answer to the dimensional aspect of the hit problem.

Our algorithm also indicates that the dimension of \(Q\mathcal{P}_5\) in degree $2^{6}$, as computed by hand in the recent work of Sum and Tai~\cite{Sum4}, is not accurate. Furthermore, our algorithm determines that \(\dim(Q\mathcal{P}_5)_{2^{7}} = 1985\), which falls within the range \(1984 \leq \dim(Q\mathcal{P}_5)_{2^{7}} \leq 1990\) as estimated in~\cite{Sum4} (see Remark \ref{nxc} in Section \ref{s3}).

To the best of our knowledge, no published work to date has addressed an explicit algorithm for determining hit polynomials. Moreover, it is widely recognized that manually solving the hit problem becomes virtually infeasible as the number of variables and the degree increase, due to the enormous computational complexity and the limitations of available computational tools, which make errors nearly unavoidable. This situation underscores the importance of algorithmic solutions, the primary focus of our research contribution described above.

\textit{Our work can be considered a meaningful contribution to computer science, as we have achieved important initial progress in addressing a well-known and difficult problem in algebraic topology through computational algorithms.}

\section{Main Theorems and Corollaries}

Let $\mathcal{P}_k^d$ be the $\mathbb{F}_2$-vector space of homogeneous polynomials of degree $d$ in $k$ variables. Recall that a polynomial $f \in \mathcal{P}_k^d$ is hit if and only if it lies in the image of the augmentation ideal $\mathcal{A}^+.$ This means that $f$ can be expressed as a finite sum of the form:
$$ f = \sum_{i>0} Sq^i(g_i),$$
where each $Sq^i$ is a Steenrod square operator and each $g_i \in \mathcal{P}_k^{d-i}$ is a polynomial of appropriate degree. The set of all hit polynomials of degree $d$ forms an $\mathbb{F}_2$-vector subspace of $\mathcal{P}_k^d$, which we denote by $(\mathcal{A}^+\mathcal{P}_k)_d$.

The difficulty of the problem is to determine, for any given polynomial $f$, whether it is hit or not. The following theorem provides a definitive and computable criterion.

\begin{theorem}\label{dlc1}

 Let $\mathcal{B}_d = \{m_1, m_2, \dots, m_N\}$ be the basis of all monomials of degree $d$. This basis induces an isomorphism $\phi: \mathcal{P}_k^d \to \mathbb{F}_2^N$, where a polynomial $f$ is mapped to its coordinate vector $[f] := \phi(f)$. Let $\mathcal{H}_d (\mathcal{P}_k)= \{ Sq^i(g_j) \mid i > 0, \text{ and } g_j \text{ is a monomial of degree } d-i \}$ be a generating set for the subspace of hit polynomials $(\mathcal{A}^+\mathcal{P}_k)_d$. 

Construct a matrix $M$ whose columns are the coordinate vectors $[h]$ for each generator $h \in \mathcal{H}_d(\mathcal{P}_k)$ with respect to the basis $\mathcal{B}_d,$ consequently, a polynomial $f \in \mathcal{P}_k^d$ is a hit polynomial if and only if the linear system of equations $M\mathbf{c} = [f]$ has a solution for the coefficient vector $\mathbf{c}$.
\end{theorem}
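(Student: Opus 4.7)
The plan is to recognize the theorem as a coordinate translation of the very definition of the hit subspace, and to prove it as a pair of elementary implications joined by the $\mathbb{F}_2$-linearity of each $Sq^i$. Since $\phi$ is an isomorphism of $\mathbb{F}_2$-vector spaces, a polynomial $f \in \mathcal{P}_k^d$ lies in $(\mathcal{A}^+\mathcal{P}_k)_d$ if and only if $[f]$ lies in $\phi\bigl((\mathcal{A}^+\mathcal{P}_k)_d\bigr)$, so the task reduces to identifying this image as the column span of $M$.

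First I would establish that $\mathcal{H}_d(\mathcal{P}_k)$ spans $(\mathcal{A}^+\mathcal{P}_k)_d$ as an $\mathbb{F}_2$-vector space. Starting from the definitional expression $f = \sum_{i > 0} Sq^i(g_i)$ with $g_i \in \mathcal{P}_k^{d-i}$, I would expand each $g_i$ on the monomial basis of $\mathcal{P}_k^{d-i}$ as $g_i = \sum_j c_{i,j}\, m_{i,j}$ with $c_{i,j} \in \mathbb{F}_2$, and then invoke the $\mathbb{F}_2$-linearity of each $Sq^i$ to rewrite $f = \sum_{i,j} c_{i,j}\, Sq^i(m_{i,j})$. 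This exhibits $f$ as an $\mathbb{F}_2$-linear combination of elements of $\mathcal{H}_d(\mathcal{P}_k)$; the reverse containment is immediate because every element of $\mathcal{H}_d(\mathcal{P}_k)$ already lies in $(\mathcal{A}^+\mathcal{P}_k)_d$ by construction.

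Next I would transport this equality across $\phi$. By linearity of $\phi$, the image $\phi\bigl((\mathcal{A}^+\mathcal{P}_k)_d\bigr)$ coincides with the $\mathbb{F}_2$-span of the vectors $\{[h] : h \in \mathcal{H}_d(\mathcal{P}_k)\}$, which is by definition the column space of $M$. Consequently $f$ is hit if and only if $[f] \in \operatorname{Col}(M)$, which by standard linear algebra is equivalent to the solvability of $M\mathbf{c} = [f]$, with $\mathbf{c}$ recording the coefficients $c_{i,j}$ that reconstruct the hit decomposition of $f$.

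The only subtlety to check is that $M$ is a genuinely finite matrix so that the linear-algebraic notion of solvability is legitimate: since $Sq^i$ raises degree by $i$, only indices $1 \le i \le d$ contribute, and for each such $i$ the monomial basis of $\mathcal{P}_k^{d-i}$ is finite. I do not foresee a serious conceptual obstacle here; the proof is a careful coordinate translation, and the central move, namely reducing an arbitrary hit representation to one whose inputs are monomials, rests entirely on $\mathbb{F}_2$-linearity of the Steenrod squares. The main value of the theorem is therefore not in the depth of its argument but in the fact that it recasts an otherwise infinite-looking search over all possible $g_i$ into a concrete, finite rank/consistency test for a single matrix equation, which is exactly what one needs to delegate to a computer algebra system.
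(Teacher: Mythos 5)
Your proposal is correct and follows essentially the same route as the paper's proof: transport the membership question through the isomorphism $\phi$, identify $\phi\bigl((\mathcal{A}^+\mathcal{P}_k)_d\bigr)$ with the column space of $M$, and conclude via solvability of $M\mathbf{c}=[f]$. You are in fact slightly more careful than the paper, since you explicitly verify (via $\mathbb{F}_2$-linearity of each $Sq^i$) that $\mathcal{H}_d(\mathcal{P}_k)$ spans the hit subspace and that $M$ is finite, whereas the paper takes the spanning property as a hypothesis of the statement.
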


\begin{proof}

The proof relies on the isomorphism between the space of polynomials and its coordinate vector space. Let all notations be as defined in the statement of the theorem.

\paragraph{$(\Rightarrow)$ Forward Implication:} Assume $f \in \mathcal{P}_k^d$ is a hit polynomial.
By definition, $f$ belongs to the subspace $(\mathcal{A}^+\mathcal{P}_k)_d$. Since $\mathcal{H}_d(\mathcal{P}_k)$ is a generating set for this subspace, $f$ can be written as a linear combination of elements from $\mathcal{H}_d(\mathcal{P}_k).$ That is, there exist coefficients $c_j \in \mathbb{F}_2$ such that:
$$ f = \sum_{j} c_j h_j,$$
where $h_j \in \mathcal{H}_d(\mathcal{P}_k)$. The mapping $p \mapsto [p]$ is a vector space isomorphism. Applying this linear map to both sides of the equation, we get:
$$ [f] = \left[ \sum_{j} c_j h_j \right].$$
By the linearity of the map, this becomes:
$$ [f] = \sum_{j} c_j [h_j].$$
This equation is precisely the definition of matrix-vector multiplication, $M\mathbf{c}$, where the columns of $M$ are the vectors $[h_j]$ and $\mathbf{c}$ is the column vector of coefficients $c_j$. Thus, we have shown that $M\mathbf{c} = [f]$, which means the linear system has a solution.

\paragraph{$(\Leftarrow)$ Reverse Implication:} Assume the linear system $M\mathbf{c} = [f]$ has a solution for some coefficient vector $\mathbf{c} = (c_1, c_2, \dots)^T$.
By the definition of matrix-vector multiplication, the existence of a solution means that the vector $[f]$ can be expressed as a linear combination of the columns of $M$:
$$ [f] = \sum_{j} c_j [h_j],$$
where $[h_j]$ are the columns of $M$. Again, by the linearity of the isomorphism $\phi$, we can combine the terms on the right-hand side: $ [f] = \left[ \sum_{j} c_j h_j \right].$ Since $\phi$ is an isomorphism (and thus injective), if the coordinate vectors are equal, the polynomials themselves must be equal $ f = \sum_{j} c_j h_j.$ By construction, every generator $h_j$ is an element of the hit subspace $(\mathcal{A}^+\mathcal{P}_k)_d$. Since this is a vector subspace, it is closed under linear combinations. Therefore, $f$, being a linear combination of elements of the subspace, must also be an element of the subspace, $ f \in (\mathcal{A}^+\mathcal{P}_k)_d.$ This, by definition, means that $f$ is a hit polynomial.
\end{proof}

\begin{remark}
While Theorem \ref{dlc1} holds for any generating set $\mathcal{H}_d(\mathcal{P}_k),$ the practical efficiency of an algorithm based on this theorem depends critically on the choice of this set and the implementation of the matrix $M.$ Our optimized algorithm, presented in Section \ref{s31} below, makes specific strategic choices---namely, using a filtered generating set based on the operators $Sq^{2^{j}}$ and representing $M$ as a sparse matrix---to make the problem computationally tractable for larger degrees and variable counts. These choices are efficient implementations of the general framework established by the theorem.
\end{remark}

\medskip

\begin{theorem}\label{dlc2}
Let $\phi$ be a linear isomorphism from the polynomial space $\mathcal{P}^{d}_k$ to a standard vector space $V = \mathbb{F}_2^N$, where $N$ is the number of monomials of degree $d$. Let $\mathcal{H}_d(\mathcal{P}_k)= \{ Sq^{2^j}(m) \mid j \ge 0, \ m \text{ is a monomial}, \deg(m) = d - 2^j \}$ be a spanning set for $(\mathcal{A}^+\mathcal{P}_k)_d.$ Construct a matrix $M$ whose columns are the vectors $\phi(h)$ for all $h \in \mathcal{H}_d(\mathcal{P}_k)$. Consequently, 
\[
\dim ((\mathcal{A}^+\mathcal{P}_k)_d) = \operatorname{rank}(M).
\]
\end{theorem}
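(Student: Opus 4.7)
The plan is to reduce this statement to an elementary fact about linear maps: the dimension of the image of a linear map between finite-dimensional vector spaces equals the rank of any matrix that represents it. Since the theorem already takes as a hypothesis that $\mathcal{H}_d(\mathcal{P}_k)$ spans the hit subspace, the proof collapses to a short chain of equalities parallel in spirit to the argument for Theorem \ref{dlc1}.

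First I would observe that because $\phi : \mathcal{P}^d_k \to V$ is a linear isomorphism, its restriction to the subspace $(\mathcal{A}^+\mathcal{P}_k)_d$ remains injective and linear, so
\[
\dim\bigl((\mathcal{A}^+\mathcal{P}_k)_d\bigr) = \dim\bigl(\phi((\mathcal{A}^+\mathcal{P}_k)_d)\bigr).
\]
It therefore suffices to compute the dimension of the image subspace $\phi((\mathcal{A}^+\mathcal{P}_k)_d) \subseteq V$.

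Next, I would invoke the spanning hypothesis together with linearity of $\phi$: every element of $(\mathcal{A}^+\mathcal{P}_k)_d$ can be written as an $\mathbb{F}_2$-linear combination of elements of $\mathcal{H}_d(\mathcal{P}_k)$, so its image under $\phi$ is the corresponding linear combination of the vectors $\phi(h)$. Hence $\phi((\mathcal{A}^+\mathcal{P}_k)_d)$ coincides with the span of $\{\phi(h) : h \in \mathcal{H}_d(\mathcal{P}_k)\}$, which by construction is exactly the column space of $M$. Since the rank of a matrix is defined as the dimension of its column space, the chain of equalities closes as $\dim((\mathcal{A}^+\mathcal{P}_k)_d) = \operatorname{rank}(M)$.

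The main conceptual obstacle lies not in the linear algebra above but in justifying the spanning hypothesis itself, which the theorem assumes without proof. The claim that the restricted sub-collection $\{Sq^{2^j}(m)\}$ already spans the full hit space rests on the classical fact that $\mathcal{A}$ is generated as an algebra by the operations $Sq^{2^j}$ for $j \ge 0$ (so every $Sq^i$ with $i > 0$ decomposes as a polynomial in these basic squares), combined with $\mathbb{F}_2$-linearity to reduce each argument to a single monomial. If one wished to establish the theorem without this hypothesis, this algebraic reduction is where the actual work would sit; within the stated framework, the remaining argument is a routine column-space computation.
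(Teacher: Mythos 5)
Your proof is correct and follows essentially the same route as the paper: the isomorphism $\phi$ preserves the dimension of the hit subspace, its image is the span of the vectors $\phi(h)$ (hence the column space of $M$), and the rank equals the dimension of the column space. Your closing remark that the real mathematical content lies in the spanning hypothesis for $\{Sq^{2^j}(m)\}$ — resting on the fact that $\mathcal{A}$ is generated by the $Sq^{2^j}$ — is a valid and worthwhile observation, though, as you note, the theorem takes that hypothesis as given.
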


\begin{proof}

Since \(\phi\) is an isomorphism, it preserves the structure of vector spaces, including subspaces and their dimensions. Therefore, the hit subspace \( (\mathcal{A}^+\mathcal{P}_k)_d \) is isomorphic to the subspace \( \phi((\mathcal{A}^+\mathcal{P}_k)_d) \subset V \), and their dimensions are equal:
\[ 
\dim((\mathcal{A}^+\mathcal{P}_k)_d) = \dim(\phi((\mathcal{A}^+\mathcal{P}_k)_d)).
\]
From Theorem \ref{dlc1}, we know that a polynomial \( f \in \mathcal{P}_k^d \) is hit if and only if the linear system \( M\mathbf{c} = \phi(f) = [f] \) has a solution. Thus, the image \( \phi((\mathcal{A}^+\mathcal{P}_k)_d) \) is exactly the subspace spanned by the vectors \( \phi(h) \) for \( h \in \mathcal{H}_d(\mathcal{P}_k)\).

By construction, the column space of the matrix \(M\) is 
\[ 
\text{ColumnSpace}(M) = \text{span}_{\mathbb{F}_2}\{\phi(h) \mid h \in \mathcal{H}_d(\mathcal{P}_k)\} = \phi((\mathcal{A}^+\mathcal{P}_k)_d). 
\]
A fundamental theorem of linear algebra states that the dimension of the column space of any matrix is equal to its rank:
\[ 
\dim(\text{ColumnSpace}(M)) = \operatorname{rank}(M).
\]
Combining the equalities from the steps above, we establish the desired result:
\[ 
\dim((\mathcal{A}^+\mathcal{P}_k)_d) = \dim(\phi((\mathcal{A}^+\mathcal{P}_k)_d)) = \dim(\text{ColumnSpace}(M)) = \operatorname{rank}(M). 
\]
Thus, the dimension of the hit space at degree \(d\) is precisely equal to the rank of the matrix \(M\) whose columns are the vector representations of the spanning set \(\mathcal{H}_d(\mathcal{P}_k)\).

\end{proof}

Theorem \ref{dlc2} provides a direct and fully algorithmic method to compute a key structural invariant of the module $\mathcal{P}_k$ over the Steenrod algebra. Instead of reasoning with abstract polynomials, one can construct a concrete matrix and use standard computational algorithms (e.g., Gaussian elimination) to find its rank, thereby determining the exact ``size'' of the hit subspace at any given degree.

\medskip

From the above theorems, several important consequences can be derived, shedding light on the structure of the hit problem.

\begin{corollary}\label{hq1}
If a non-zero hit polynomial admits a representation as a sum of Steenrod square actions, this representation is generally not unique.
\end{corollary}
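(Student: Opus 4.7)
The plan is to translate the uniqueness of representations into the uniqueness of solutions of a linear system, and then to exploit the redundancy inherent in the spanning set $\mathcal{H}_d(\mathcal{P}_k)$. By Theorem \ref{dlc1}, every representation $f = \sum_{j} c_j h_j$ with $h_j \in \mathcal{H}_d(\mathcal{P}_k)$ corresponds bijectively to a solution $\mathbf{c}$ of the linear system $M\mathbf{c} = [f]$, so the complete set of representations of $f$ is the affine set $\mathbf{c}_0 + \ker(M)$ for any particular solution $\mathbf{c}_0$. Consequently, the representation is unique if and only if $\ker(M) = \{\mathbf{0}\}$, equivalently, if and only if the columns of $M$ are linearly independent.

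The next step is to invoke Theorem \ref{dlc2} (and the general observation in its proof) to conclude that $\operatorname{rank}(M) = \dim((\mathcal{A}^+\mathcal{P}_k)_d)$. The columns of $M$ are indexed by the full spanning set $\mathcal{H}_d(\mathcal{P}_k) = \{Sq^i(g_j) : i > 0,\ g_j \text{ a monomial of degree } d - i\}$, whose cardinality grows combinatorially in $d$ and $k$ and vastly exceeds the rank in essentially every positive degree. By the rank--nullity theorem, $\dim_{\mathbb{F}_2} \ker(M) = |\mathcal{H}_d(\mathcal{P}_k)| - \operatorname{rank}(M) > 0$, so $\ker(M)$ contains a non-zero vector $\mathbf{k}$, and $\mathbf{c}_0 + \mathbf{k}$ supplies a genuinely different representation of the same polynomial $f$.

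To make the non-triviality of $\ker(M)$ concrete rather than purely cardinality-based, I would exhibit explicit kernel elements arising from Adem relations. For instance, the relation $Sq^1 Sq^{2n} = Sq^{2n+1}$, applied to any monomial $g$ of degree $d - 2n - 1$ and then expanded by writing $Sq^{2n}(g) = \sum_i m_i$ on the monomial basis, yields the identity $Sq^{2n+1}(g) + \sum_i Sq^1(m_i) = 0$ in $\mathcal{P}_k^d$. Since $Sq^{2n+1}(g)$ and each $Sq^1(m_i)$ is an element of $\mathcal{H}_d(\mathcal{P}_k)$, this identity directly furnishes a non-zero vector in $\ker(M)$. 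Higher Adem relations, the Cartan formula, and the instability relation $Sq^i(x) = 0$ for $i > \deg(x)$ supply additional independent kernel elements, reinforcing that redundancy is the rule, not the exception.

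The main obstacle lies in the soft word \emph{generally}: strictly speaking, uniqueness fails precisely when $\ker(M) \neq \{\mathbf{0}\}$, and one must justify that this is the typical rather than the exceptional situation. My planned resolution is the combinatorial-plus-Adem argument above: the column count of $M$ grows much faster than $\dim((\mathcal{A}^+\mathcal{P}_k)_d)$, and even a single Adem coincidence among the $Sq^i(g_j)$ is enough to force non-uniqueness, so the corollary holds in all degrees that appear in practice.
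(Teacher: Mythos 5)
Your proposal is correct and follows essentially the same route as the paper: both reduce non-uniqueness to the non-triviality of $\ker(M)$ (so that any particular solution $\mathbf{c}_0$ can be shifted by a kernel vector) and both point to the Adem relations as the source of non-zero kernel elements. Your additions --- the rank--nullity count comparing $|\mathcal{H}_d(\mathcal{P}_k)|$ with $\operatorname{rank}(M)$ and the explicit identity $Sq^{2n+1}(g) + \sum_i Sq^1(m_i) = 0$ --- simply make more concrete what the paper asserts when it says the null space is ``almost always non-trivial.''
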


\begin{proof}

By Theorem \ref{dlc1}, if \( f \in \mathcal{P}_k^d \) is a hit polynomial, then there exists at least one solution \( \mathbf{c} \in \mathbb{F}_2^N \) such that \( M\mathbf{c}= [f] \). We now consider the uniqueness of this representation.

The solution $\mathbf{c}$ to the linear system $M\mathbf{c} = [f]$ is unique if and only if the null space (or kernel) of the matrix $M$ is trivial, i.e., it contains only the zero vector. This, in turn, is equivalent to the columns of $M$ being linearly independent.

However, the generators of the hit space, $\{Sq^i(g_j)\}$, are generally a linearly dependent set. The Steenrod algebra $\mathcal{A}$ possesses a rich algebraic structure, including the well-known Adem relations (e.g., $Sq^1Sq^2 = Sq^3$). These relations imply the existence of non-trivial linear combinations of the generators that sum to the zero polynomial.

Each such non-trivial relation corresponds to a non-zero vector $\mathbf{c'}$ in the null space of $M$, satisfying $M\mathbf{c'} = \mathbf{0}$. If $\mathbf{c}$ is a particular solution to $M\mathbf{c} = [f]$, then for any $\mathbf{c'}$ in the null space, the vector $\mathbf{c} + \mathbf{c'}$ is also a solution:
$$ M(\mathbf{c} + \mathbf{c'}) = M\mathbf{c} + M\mathbf{c'} = [f] + \mathbf{0} = [f] $$
Since the null space of $M$ is almost always non-trivial, there exist multiple, and often infinitely many, distinct coefficient vectors $\mathbf{c}$ that produce the same polynomial $f$. Each such vector corresponds to a different explicit representation of $f$.
\end{proof}

Corollary \ref{hq1} confirms that the primary goal in solving the hit problem for a given polynomial is to establish the \textit{existence} of at least one representation. The search for a unique or \textit{canonical} representation is a much harder problem and is generally not required to prove that a polynomial is hit.

\begin{corollary}\label{hq2}
We have
$$ 
\dim(Q\mathcal{P}_k)_d= \dim\left( \mathcal{P}_k^d \big/ (\mathcal{A}^+ \mathcal{P}_k)_d \right) = \binom{d+k-1}{k-1} - \operatorname{rank}(M).
 $$
\end{corollary}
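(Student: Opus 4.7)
The plan is to assemble this identity from three well-established building blocks: the quotient description of $Q\mathcal{P}_k$ in the given degree, the standard dimension formula for a quotient of finite-dimensional vector spaces, and the rank identification already supplied by Theorem~\ref{dlc2}. Since each ingredient is available, this corollary should follow without requiring any additional structural insight beyond bookkeeping.

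First, I would record the degree-$d$ isomorphism
\[
(Q\mathcal{P}_k)_d \;=\; \bigl(\mathbb{F}_2 \otimes_{\mathcal{A}} \mathcal{P}_k\bigr)_d \;\cong\; \mathcal{P}_k^d \big/ (\mathcal{A}^+\mathcal{P}_k)_d,
\]
which is exactly the restatement to a fixed degree of the global isomorphism $Q\mathcal{P}_k \cong \mathcal{P}_k / \mathcal{A}^+\mathcal{P}_k$ recalled in the introduction. This justifies the first equality in the statement.

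Next, since $\mathcal{P}_k^d$ is a finite-dimensional $\mathbb{F}_2$-vector space and $(\mathcal{A}^+\mathcal{P}_k)_d$ is a subspace, I would invoke the standard quotient dimension formula
\[
\dim\bigl(\mathcal{P}_k^d / (\mathcal{A}^+\mathcal{P}_k)_d\bigr) \;=\; \dim(\mathcal{P}_k^d) \;-\; \dim\bigl((\mathcal{A}^+\mathcal{P}_k)_d\bigr).
\]
The first term on the right is the cardinality of the monomial basis $\mathcal{B}_d$, which is precisely the number of degree-$d$ monomials in $k$ variables, a classical stars-and-bars count giving $\binom{d+k-1}{k-1}$.

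Finally, to handle the second term I would apply Theorem~\ref{dlc2} directly, which identifies $\dim((\mathcal{A}^+\mathcal{P}_k)_d)$ with $\operatorname{rank}(M)$ for the specific matrix $M$ built from the spanning set $\mathcal{H}_d(\mathcal{P}_k)$. Substituting both quantities into the quotient dimension formula yields the claimed expression. There is no serious obstacle here: the only point requiring a brief mention is that every step uses finite-dimensionality (so the quotient formula is valid) and that the matrix $M$ in this corollary is understood to be the same one constructed in Theorem~\ref{dlc2}, with columns indexed by the filtered generating set $\{Sq^{2^j}(m)\}$, so that the rank invoked is well-defined.
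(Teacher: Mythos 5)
Your proposal is correct and follows essentially the same route as the paper's own proof: identify the quotient, apply the dimension formula $\dim(V/W)=\dim(V)-\dim(W)$, count monomials by stars and bars to get $\binom{d+k-1}{k-1}$, and invoke Theorem~\ref{dlc2} for $\dim((\mathcal{A}^+\mathcal{P}_k)_d)=\operatorname{rank}(M)$. Your closing remark that the matrix $M$ must be the one from Theorem~\ref{dlc2} is a sensible clarification, but otherwise there is nothing to add.
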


\begin{proof}
This is a direct application of the dimension theorem for vector spaces (also known as the rank-nullity theorem in a more general form). For any subspace $W$ of a finite-dimensional vector space $V$, the dimension of the quotient space $V/W$ is given by:
$$ \dim(V/W) = \dim(V) - \dim(W) $$
In our case, we identify $V = \mathcal{P}_k^d$ and $W = (\mathcal{A}^+\mathcal{P}_k)_d$. Consequently, the proof of the theorem rests on the following key observations:
\begin{enumerate}
    \item The dimension of the total space, $\dim(\mathcal{P}_k^d)$, is the number of distinct monomials of degree $d$ in $k$ variables. Hence, $$\dim \mathcal{P}_k^d  = \binom{d+k-1}{k-1}.$$
Indeed, any such monomial can be written in the form $ x_1^{a_1} x_2^{a_2} \cdots x_k^{a_k},$ where the exponents $a_1, a_2, \dots, a_k$ are non-negative integers. The condition that the polynomial is homogeneous of degree $d$ translates to the condition that the sum of the exponents must be exactly $d$:
$$ a_1 + a_2 + \dots + a_k = d, \quad \text{where each } a_i \in \{0, 1, 2, \dots\}.$$
Therefore, the problem of finding the dimension of the vector space $\mathcal{P}_k^d$ is equivalent to the combinatorial problem of counting the number of distinct non-negative integer solutions to the equation above. We can solve this counting problem using a classic and intuitive combinatorial technique known as ``\textit{stars}'' and ``\textit{bars}''.

Let's imagine we have $d$ identical items, which we will represent as ``\textit{stars}'' ($\star$). Our goal is to partition these $d$ stars into $k$ distinct groups. The number of stars in group $i$ will correspond to the value of the exponent $a_i$. To separate a sequence of items into $k$ groups, we need exactly $k-1$ dividers, which we will represent as ``\textit{bars}'' ($|$).

Let's find the dimension of $\mathcal{P}_3^4$, which is the space of homogeneous polynomials of degree $d=4$ in $k=3$ variables ($x_1, x_2, x_3$). This corresponds to finding the number of non-negative integer solutions to the equation: $ a_1 + a_2 + a_3 = 4.$ In our analogy, we have $d=4$ stars ($\star\star\star\star$) and we need to place them into $k=3$ groups, which requires $k-1=2$ bars ($||$).

Every unique arrangement of these stars and bars corresponds to a unique solution, and thus a unique monomial:
\begin{itemize}
    \item The arrangement \texttt{$\star\star|\star|\star$} corresponds to:
    \begin{itemize}
        \item 2 stars in the first group ($a_1=2$).
        \item 1 star in the second group ($a_2=1$).
        \item 1 star in the third group ($a_3=1$).
    \end{itemize}
    This gives the solution $(2, 1, 1)$, which corresponds to the monomial $x_1^2 x_2x_3.$

    \item The arrangement \texttt{$|\star\star\star|\star$} corresponds to:
    \begin{itemize}
        \item 0 stars in the first group ($a_1=0$).
        \item 3 stars in the second group ($a_2=3$).
        \item 1 star in the third group ($a_3=1$).
    \end{itemize}
    This gives the solution $(0, 3, 1)$, corresponding to the monomial $x_2^3 x_3$. Note how an empty group naturally represents a zero exponent.

    \item The arrangement \texttt{$\star\star\star\star||$} corresponds to:
    \begin{itemize}
        \item 4 stars in the first group ($a_1=4$).
        \item 0 stars in the second group ($a_2=0$).
        \item 0 stars in the third group ($a_3=0$).
    \end{itemize}
    This gives the solution $(4, 0, 0)$, corresponding to the monomial $x_1^4$.
\end{itemize}

From the analogy, we can see that counting the number of solutions is equivalent to counting the number of distinct ways to arrange a sequence containing $d$ stars and $(k-1)$ bars. We have a total of $d + (k-1)$ positions in our sequence. The problem then becomes choosing which of these positions will be filled with bars (the rest will automatically be filled with stars).

The number of ways to choose $k-1$ positions for the bars from a total of $d+k-1$ available positions is given directly by the binomial coefficient $\binom{d+k-1}{k-1}.$ Equivalently, we could choose $d$ positions for the stars from the $d+k-1$ total positions, which gives $\binom{d+k-1}{d}$. Due to the symmetry of binomial coefficients, these two formulas are identical:
$$ \binom{d+k-1}{k-1} = \binom{d+k-1}{(d+k-1)-(k-1)} = \binom{d+k-1}{d}.$$

    \item From Theorem \ref{dlc2}, the dimension of the subspace, $\dim\left( (\mathcal{A}^+ \mathcal{P}_k)_d \right)$, is equal to $\operatorname{rank}(M)$.
\end{enumerate}
Substituting these into the formula yields the desired result.
\end{proof}

Corollary \ref{hq2} is arguably the most powerful consequence, as it directly addresses the central question of the Peterson hit problem: determining the dimension of the space of ``unhit'' elements. It transforms this abstract algebraic question into a concrete computational task: calculate the number of monomials, construct the matrix $M$, find its rank, and subtract. \textit{Therefore, the algorithm described in Theorem \ref{dlc2} thus becomes a complete tool for solving for this dimension.}

\section{Algorithmic implementation}\label{s3}

\subsection{Algorithm for determining hit polynomials}\label{s31}

\emph{}

\medskip

While Theorem~\ref{dlc1} provides a theoretical framework, a direct implementation is computationally infeasible due to the combinatorial explosion in the size of the resulting linear system. To overcome this challenge, the \textsc{SageMath} implementation presented in this section utilizes an \textbf{adaptive hybrid model}. The adaptive workflow, which intelligently chooses between fast serial execution for small problems and massively parallel processing for large ones, is detailed in the flowchart below. Its key technical aspects are explained, followed by the complete code in LISTING \ref{lst:SAGEMATH_code}.

\medskip

\noindent\rule{\textwidth}{0.4pt}
\begin{center}
\textbf{Algorithm 1: Determining if a polynomial is hit (Hybrid Approach)}
\end{center}
\noindent\rule{\textwidth}{0.4pt}
\noindent\textbf{Input:} A homogeneous polynomial $f \in \mathcal{P}_k^d$.\\
\textbf{Output:} \texttt{True} if $f$ is hit (with an explicit representation), \texttt{False} otherwise.
\vspace{0.5em}

\noindent\textit{$\blacktriangleright$ Phase 1: Setup}
\begin{enumerate}
    \item Define the polynomial ring $\mathcal{P}_k= \mathbb{F}_2[x_1, \dots, x_k]$.
    \item Generate the monomial basis $\mathcal{B}_d$ for the vector space $\mathcal{P}_k^d$.
    \item Create an isomorphism $\phi: \mathcal{P}_k^d \to \mathbb{F}_2^N$ by mapping each monomial to a standard basis vector.
\end{enumerate}

\noindent\textit{$\blacktriangleright$ Phase 2: Adaptive Generator Construction}
\begin{enumerate}
    \setcounter{enumi}{3}
    \item Initialize an empty list for tasks, $T$.
    \item Initialize an empty list for hit generators, $\mathcal{H}'_d(\mathcal{P}_k)$.
    \item \textit{-- Gather all potential generator tasks --}
    \item \textbf{for all} Steenrod operators $Sq^{2^j}$ where $d-2^j \ge 0$ \textbf{do}
    \begin{enumerate}
        \item Let $d' = d - 2^j$.
        \item \textbf{if} $\alpha(d'+k) \le k$ \textbf{then} \textit{$\blacktriangleright$ Apply theoretical filter}
        \begin{enumerate}
            \item \textbf{for all} monomials $g \in \mathcal{P}_k^{d'}$ \textbf{do}
            \item[] \quad Append task $(Sq^{2^j}, g)$ to $T$.
            \item \textbf{end for}
        \end{enumerate}
        \item \textbf{end if}
    \end{enumerate}
    \item \textbf{end for}
    \item \textit{-- Decide execution strategy based on the number of tasks --}
    \item \textbf{if} size of $T < \texttt{THRESHOLD}$ \textbf{then} \textit{$\blacktriangleright$ Execute in Serial Mode}
    \begin{enumerate}
        \item \textbf{for each} task $(Sq^k, g)$ in $T$ \textbf{do}
        \item[] \quad $h \gets Sq^k(g)$.
        \item[] \quad \textbf{if} $h \neq 0$ \textbf{then} Append the triple $(h, Sq^k, g)$ to $\mathcal{H}'_d(\mathcal{P}_k)$.
        \item[] \quad \textbf{end if}
        \item \textbf{end for}
    \end{enumerate}
    \item \textbf{else} \textit{$\blacktriangleright$ Execute in Parallel Mode}
    \begin{enumerate}
        \item Initialize a pool of parallel workers.
        \item Distribute tasks in $T$ among workers to compute a list of results $R$.
        \item $\mathcal{H}'_d(\mathcal{P}_k) \gets$ Filtered non-zero results from $R$.
    \end{enumerate}
    \item \textbf{end if}
\end{enumerate}

\noindent\textit{$\blacktriangleright$ Phase 3 $\&$ 4: Sparse System Construction and Solution}
\begin{enumerate}
    \setcounter{enumi}{10}
    \item Construct a sparse matrix $M$ where column $j$ is the coordinate vector $[h_j] = \phi(h_j)$ for each generator $(h_j, \dots, \dots) \in \mathcal{H}'_d(\mathcal{P}_k)$.
    \item Compute the target coordinate vector $[f] = \phi(f)$.
    \item \textbf{try} to solve the linear system $M\mathbf{c} = [f]$ for a coefficient vector $\mathbf{c}$.
\end{enumerate}

\noindent\textit{$\blacktriangleright$ Phase 5: Result Analysis}
\begin{enumerate}
    \setcounter{enumi}{13}
    \item \textbf{if} a solution $\mathbf{c}$ exists \textbf{then}
    \begin{enumerate}
        \item Construct the explicit representation $f = \sum_j c_j h_j$.
        \item \textbf{return} \texttt{True}, representation.
    \end{enumerate}
    \item \textbf{else}
    \begin{enumerate}
        \item \textbf{return} \texttt{False}.
    \end{enumerate}
    \item \textbf{end if}
\end{enumerate}

\medskip

The algorithm's efficiency stems from its adaptive use of an optimized generating subset, denoted $\mathcal{H}'_d(\mathcal{P}_k)$. Instead of a single fixed method, the algorithm first compiles a list of all potential generator tasks that satisfy the necessary conditions (i.e., using operators of the form $Sq^{2^j}$ and meeting the admissibility condition $\alpha(d' + k) \le k$). It then evaluates the scale of the problem by counting the total number of these tasks. For small-scale problems, where the overhead of parallelization would be detrimental, it automatically employs a fast \textbf{serial loop}. Conversely, for large-scale problems, it seamlessly transitions to a \textbf{parallel mode}, distributing the computationally intensive calculations across all available CPU cores to dramatically reduce execution time. This hybrid strategy ensures optimal performance across the entire spectrum of problem sizes.

\medskip

\noindent\rule{\textwidth}{0.4pt}
\begin{center}
\textbf{\textbf{Detailed SageMath code for the algorithm}}
\end{center}
\noindent\rule{\textwidth}{0.4pt}

\medskip

\begin{lstlisting}[language=Python, caption={\textbf{\textsc{SageMath}-based algorithm for detecting hit polynomials}}, label={lst:SAGEMATH_code}]
from sage.all import *
from collections import defaultdict
from multiprocessing import Pool
import os

def alpha(n):
    """Computes the sum of the bits in the binary expansion of an integer n."""
    return sum(int(bit) for bit in bin(n)[2:])

def get_sq_function(P):
    """Factory function that creates an optimized `sq` function with its own cache."""
    memo = {}
    def sq_recursive(k, mono):
        state = (k, mono)
        if state in memo:
            return memo[state]
        if k == 0:
            return mono
        if mono == 1:
            return P(1) if k == 0 else P(0)
        gens = P.gens()
        first_var_index = -1
        for i, v in enumerate(gens):
            if mono.degree(v) > 0:
                first_var_index = i
                break
        v = gens[first_var_index]
        e = mono.degree(v)
        rest_of_mono = mono // (v**e)
        total = P(0)
        for i in range(k + 1):
            if binomial(e, i) % 2 == 1:
                total += v**(e + i) * sq_recursive(k - i, rest_of_mono)
        memo[state] = total
        return total
    def sq(k, f):
        if f.is_zero() or k < 0:
            return P(0)
        if k == 0:
            return f
        total_sum = P(0)
        for mono_tuple, coeff in f.dict().items():
            if coeff % 2 == 1:
                monomial_object = P.monomial(*mono_tuple)
                total_sum += sq_recursive(k, monomial_object)
        return total_sum
    return sq

# ---- FUNCTIONS FOR PARALLEL PROCESSING (IMPROVED) ----
worker_P = None
worker_mono_map = None
worker_sq = None

def init_worker(ring, a_map):
    """Initializer for each worker process."""
    global worker_P, worker_mono_map, worker_sq
    worker_P = ring
    worker_mono_map = a_map
    worker_sq = get_sq_function(worker_P)

def worker_task(task):
    """
    Performs one unit of work.
    It now receives an exponent tuple which is more efficient than a string.
    """
    k_op, g_exponents = task
    g = worker_P.monomial(*g_exponents) # Recreate monomial from exponents
    hit_poly = worker_sq(k_op, g)
    if not hit_poly.is_zero():
        row_indices = []
        for m_tuple, c in hit_poly.dict().items():
            monomial_m = worker_P.monomial(*m_tuple)
            if monomial_m in worker_mono_map:
                row_indices.append(worker_mono_map[monomial_m])
        if row_indices:
            return (row_indices, k_op, g_exponents) # Return exponents
    return None

# ----------------- MAIN DRIVER FUNCTION (HYBRID APPROACH) -----------------
def show_hit_representation_hybrid(poly, PARALLEL_TASK_THRESHOLD=10000):
    """
    The main driver function using a hybrid approach.
    - Runs in SERIAL mode for small problems (faster due to no overhead).
    - Runs in PARALLEL mode for large problems to leverage multiple cores.
    """
    # --- Step 1: Common Initialization and Setup ---
    P = poly.parent()
    d = poly.degree()
    k_vars = P.ngens()
    if d == 0:
        print(f"Polynomial ({poly}) is of degree 0 and is not considered 'hit'.")
        return False
    print(f"Analyzing polynomial of degree {d} in {k_vars} variables...")
    all_monos_d = list(P.monomials_of_degree(d))
    V = VectorSpace(GF(2), len(all_monos_d))
    mono_map = {m: i for i, m in enumerate(all_monos_d)}
    print(f"Vector space created with dimension {len(all_monos_d)}.")

    # --- Step 2: Prepare tasks and decide execution mode ---
    tasks = []
    sq_powers = [2**i for i in range(d.nbits()) if 2**i > 0]
    for k_op in sq_powers:
        deg_g = d - k_op
        if deg_g < 0 or alpha(deg_g+ k_vars) > k_vars:
            continue
        for g in P.monomials_of_degree(deg_g):
            tasks.append((k_op, g))
    total_tasks = len(tasks)
    print(f"Total generator tasks to process: {total_tasks}")
    hit_space_info = []

    # --- CHOOSE EXECUTION MODE ---
    if total_tasks < PARALLEL_TASK_THRESHOLD:
        # --- SERIAL MODE (for small problems) ---
        print(f"Number of tasks ({total_tasks}) is below threshold ({PARALLEL_TASK_THRESHOLD}). Running in fast SERIAL mode.")
        sq = get_sq_function(P)
        for k_op, g in tasks:
            hit_poly = sq(k_op, g)
            if not hit_poly.is_zero():
                row_indices = []
                for m_tuple, c in hit_poly.dict().items():
                    monomial_m = P.monomial(*m_tuple)
                    if monomial_m in mono_map:
                        row_indices.append(mono_map[monomial_m])
                if row_indices:
                    hit_space_info.append((row_indices, k_op, g))
    else:
        # --- PARALLEL MODE (for large problems) ---
        print(f"Number of tasks ({total_tasks}) is above threshold. Running in PARALLEL mode.")
        num_workers = os.cpu_count()
        print(f"Starting parallel computation with {num_workers} worker processes...")
        exponent_tasks = [(k, g.exponents()) for k, g in tasks]
        with Pool(initializer=init_worker, initargs=(P, mono_map)) as pool:
            results = pool.map(worker_task, exponent_tasks)
        print("Parallel computation finished. Collecting results...")
        for res in results:
            if res is not None:
                row_indices, k_op, g_exponents = res
                g = P.monomial(*g_exponents) # Recreate monomial object
                hit_space_info.append((row_indices, k_op, g))

    # --- Step 3 onwards: Common logic for matrix construction and solving ---
    if not hit_space_info:
        print("-" * 20)
        print(f"FAILURE: Polynomial ({poly}) is not a 'hit'.")
        print("(The hit space generated is trivial, so no solution can exist).")
        return False
    print("Constructing the sparse matrix...")
    matrix_dict = {}
    for j, (row_indices, _, _) in enumerate(hit_space_info):
        for row_idx in row_indices:
            matrix_dict[(row_idx, j)] = 1
    nrows = len(all_monos_d)
    ncols = len(hit_space_info)
    M = Matrix(GF(2), nrows, ncols, matrix_dict, sparse=True)
    num_non_zero = len(M.nonzero_positions())
    print(f"Constructed a sparse matrix of size {M.nrows()}x{M.ncols()} with {num_non_zero} non-zero entries.")

    target_vector = V.zero_vector()
    for m_tuple, c in poly.dict().items():
        if c % 2 == 1:
            monomial_m = P.monomial(*m_tuple)
            if monomial_m in mono_map:
                target_vector[mono_map[monomial_m]] = 1

    print("Solving the linear system... (This may still take a long time for huge systems)")
    try:
        coeffs = M.solve_right(target_vector)
        grouped_terms = defaultdict(list)
        for i, c in enumerate(coeffs):
            if c == 1:
                _, k, g = hit_space_info[i]
                grouped_terms[k].append(g)
        representation_parts = []
        for k in sorted(grouped_terms.keys()):
            monomials_str = " + ".join(str(m) for m in grouped_terms[k])
            representation_parts.append(f"Sq^{k}({monomials_str})")
        representation_str = " + ".join(representation_parts)
        print("-" * 20)
        print(f"SUCCESS: Polynomial ({poly}) is a 'hit'.")
        print(f"Explicit representation:\n{poly} = {representation_str}")
        print("-" * 20)
        return True
    except ValueError:
        print("-" * 20)
        print(f"FAILURE: Polynomial ({poly}) is not a 'hit'.")
        print("(The linear system has no solution with this set of generators).")
        print("-" * 20)
        return False
\end{lstlisting}

\medskip

For readers to easily access this tool, we explain some key aspects of the \textbf{hybrid algorithm}, which intelligently switches between serial and parallel execution, as follows:

\begin{itemize}
    \item \textbf{The context ($\mathcal{P}_k^d$ and $\mathcal{B}_d$):} 
    The code first defines the polynomial ring \texttt{P} and then creates the basis 
    \texttt{all\_monos\_d} and the mapping \texttt{mono\_map}. These components 
    establish the vector space $\mathcal{P}_k^d$ and its isomorphism to 
    $\mathbb{F}_2^N$. This foundational setup is performed regardless of the 
    subsequent execution mode and remains fundamental to the linear algebra approach.

    \item \textbf{A Hybrid Strategy for the Generating Set ($\mathcal{H}'_d(\mathcal{P}_k)$):} 
    This is the most significant enhancement. While the core filtering strategy using 
    $\operatorname{Sq}^{2^j}$ (via \texttt{sq\_powers}) and the \texttt{alpha()} 
    function is retained, the execution model is now dynamic:
    \begin{enumerate}
        \item First, the algorithm gathers a complete list of all potential 
        generator \textit{tasks} that pass the initial filters.
        
        \item It then compares the total number of tasks against a configurable threshold, denoted by \texttt{PARALLEL\_TASK\_THRESHOLD}.
        
        \item \textbf{If the task count is low}, it executes in a fast \textbf{serial mode}, 
        iterating directly through the tasks. This avoids the overhead of 
        parallelization for small problems where serial execution is faster.
        
        \item \textbf{If the task count is high}, it automatically switches to a 
        highly scalable \textbf{parallel mode}. The list of tasks is distributed 
        among all available CPU cores using Python's \texttt{multiprocessing} 
        library, dramatically accelerating what is typically the primary bottleneck 
        of the computation.
    \end{enumerate}

    \item \textbf{Efficient Data Handling and Sparse Matrix ($M$):} 
    To handle potentially vast dimensions, the algorithm still populates a Python 
    dictionary, \texttt{matrix\_dict}, with only the non-zero entries. The results 
    from the computation (whether from the serial loop or the parallel workers) 
    populate this dictionary. To ensure robustness and efficiency in the parallel 
    path, monomial data is transferred as lightweight \textbf{exponent tuples} 
    rather than complex objects. The final matrix is then constructed in a single, 
    memory-efficient operation via \texttt{M = Matrix(GF(2), ..., matrix\_dict, sparse=True)}.

    \item \textbf{The system ($M\mathbf{c} = [f]$):} 
    This step remains unchanged. The input \texttt{poly} is converted into its 
    \texttt{target\_vector}, $[f]$. The command \texttt{M.solve\_right(target\_vector)} 
    is used to solve the linear system $M\mathbf{c} = [f]$. Because \texttt{M} is a 
    sparse matrix, \textsc{SageMath} automatically dispatches this task to a powerful 
    iterative solver (such as the Wiedemann algorithm) that is specifically 
    designed for large, sparse systems over finite fields.

    \item \textbf{The result:} 
    The success or failure of the \texttt{solve\_right} command remains the core 
    of the determination. A successful return provides the coefficient vector 
    \texttt{coeffs} used to build the explicit representation, while a 
    \texttt{ValueError} exception indicates that no solution exists with the given 
    set of generators, meaning the polynomial is not hit.
\end{itemize}

\medskip

\begin{example}
\begin{itemize}

\item[(i)] We illustrate the above algorithm with the polynomial: $$f = x_1^2x_2^5x_3^4x_4^3 + x_1^3x_2^3x_3^4x_4^4 + x_1^3x_2^2x_3^4x_4^5 + x_1^2x_2^3x_3^4x_4^5 + x_1^3x_2^4x_3x_4^6\in \mathcal P_4^{14}.$$ The corresponding \textsc{SageMath} code implementing the above-described algorithm for this polynomial is given below. (Immediately after ``\texttt{return False}'' in the last line of the algorithm above, we insert the following line of code.)

\medskip

\begin{lstlisting}[language=Python]
if __name__ == '__main__':
    
    # 1. Setup the Polynomial Ring for 4 variables
    P_user = PolynomialRing(GF(2), names=['x1', 'x2', 'x3', 'x4'])
    
    # 2. Define the specific polynomial string
    poly_user_str = "x1^2*x2^5*x3^4*x4^3 + x1^3*x2^3*x3^4*x4^4 + x1^3*x2^2*x3^4*x4^5 + x1^2*x2^3*x3^4*x4^5 + x1^3*x2^4*x3*x4^6"
    
    # 3. Create the polynomial object from the string
    poly_user = P_user(poly_user_str)
    
    print(f"Testing with polynomial: {poly_user}")
    
    # 4. Run the hybrid function
    # The function will automatically choose the best mode for this case.
    show_hit_representation_hybrid(poly_user)
\end{lstlisting}

\medskip

The output confirms that the polynomial $f$ is hit, and it admits an explicit representation of the form $f = \sum_{i > 0}Sq^{i}(f_i),$ as follows:

\begin{align*}
f&= x_1^2 x_2^5 x_3^4 x_4^3 + x_1^3 x_2^3 x_3^4 x_4^4 + x_1^3 x_2^2 x_3^4 x_4^5 + x_1^2 x_2^3 x_3^4 x_4^5 + x_1^3 x_2^4 x_3 x_4^6 \\
&= \operatorname{Sq}^1\big( x_1 x_2^2 x_3^2 x_4^8 + x_1 x_2^3 x_3 x_4^8 + x_1 x_2^2 x_3^4 x_4^6 + x_1 x_2^4 x_3^2 x_4^6 + x_1^3 x_2^3 x_3 x_4^6 \\
&\qquad + x_1 x_2^3 x_3^4 x_4^5 + x_1^3 x_2^3 x_3^2 x_4^5 + x_1 x_2^6 x_3 x_4^5 + x_1 x_2^4 x_3^4 x_4^4 + x_1^5 x_2^2 x_3^2 x_4^4 + x_1^3 x_2^3 x_3^4 x_4^3 \big) \\
&\quad + \operatorname{Sq}^2\big( x_1 x_2^2 x_3 x_4^8 + x_1 x_2^3 x_3^2 x_4^6 + x_1^2 x_2^3 x_3 x_4^6 + x_1 x_2^2 x_3^4 x_4^5 + x_1^2 x_2^3 x_3^2 x_4^5 \\
&\qquad + x_1^3 x_2^2 x_3^2 x_4^5 + x_1 x_2^5 x_3 x_4^5 + x_1^2 x_2^3 x_3^4 x_4^3 + x_1^3 x_2^2 x_3^4 x_4^3 + x_1^3 x_2^4 x_3^2 x_4^3 \big) \\
&\quad + \operatorname{Sq}^4\big( x_1^3 x_2^2 x_3^2 x_4^3 \big).
\end{align*}

\medskip

\item[(ii)] We now consider another example to illustrate the algorithm, applied to the polynomial $$g = x_1x_2^2x_3^2x_4 + x_1x_2^2x_3x_4^2 + x_1x_2x_3^2x_4^2\in \mathcal{P}_4^{6}.$$ For this polynomial, the algorithm readily produces the following output:
$$ g = Sq^{1}(x_1x_2x_3x_4^{2} +x_1x_2x_3^{2}x_4 + x_1x_2^{2}x_3x_4) + Sq^{2}(x_1x_2x_3x_4).$$
However, by a simple observation, we can see that
$$ g = Sq^{1}(x_1^{2}x_2x_3x_4) + Sq^{2}(x_1x_2x_3x_4).$$
Hence, $g$ is a hit polynomial with multiple decompositions, illustrating Corollary~\ref{hq1}.
\end{itemize}
{\bf Note.} Both illustrative examples provided above can be checked directly via the \textsc{SageMath} online interface at \url{https://sagecell.sagemath.org/}. 
\end{example}

\medskip

\subsection{Algorithm for computing hit space dimensions}\label{s32}

\emph{}

\medskip

Translating the matrix-based criterion from Theorem \ref{dlc2} into a practical and efficient computational method requires overcoming two primary challenges: the immense memory required for the hit matrix and the prohibitive CPU time needed to compute its entries. The algorithm described in LISTING \ref{lst:SAGEMATH_code_2} below therefore employs a dual-pronged optimization strategy to render the problem tractable even at high degrees. First, it leverages a sparse matrix representation to avoid prohibitive memory costs. Second, and most critically for performance, it implements an adaptive hybrid computation model that intelligently switches between a fast serial process for small-scale problems and massively parallel execution for large-scale ones. The key stages of this highly optimized process, from initial setup and adaptive computation to the final rank determination, are broken down in detail below.

\medskip

\begin{center}
\scalebox{0.6}{
\begin{tikzpicture}[
    transform shape,
    node distance=1.5cm and 1.5cm,
    phase/.style={
        rectangle, draw=orange!80, thick,
        minimum width=4cm, minimum height=1.1cm, text width=3.8cm,
        align=center, font=\sffamily\small\bfseries, fill=orange!10, rounded corners=3pt
    },
    detail/.style={
        rectangle, draw=green!70,
        minimum width=2.6cm, minimum height=0.9cm, text width=2.4cm,
        align=center, font=\sffamily\scriptsize, fill=green!10, rounded corners=2pt
    },
    decision/.style={
        diamond, aspect=1.5, draw=red!80, thick,
        minimum width=2.5cm, minimum height=1cm, text width=2.2cm,
        align=center, font=\sffamily\scriptsize\bfseries, fill=red!10
    },
    path_box/.style={
        rectangle, draw=cyan!70!black, thick,
        minimum width=2.8cm, minimum height=1.5cm, text width=2.6cm,
        align=center, font=\sffamily\footnotesize\itshape, fill=cyan!10, rounded corners=3pt
    },
    result/.style={
        rectangle, draw=purple!80,
        minimum width=4.5cm, minimum height=1cm, text width=4.2cm,
        align=center, font=\sffamily\footnotesize\bfseries, fill=purple!15, rounded corners=3pt
    },
    highlighted/.style={ 
        fill=yellow!25, draw=orange!80
    },
    main_flow/.style={->, thick, red!80, -{Stealth[scale=1.2]}},
    data_flow/.style={->, dashed, blue!80, -{Stealth}},
    dep_arrow/.style={->, dotted, green!60!black, -{Stealth}}, 
    internal_flow/.style={->, -{Stealth}}
]

\node[phase, highlighted, minimum width=4.5cm, text width=4.2cm] (main) at (0,13) {
  Hit Space Dimension\\Algorithm
};

\node[phase] (init) [below=1.5cm of main] {
  Phase 1: Initialization
};
\node[detail] (ring) [below=1cm of init, xshift=-4.5cm] {Ring $\mathcal{P}_k$};
\node[detail] (basis) [right=1.2cm of ring] {Basis $\mathcal{B}_d$};
\node[detail] (memo) [right=1.2cm of basis] {Memoized\\Sq Hom.};
\node[detail] (iso) [right=1.2cm of memo] {Isomorphism $\phi$};

\node[phase] (gen) [below=2.8cm of init] {
  Phase 2: Adaptive Generation
};
\node[detail] (tasks) [below=0.8cm of gen] {
  Gather All\\Generator Tasks
};
\node[decision] (decision) [below=1cm of tasks] {
  Tasks $<$\\THRESHOLD?
};
\node[path_box] (serial) [below left=0.8cm and 1cm of decision] {
  Serial Mode:\\Single-core loop
};
\node[path_box] (parallel) [below right=0.8cm and 1cm of decision] {
  Parallel Mode:\\Distribute to\\all CPU cores
};
\node[detail, highlighted] (hit_coll) [below=2.2cm of decision] {
  Hit Coordinate\\Collection $\mathcal{V}_H$
};

\node[phase, highlighted] (matrix) [below=1.5cm of hit_coll] {
  Phase 3: Sparse Matrix Const.
};

\node[phase, highlighted] (rank) [below=1.5cm of matrix] {
  Phase 4: Rank Computation
};
\node[detail] (iterative) [left=2.5cm of rank] {
  Iterative Solvers\\(Wiedemann)
};
\node[detail] (rank_calc) [right=2.5cm of rank] {
  $\text{rank}(M)$
};

\node[phase] (final) [below=1.8cm of rank] {
  Phase 5: Final Result
};
\node[result] (dim_hit) [below=1cm of final] {
  $\dim(\mathcal{A}^+\mathcal{P}_k)_d = \text{rank}(M)$
};

\node[draw=gray!60, fill=gray!5, text width=3cm, align=left, font=\sffamily\tiny,
      rounded corners=3pt, right=1.2cm of gen, yshift=-0.5cm] {
    \textbf{Legend:}\\[0.15cm]
    \textcolor{red!80}{\rule{0.8cm}{1.2pt}} Main Flow\\[0.1cm]
    \textcolor{blue!80}{\rule[0.2ex]{0.8cm}{0.4pt}} Data Flow (Dashed)\\[0.1cm]
    \textcolor{green!60!black}{\rule[0.2ex]{0.8cm}{0.4pt}} Dependencies (Dotted)\\[0.1cm]
    \textcolor{black}{\rule{0.8cm}{0.4pt}} Internal Steps
};


\draw[main_flow] (main) -- (init);
\draw[main_flow] (init) -- (gen);
\draw[main_flow] (gen) -- (decision);
\draw[main_flow] (hit_coll) -- (matrix);
\draw[main_flow] (matrix) -- (rank);
\draw[main_flow] (rank) -- (final);
\draw[main_flow] (final) -- (dim_hit);

\draw[internal_flow] (init.south) to[out=-135, in=90] (ring.north);
\draw[internal_flow] (init.south) to[out=-90, in=90] (basis.north);
\draw[internal_flow] (init.south) to[out=-45, in=90] (memo.north);
\draw[internal_flow] (init.south) to[out=-20, in=90] (iso.north);

\draw[dep_arrow] (ring.south) to[out=-45, in=135, looseness=0.8] (tasks.north);
\draw[dep_arrow] (basis.south) to[out=-45, in=100, looseness=0.8] (tasks.north);
\draw[dep_arrow] (memo.south) to[out=-90, in=45] (tasks.north);
\draw[internal_flow] (gen.south) to[bend right=20] (tasks.north);
\draw[internal_flow] (tasks.south) -- (decision.north);
\draw[internal_flow] (decision.west) -- node[above, font=\tiny] {Yes} (serial.north);
\draw[internal_flow] (decision.east) -- node[above, font=\tiny] {No} (parallel.north);

\draw[data_flow] (serial.south) to[out=-90, in=135] (hit_coll.north);
\draw[data_flow] (parallel.south) to[out=-90, in=45] (hit_coll.north);

\draw[data_flow] (matrix.west) to[out=180, in=90] (iterative.north);
\draw[internal_flow] (rank) -- (iterative);
\draw[internal_flow] (rank) -- (rank_calc);
\draw[data_flow] (iterative.south) to[out=-90, in=180, looseness=0.8] (rank_calc.west);
\draw[data_flow] (rank_calc.south) to[out=-90, in=90] (dim_hit.north);

\end{tikzpicture}
}
\end{center}

\medskip

\noindent\rule{\textwidth}{0.4pt}
\begin{center}
\textbf{Algorithm 2: Computing the dimension of the hit space (Hybrid Approach)}
\end{center}
\noindent\rule{\textwidth}{0.4pt}
\noindent\textbf{Input:} Degree $d$, number of variables $k$.\\
\textbf{Output:} The dimension of the hit space, $\dim((\mathcal{A}^+\mathcal{P}_k)_d)$.
\vspace{0.5em}

\noindent\textit{$\blacktriangleright$ Phase 1: Initialization}
\begin{enumerate}
    \item Define the polynomial ring $\mathcal{P}_k = \mathbb{F}_2[x_1, \dots, x_k]$.
    \item Generate the monomial basis $\mathcal{B}_d$ for $\mathcal{P}_k^d$. Let $N = |\mathcal{B}_d|$.
    \item Create the isomorphism $\phi: \mathcal{P}_k^d \to \mathbb{F}_2^N$.
\end{enumerate}

\noindent\textit{$\blacktriangleright$ Phase 2: Adaptive Generator Construction}
\begin{enumerate}
    \setcounter{enumi}{3}
    \item Initialize an empty list for tasks, $T$.
    \item Initialize an empty collection of hit vector data, $\mathcal{V}_H$.
    \item \textit{-- Gather all potential generator tasks --}
    \item \textbf{for all} Steenrod operators $Sq^{2^j}$ where $d-2^j \ge 0$ \textbf{do}
    \begin{enumerate}
        \item Let $d' = d-2^j$.
        \item \textbf{if} $\alpha(d'+k) \le k$ \textbf{then} \textit{$\blacktriangleright$ Apply theoretical filter}
        \begin{enumerate}
            \item \textbf{for all} monomials $g \in \mathcal{P}_k^{d'}$ \textbf{do}
            \item[] \quad Append task $(Sq^{2^j}, g)$ to $T$.
            \item \textbf{end for}
        \end{enumerate}
        \item \textbf{end if}
    \end{enumerate}
    \item \textbf{end for}
    \item \textit{-- Decide execution strategy based on the number of tasks --}
    \item \textbf{if} size of $T < \texttt{THRESHOLD}$ \textbf{then} \textit{$\blacktriangleright$ Execute in Serial Mode}
    \begin{enumerate}
        \item \textbf{for each} task $(Sq^k, g)$ in $T$ \textbf{do}
        \item[] \quad $h \gets Sq^k(g)$.
        \item[] \quad \textbf{if} $h \neq 0$ \textbf{then}
        \item[] \qquad Compute coordinate vector data $[h]_\text{data} = \phi(h)$.
        \item[] \qquad Add $[h]_\text{data}$ to the collection $\mathcal{V}_H$.
        \item[] \quad \textbf{end if}
        \item \textbf{end for}
    \end{enumerate}
    \item \textbf{else} \textit{$\blacktriangleright$ Execute in Parallel Mode}
    \begin{enumerate}
        \item Initialize a pool of parallel workers.
        \item Distribute tasks in $T$ among workers. Each worker computes $h=Sq^k(g)$ and returns its coordinate vector data $[h]_\text{data}$.
        \item $\mathcal{V}_H \gets$ Collect all non-null vector data from workers.
    \end{enumerate}
    \item \textbf{end if}
\end{enumerate}

\noindent\textit{$\blacktriangleright$ Phase 3: Sparse Matrix Construction and Rank Computation}
\begin{enumerate}
    \setcounter{enumi}{10}
    \item Construct a sparse matrix $M$ whose columns are formed from the vector data in $\mathcal{V}_H$.
    \item Compute the rank of $M$ using an efficient algorithm (e.g., Wiedemann).
    \item \textbf{return} $\text{rank}(M)$.
\end{enumerate}

\medskip

\noindent\rule{\textwidth}{0.4pt}
\begin{center}
\textbf{\textbf{Detailed SageMath code for the algorithm}}
\end{center}
\noindent\rule{\textwidth}{0.4pt}

\medskip

\begin{lstlisting}[language=Python, caption={\textbf{\textsc{SageMath} implementation of the hit space dimension}}, label={lst:SAGEMATH_code_2}]
from sage.all import *
from multiprocessing import Pool
import os
import time

def alpha(n):
    """Computes the sum of the bits in the binary expansion of an integer n."""
    return sum(int(bit) for bit in bin(n)[2:])

def get_sq_function(P, max_k):
    """Generate the Sq^k function using power series substitution."""
    sq_homs = {}
    sq_degrees_needed = [2**i for i in range((max_k + 1).nbits()) if 2**i <= max_k and 2**i > 0]
    for j in sq_degrees_needed:
        try:
            PS = PowerSeriesRing(P, 's', j + 1)
            s = PS.gen()
            sq_homs[j] = P.hom([x + s*x*x for x in P.gens()], codomain=PS)
        except Exception as e:
            pass
    def sq_final(k, f):
        if k == 0:
            return f
        if k not in sq_homs or f.is_zero() or k > f.degree():
            return P(0)
        coeffs = sq_homs[k](f).coefficients()
        return P(0) if k >= len(coeffs) else coeffs[k]
    return sq_final

# ---- Worker Functions ----
worker_P = None
worker_mono_map = None
worker_sq = None

def init_worker(ring, a_map, max_k):
    """Initializer function for each worker process."""
    global worker_P, worker_mono_map, worker_sq
    worker_P = ring
    worker_mono_map = a_map
    worker_sq = get_sq_function(worker_P, max_k)

def worker_task(task):
    """
    Performs one unit of work: computes Sq^k(g) and returns a list of 
    row indices for the corresponding matrix column.
    """
    k, g_exponents = task
    g = worker_P.monomial(*g_exponents)
    Sqg = worker_sq(k, g)
    if not Sqg.is_zero():
        row_indices = []
        for mt, c in Sqg.dict().items():
            m = worker_P.monomial(*mt)
            if m in worker_mono_map:
                row_indices.append(worker_mono_map[m])
        if row_indices:
            return row_indices
    return None



def calculate_hit_dimension_hybrid(degree, num_vars, PARALLEL_TASK_THRESHOLD=5000):
    """
    Calculates the dimension of the hit space using a hybrid (serial/parallel) method.
    """
    print(f"Starting calculation for degree {degree}, {num_vars} variables...")
    P = PolynomialRing(GF(2), num_vars, [f'x{i+1}' for i in range(num_vars)])
    monomials_d = list(P.monomials_of_degree(degree))
    if not monomials_d:
        print("No monomials of this degree.")
        return 0
    mono_map = {m: i for i, m in enumerate(monomials_d)}

    # --- Phase 1: Gather all tasks ---
    tasks = []
    sq_degrees = [2**i for i in range(degree.nbits()) if 2**i > 0]
    print("Preparing computation tasks...")
    for k in sq_degrees:
        deg_g = degree - k
        if deg_g < 0:
            continue
        if alpha(deg_g+ num_vars) > num_vars:
            continue
        for g in P.monomials_of_degree(deg_g):
            tasks.append((k, g.exponents()))
    total_tasks = len(tasks)
    print(f"Total tasks to process: {total_tasks}")

    # --- Phase 2: Select execution mode and compute ---
    valid_results = []
    if total_tasks < PARALLEL_TASK_THRESHOLD:
        print(f"Task count ({total_tasks}) is below threshold ({PARALLEL_TASK_THRESHOLD}). Running in SERIAL mode.")
        sq = get_sq_function(P, degree)
        for k, g_exponents in tasks:
            g = P.monomial(*g_exponents)
            Sqg = sq(k, g)
            if not Sqg.is_zero():
                row_indices = []
                for mt, c in Sqg.dict().items():
                    m = P.monomial(*mt)
                    if m in mono_map:
                        row_indices.append(mono_map[m])
                if row_indices:
                    valid_results.append(row_indices)
    else:
        num_workers = os.cpu_count()
        print(f"Task count ({total_tasks}) exceeds threshold. Running in PARALLEL mode with {num_workers} processes.")
        with Pool(initializer=init_worker, initargs=(P, mono_map, degree)) as pool:
            results = pool.map(worker_task, tasks)
        valid_results = [res for res in results if res is not None]

    if not valid_results:
        print("Hit space is zero-dimensional.")
        return 0

    # --- Phase 3: Construct sparse matrix and compute rank ---
    print("Constructing sparse matrix from results...")
    sparse_matrix = {}
    num_cols = len(valid_results)
    for col_index, row_indices in enumerate(valid_results):
        for row_idx in row_indices:
            sparse_matrix[(row_idx, col_index)] = 1
    num_rows = len(monomials_d)
    print(f"Constructed sparse matrix with {len(sparse_matrix)} non-zero entries (size {num_rows} x {num_cols})...")
    M = matrix(GF(2), num_rows, num_cols, sparse_matrix, sparse=True)
    print("Computing rank of the matrix...")
    rank = M.rank()
    print(f"Rank computation complete. Rank = {rank}")
    return rank
\end{lstlisting}

\subsection*{Algorithmic description of the sparse matrix technique}

The algorithm in LISTING \ref{lst:SAGEMATH_code_2} provides an optimized method for computing the dimension of the hit space by implementing Theorem \ref{dlc2}. Its efficiency for high degrees hinges on a \textit{dual strategy}: a sparse matrix representation to manage memory, combined with an \textit{adaptive hybrid computation model} to leverage modern multi-core processors. The process, grounded in the principles of linear algebra over $\F_2$, can be broken down into the following key steps:

\begin{itemize}

    \item \textbf{Motivation for sparsity and initial setup}: The number of rows and columns of the matrix $M$ grows polynomially with the degree $d$ and the number of variables $k$, quickly becoming too large to store in memory as a dense matrix. To circumvent this, the algorithm begins by establishing the computational context: defining the polynomial ring $\mathcal{P}_k = \F_2[x_1, \dots, x_k]$ and generating a basis of all monomials of the target degree $d$. A crucial step is creating a mapping, \texttt{mono\_map}, which assigns a unique integer index (a row index) to each monomial in this basis. This map serves as the bridge between abstract polynomials and concrete matrix row indices.

    \item \textbf{Adaptive generator computation and coordinate collection}: This is the core of the performance optimization. Instead of a single fixed computational path, the algorithm first compiles a complete list of all potential generator \textit{tasks}. This list is built using a highly effective \textit{two-level filtering strategy}:
    \begin{itemize}
        \item \textit{Operator Filtering:} First, the algorithm only considers Steenrod operators whose degrees are powers of two, $Sq^{2^j}$.
        \item \textit{Generator Degree Filtering:} Second, before considering the monomials $g$ of a potential degree $d' = d - 2^j$, the algorithm applies the theoretical filter checking if the condition $\alpha(d'+k) \le k$ is met. By skipping degrees $d'$ that do not satisfy this, a vast number of redundant computations are avoided.
    \end{itemize}
    With the full list of tasks prepared, the algorithm employs its core \textit{adaptive strategy}: it checks if the total number of tasks is below a certain \texttt{THRESHOLD}.
    \begin{itemize}
        \item If the problem is small, it runs in a fast \textit{serial mode}, executing each task sequentially. This avoids the overhead cost of parallelization where it is not beneficial.
        \item If the problem is large, it automatically switches to a \textit{parallel mode}, distributing the entire workload across all available CPU cores. Each worker process computes its assigned tasks independently, and the results are collected upon completion.
    \end{itemize}
    The outcome of either mode is a collection of coordinate data representing all non-zero hit vectors.

    \item \textbf{Sparse matrix representation}: After the collection of hit vector data is computed (either serially or in parallel), a dictionary, named \texttt{sparse\_matrix} in the code, is used to store the matrix in a sparse format. For each computed hit vector (which corresponds to a column, \texttt{col\_index}), the algorithm adds entries of the form \texttt{(row\_index, col\_index): 1} to the dictionary for each of its non-zero coordinates. This method ensures that only the locations of the '1's in the matrix are stored, dramatically reducing memory consumption.

    \item \textbf{Final construction and rank computation}: After populating the dictionary, it is passed to \textsc{SageMath}'s matrix constructor with the \texttt{sparse=True} flag to create an optimized sparse matrix object. Finally, the algorithm calls the \texttt{.rank()} method on this object. This function is highly optimized for large, sparse matrices over finite fields and efficiently computes the rank. By Theorem \ref{dlc2}, this rank is precisely the dimension of the hit space, $\dim((\mathcal{A}^+ \mathcal{P}_k)_d)$.
    
\end{itemize}

\medskip

\begin{example}\label{kcvd}

\begin{itemize}

\item[(i)] We consider the polynomial algebra $\mathcal P_5$ and the generic degree $d_s := 2^{s+3}  +2^{s+1} - 5$ for $s > 0.$ Recall that the arithmetic function \(\mu: \mathbb{N}^* \longrightarrow \mathbb{N}^*\) is defined by
\[
\mu(n) = \min\left\{ \ell : n = \sum_{j=1}^{\ell} (2^{u_j} - 1),\, u_j > 0, \, 1\leq j\leq \ell \right\} = {\rm min}\bigg\{\ell:\ \alpha(n + \ell)\leq \ell\bigg\},
\]
for all \(n \in \mathbb{N}^*\).
According to \cite{Kameko}, $(Q\mathcal P_k)_{d} \cong (Q\mathcal P_k)_{\frac{d-k}{2}}$ is an isomorphism of $\mathbb F_2$-vector spaces if and only if $\mu(d) = k.$ Hence, we can see that $\mu(d_s) = 5$ for any $s > 2.$ Hence, $(Q\mathcal P_5)_{d_s} \cong (Q\mathcal P_5)_{d_{2} = 35}$ for all $s\geq 2.$  Thus, we only need compute the dimension of $(Q\mathcal P_5)_{d_s}$ for $1\leq s\leq 2.$

\medskip

$\bullet$ For \(s = 1\), we have \(d_1 = 15\). To modify the above algorithm, we insert the following line of code immediately after ``\texttt{return M.rank()}'' in the final line:
  
\medskip

\begin{lstlisting}[language=Python]
# --- Main execution block ---
if __name__ == "__main__":
    # Change parameters k and d here for testing
    k = 5 
    d = 15
    
    start = time.time()
    rank_M = calculate_hit_dimension_hybrid(degree=d, num_vars=k)
    end = time.time()
    
    print("\n== CALCULATION COMPLETE ==")
    print(f"Case: k = {k}, d = {d}")
    print(f"Hit space dimension dim(A^+P_k) = rank(M) = {rank_M}")
    print(f"Execution time: {end - start:.2f} seconds")
\end{lstlisting}

\medskip

We will obtain the output of the algorithm as follows:
\[
\dim ( (\mathcal{A}^+ \mathcal{P}_5)_{d_1})  = \operatorname{rank}(M) = 3444.
\]
Consequently, by applying Corollary~\ref{hq2}, we obtain 
\[
\dim (Q\mathcal{P}_5)_{d_1} = \binom{39}{4} - 3444 = 432.
\]
This result confirms the earlier computation, carried out entirely by hand by Sum in~\cite{Sum1}; however, no algorithm was provided to verify the manual calculation.

\medskip

$\bullet$ For \(s = 2\), we have \(d_2 = 35\). By proceeding in a similar manner using the algorithm above, we obtain the following output:
\[
\dim ( (\mathcal{A}^+ \mathcal{P}_5)_{d_2})  = \operatorname{rank}(M) = 81134.
\]
So, by Corollary~\ref{hq2}, we get
\[
\dim (Q\mathcal{P}_5)_{d_2} = \binom{39}{4} - 81134 = 1117.
\]
This result confirms the earlier computation by N.H.V. Hung in~\cite{Hung}; however, he merely stated the dimension of  $(Q\mathcal{P}_5)_{d_2}$ without providing any algorithmic description or detailed justification. 

\medskip

Thus, computing \(\dim(Q\mathcal{P}_5)_{d_s}\) for general \(d_s\) using our algorithm in conjunction with Kameko's result~\cite{Kameko} proves to be both efficient and reliable.

\medskip

\item[(ii)] We now consider another illustration of the algorithm, this time applied to the polynomial algebra \(\mathcal{P}_5\) in a general degree \(d_s := 2^{s+2} - 4\), where \(s\) is any positive integer.  

\medskip

$\bullet$ As a consequence of prior theoretical results, it was shown in \cite{PS2} that
\[
\dim( Q\mathcal{P}_5)_{d_s} = (2^5 - 1) \cdot 21 = 651, \quad \text{for any } s \geq 5.
\]
Consequently, we only need to determine the dimension of \((Q\mathcal{P}_5)_{d_s}\) for \(1 \leq s \leq 4\), which can be effectively computed using our algorithm and Corollary~\ref{hq2}.

$\bullet$ For \(1 \leq s \leq 4\), the algorithm yields the following results:

\begin{table}[ht]
\centering
\begin{tabular}{|c|c|c|}
\hline
$s$ & $d_s = 2^{s+2} - 4$ & $\dim\left((\mathcal{A}^+ \mathcal{P}_5)_{d_s}\right) = \operatorname{rank}(M)$ \\
\hline
1 & 4  & $25$     \\
2 & 12 & $1630$   \\
3 & 28 & $35480$  \\
4 & 62 & $720070$ \\
\hline
\end{tabular}
\caption{Values of \(\dim((\mathcal{A}^+ \mathcal{P}_5)_{d_s})\) for \(1 \leq s \leq 4\)}
\label{tab:hit-dimension-space}
\end{table}

\medskip

\textbf{Note.} The reader can easily verify the algorithm's output for the cases \(s = 1\) and \(s = 2\) using the \textsc{SageMath} online interface available at \url{https://sagecell.sagemath.org/}. However, for \(3 \leq s \leq 4\), a personal computer with sufficiently large memory is required to run the algorithm, due to the significant computational complexity as the degree increases.

\medskip

Consequently, applying Corollary~\ref{hq2}, we get

\begin{table}[ht]
\centering
\begin{tabular}{|c|c|c|}
\hline
$s$ & $d_s = 2^{s+2} - 4$ & $\dim(Q\mathcal{P}_5)_{d_s} = \dim\left( \mathcal{P}_5^{d_s} \big/ (\mathcal{A}^+ \mathcal{P}_5)_{d_s} \right) = \binom{d_s+4}{4} - \operatorname{rank}(M)$ \\
\hline
1 & 4  & $\binom{d_1+4}{4} - 25 = 45$     \\
2 & 12 & $\binom{d_2+4}{4} - 1630 = 190$    \\
3 & 28 & $\binom{d_3+4}{4} - 35480= 480$    \\
4 & 62 & $\binom{d_4+4}{4} - 720070 = 650$    \\
\hline
\end{tabular}
\caption{Values of $\dim((Q\mathcal{P}_5)_{d_s})$ for $1 \leq s \leq 4$}
\label{tab:quotient-dimension}
\end{table}

The above results are consistent with our previous findings in~\cite{PS, PS2}, where the dimension of \((Q\mathcal{P}_5)_{d_s} \) was computed entirely by hand for $d = 2^{s+2} - 4,\, 1\leq s\leq 4.$ 

\end{itemize}

\noindent

\end{example}

\begin{example}\label{kcvd2}

We now consider the degree \(d_s := 2^s\) for \(1 \leq s \leq 7\) and the polynomial algebra \(\mathcal{P}_5\).  By applying our algorithm, we obtain the following results:

\begin{table}[ht]
\centering
\begin{tabular}{|c|c|c|}
\hline
$s$ & $d_s = 2^{s}$ & $\dim\left((\mathcal{A}^+ \mathcal{P}_5)_{d_s}\right)= \operatorname{rank}(M)$ \\
\hline
1 & 2  & $5$     \\
2 & 4 & $25$   \\
3 & 8 & $321$  \\
4 & 16 & $4402$ \\
5 & 32 & $57901$ \\
6 & 64 & $812695$ \\
7 & 128 & $12080800$ \\
\hline
\end{tabular}
\caption{Values of \(\dim((\mathcal{A}^+ \mathcal{P}_5)_{d_s})\) for \(1 \leq s \leq 7\)}
\label{tab:hit-dimension}
\end{table}

\medskip

\newpage
Consequently, applying Corollary~\ref{hq2}, we obtain

\begin{table}[ht]
\centering
\begin{tabular}{|c|c|c|}
\hline
$s$ & $d_s = 2^{s}$ & $\dim(Q\mathcal{P}_5)_{d_s} = \dim\left( \mathcal{P}_5^{d_s} \big/ (\mathcal{A}^+ \mathcal{P}_5)_{d_s} \right) = \binom{d_s+4}{4} - \operatorname{rank}(M)$ \\
\hline
1 & 2  & $\binom{d_1+4}{4} - 5 = 10$     \\
2 & 4 & $\binom{d_2+4}{4} - 25 = 45$    \\
3 & 8 & $\binom{d_3+4}{4} - 35480= 174$    \\
4 & 16 & $\binom{d_4+4}{4} - 4402 = 443$    \\
5 & 32 & $\binom{d_5+4}{4} - 57901 = 1004$    \\
6 & 64 & $\binom{d_6+4}{4} - 812695 = 1690$    \\
7 & 128 & $\binom{d_7+4}{4} - 12080800 = 1985$    \\
\hline
\end{tabular}
\caption{Values of $\dim((Q\mathcal{P}_5)_{d_s})$ for $1 \leq s \leq 7$}
\label{tab:quotient-dimension}
\end{table}

\medskip

The values of $\dim\bigl((Q\mathcal{P}_5)_{2^{s}}\bigr)$ for $1 \le s \le 4$ obtained above coincide with the earlier hand calculations in~\cite{Tin}. The $s = 5$ case also coincides with the hand calculations in our previous work \cite{Phuc}.
\medskip

\noindent

\begin{remark}\label{nxc}

For the case \(s = 6\), our algorithm shows that the fully manual computation mentioned in~\cite{Sum4} is \emph{not} correct: Sum and Tai claim  
\[
\dim((Q\mathcal{P}_5)_{2^{6}}) = 1694,
\]  
whereas our algorithmic result (see TABLE~\ref{tab:quotient-dimension}) is  
\[
\dim((Q\mathcal{P}_5)_{2^{6}}) =\binom{68}{4} - 812695 = 1690.
\]
This discrepancy highlights the essential role of explicit algorithms in validating hand computations, which are prone to errors.

\medskip

For \(s = 7\), Sum and Tai have provided the estimate \(1984 \leq \dim((Q\mathcal{P}_5)_{2^7}) \leq 1990\) in~\cite{Sum4}.  
As shown in TABLE~\ref{tab:quotient-dimension}, our algorithm yields the exact value
\[
\dim((Q\mathcal{P}_5)_{2^7}) = \binom{132}{4} - 12080800 = 1985,
\]
which lies within their estimated range. Importantly, the dimension obtained via our algorithm is both explicit and verifiable, surpassing the limitations of manual computation encountered in~\cite{Sum4}.
\end{remark}
\end{example}

\subsection{Analysis of computational complexity and practical limitations}\label{s33}

\emph{}

\medskip

While the algorithms presented in Theorems \ref{dlc1} and \ref{dlc2} provide a systematic method for solving the hit problem, their practical application is bounded by computational complexity. This analysis clarifies the sources of this inherent complexity, thereby motivating the optimized techniques used in LISTING \ref{dlc2}---\textit{specifically, the adaptive parallel model, the alpha-function filter, and the sparse matrix approach}---and examines the remaining computational costs.

\paragraph{Conceptual size of the matrix $M$}
The matrix $M$ remains the central computational object. Its conceptual dimensions dictate the theoretical scale of the problem and are the primary reason why optimized techniques are essential.

\begin{itemize}
    \item \textbf{Number of rows:} The number of rows of $M$ corresponds to the dimension of the vector space $\mathcal{P}_{k}^{d}$. This dimension is given by the ``stars'' and ``bars'' combinatorial formula:
    $$
        N_{\text{rows}} = \dim(\mathcal{P}_{k}^{d}) = \binom{d+k-1}{k-1}.
    $$
    This number grows polynomially with $d$ and $k$, defining the height of our problem space.

    \item \textbf{Number of columns:} The number of columns of $M$ is the size of the generating set $\mathcal{H}_{d}(\mathcal{P}_{k})$. With the alpha-function filter optimization, this set is significantly smaller than the full theoretical set. The algorithm only considers actions of $Sq^{2^j}$ on monomials $g$ of degree $d' = d-2^j$ that satisfy the condition $\alpha(d'+k) \le k$. The total number of columns is therefore the sum over only these \textit{admissible} degrees:
    $$
        N_{\text{cols}} = |\mathcal{H}'_{d}(\mathcal{P}_{k})| = \sum_{\substack{j=0 \\ \text{s.t. } \alpha(d-2^j + k) \le k}}^{\lfloor\log_2 d\rfloor} \dim(\mathcal{P}_{k}^{d-2^j}) = \sum_{\substack{j=0 \\ \text{s.t. } \alpha(d-2^j+k) \le k}}^{\lfloor\log_2 d\rfloor} \binom{d-2^j+k-1}{k-1}.
    $$
    This filtering drastically reduces the number of columns compared to a naive implementation, which is a key to the algorithm's feasibility.
\end{itemize}

\paragraph{Computational stages in a sparse implementation}
The use of a sparse matrix representation, combined with the adaptive parallel model, fundamentally changes how computational resources are consumed.

\begin{itemize}
    \item \textbf{Hybrid generator computation:} This phase sees the most significant performance gain from parallelization. Instead of a single-threaded process, the algorithm first gathers a list of all potential generator ``tasks''. For large-scale problems, this list of tasks is then distributed among all available CPU cores. If a machine has $N_{\text{cores}}$ processors, the wall-clock time for this stage is ideally reduced by a factor of up to $N_{\text{cores}}$. \textit{This effectively removes what was the primary time bottleneck in the serial version of the algorithm.} For small problems, the algorithm intelligently defaults to a fast serial loop to avoid the overhead of creating parallel processes.

    \item \textbf{Sparse matrix storage:} The critical optimization is in memory usage. Instead of allocating a dense array of size $N_{\text{rows}} \times N_{\text{cols}}$, the algorithm only stores the non-zero entries. The memory requirement is proportional to the number of non-zero entries (NNZ), i.e., $O(\text{NNZ})$. The alpha filter helps here as well, by reducing both $N_{\text{cols}}$ and, consequently, the total NNZ. For instance, in the verified case of $k=5$ and $d=35$, while $N_{\text{rows}}$ is 82251, the matrix is extremely sparse. However, the total number of non-zero entries can still be in the millions, thus the computations for $3\leq s \leq 4$ (corresponding to $28\leq d \leq 62$) in TABLE \ref{tab:hit-dimension-space} still required a computer with substantial memory.

    \item \textbf{Sparse rank computation:} With the generator computation phase now heavily accelerated, this stage often becomes the new primary bottleneck in terms of time. The algorithm calls SageMath's highly optimized \texttt{.rank()} method, which uses iterative algorithms like the Wiedemann or Lanczos methods for sparse matrices. The time complexity of these methods, often depending on factors like $O(N_{\text{rows}} \cdot \text{NNZ})$, means that even with all prior optimizations, this step's high-degree polynomial growth remains the main limiting factor for tackling even larger $d$ and $k$.
\end{itemize}

\section{Conclusions}

In this paper, we established a general, matrix-based criterion for determining whether a given homogeneous polynomial is ``hit'' within the context of the Peterson hit problem. Our central contribution is the reduction of this abstract topological question into a concrete problem of linear algebra: a polynomial $f$ is hit if and only if its coordinate vector $[f]$ lies in the column space of a matrix $M$ constructed from Steenrod operations. This translation to the solvability of the linear system $M\mathbf{c} = [f]$ over $\F_2$ creates a direct and robust bridge between pure theory and practical computation.

To leverage this foundation, we developed and implemented two novel \textsc{SageMath} algorithms, \textit{engineered with an adaptive parallel architecture}, designed for systematic and reproducible investigation far beyond the scope of previous manual calculations. These tools provide:
\newpage
\begin{itemize}
    \item An algorithm that, for any polynomial $f \in \mathcal{P}_{k}^{d}$, definitively decides if $f$ is hit and, if so, returns an explicit decomposition $f = \sum_{i>0}\mathrm{Sq}^{i}(g_i)$.
    \item An algorithm that \textit{rapidly} computes the exact dimension of the hit space $(\mathcal{A}^{+}\mathcal{P}_{k})_{d}$ and, consequently, the dimension of the quotient space $Q\mathcal{P}_k$ for any number of variables~$k$ and degree~$d$.
\end{itemize}

The immediate utility of this computational framework is demonstrated by its application to the challenging five-variable case, where our algorithms successfully:
\begin{enumerate}
    \item Verified and solidified several key results on $\dim(Q\mathcal{P}_5)_{d}$ that were previously supported only by extensive manual arguments.
    \item Uncovered and corrected a miscalculation in the recent work of Sum and Tai \cite{Sum4}, revising the dimension of $(Q\mathcal{P}_5)_{2^6}$ from 1694 to the correct value of 1690.
    \item Confirmed an estimate by Sum and Tai \cite{Sum4} for degree $d=2^7$ by computing the exact value, $\dim(Q\mathcal{P}_5)_{2^7} = 1985$, which falls within their predicted range.
\end{enumerate}

Compared with earlier approaches---which were often restricted to a small number of variables, confined to special degrees, and rarely accompanied by verifiable code---our framework is notably \emph{scalable}, \emph{transparent}, and \emph{reproducible}. The primary computational bottleneck, as analyzed in Section \ref{s33}, has shifted due to our parallel optimizations. While the initial generator computation is now highly accelerated, the primary remaining bottleneck is often the final sparse rank computation, which remains a serial process in most standard computer algebra systems. This observation naturally points toward future work focused on further optimizations, such as minimizing the generating set for the hit space or deploying specialized parallel libraries for linear algebra over finite fields.

Looking forward, the techniques developed here open the door to a new phase of algorithmic exploration of the hit problem for larger~$k$ and higher degrees. Beyond this specific problem, our methods may provide valuable tools for studying related structures, such as the Singer algebraic transfer (see, \cite{CH}, \cite{Hung}, \cite{HP}, \cite{Phuc0, Phuc, Phuc2}, \cite{Sum1}). We are confident that the combination of a clear linear-algebraic criterion and freely available \textsc{SageMath} code will serve as a solid foundation for significant subsequent advances in computational algebraic topology.

\section{Planned work for the next stage (Part II)}

In the second phase of our research project, we aim to \textit{explicitly determine an admissible basis} for the quotient space $(Q\mathcal{P}_k)_d$ for arbitrary values of $k$ and positive degrees $d$, by implementing a \textit{constructive algorithm in \textsc{SageMath}} based on the theoretical foundations and procedures established in this paper. Naturally, we emphasize that the algorithm is designed to be applicable for any $k$ and $d$, provided they lie within the memory and computational limits of the system executing it.

Building upon known results regarding the hit problem for the cases $k \leq 4$, as well as for many instances with $k \geq 5$, it is generally observed that the dimension of $(Q\mathcal{P}_k)_d$ stabilizes at a certain degree $d$, which depends on specific parameters (as illustrated in Examples~\ref{kcvd} and~\ref{kcvd2}). This phenomenon is expected, since the hit problem can often be reduced to degrees $d$ satisfying the condition $\mu(d) < k$.

Therefore, developing an effective algorithm that not only computes but also provides \textit{explicit information about an admissible basis} of $(Q\mathcal{P}_k)_d$ for specific values of $k$ and $d$ is of significant importance. It allows us to \textit{predict the dimension} in more general settings. Moreover, current approaches mostly rely on manual computation, which is both error-prone and inefficient. By contrast, the algorithmic approach we propose enables two key advantages: it offers a means to \textit{verify existing hand-calculations}, and it \textit{reduces the computational burden} when generalizing to arbitrary degrees $d$ for fixed $k$.

\medskip

Building on the symbolic criteria and algorithms established here, we outline the next phase of this project, which focuses on constructing explicit admissible monomial bases for \( Q\mathcal{P}_k \) via weight decomposition.

\medskip

$\bullet$ Let the dyadic expansion of a positive integer $d$ be given by $d = \sum_{j\geq 0}\alpha_j(d)2^j$, where $\alpha_j(d)= 0,\, 1.$ For each monomial $x = x_1^{a_1}x_2^{a_2}\ldots x_k^{a_k}$ in $\mathcal{P}_k,$ we define its \textit{weight vector} $\omega(x)$ as the sequence: $\omega(x) =(\omega_1(x), \omega_2(x), \ldots, \omega_j(x), \ldots),$ where $\omega_j(x) = \sum_{1\leq i\leq k}\alpha_{j-1}(a_i)$ for all $j \geq 1$.

\medskip

$\bullet$ For a weight vector $\omega = (\omega_1, \omega_2, \ldots, \omega_s,0,0,\ldots),$ we define $\deg \omega =\sum_{s\geq 1}2^{s-1}\omega_s.$ Denote by $\mathcal{P}_k(\omega)$ the subspace of $\mathcal{P}_k$ spanned by all monomials $u\in \mathcal{P}_k$ such that $\deg u = \deg \omega,\ \omega(u)\leq \omega,$ and by $\mathcal{P}_k^-(\omega)$ the subspace of $\mathcal{P}_k(\omega)$ spanned by all monomials $u$ such that $\omega(u)< \omega.$

\medskip

$\bullet$ Let $f, g$ two homogeneous polynomials of the same degree in $\mathcal{P}_k.$ 
\begin{enumerate}
\item [(i)]$f \equiv g $ if and only if $(f  + g)\in \mathcal{A}^+\mathcal{P}_k.$ In particular, if $f\equiv 0,$ then $f\in \mathcal{A}^+\mathcal{P}_k.$
\item[(ii)] $f \equiv_{\omega} g$ if and only if $(f  + g)\in \mathcal{A}^+\mathcal{P}_k + \mathcal{P}_k^-(\omega).$ 
\end{enumerate}

Both the relations "$\equiv$" and "$\equiv_{\omega}$" possess the characteristics of equivalence relations, as is readily observable.  Let $Q\mathcal{P}_k(\omega)$ denote the quotient of $\mathcal{P}_k(\omega)$ by the equivalence relation ``$\equiv_\omega$''. Then, we have 
$$ \dim (Q\mathcal{P}_k)_d = \sum_{\deg(\omega) = d}\dim Q\mathcal{P}_k(\omega).$$

$\bullet$  A monomial $x\in \mathcal {P}_k$ is said to be \textit{inadmissible} if there exist monomials $y_1, y_2,\ldots, y_m$ in $\mathcal {P}_k$ such that $y_j < x$ for all $1\leq j\leq m$ and $x \equiv\sum_{1\leq j\leq m}y_j.$ Then, a monomial is called \textit{admissible} if it is not inadmissible. 

In $\mathcal{P}_k,$ a monomial $x$ is defined as \textit{strictly inadmissible} if and only if there exist monomials $y_1, y_2,\ldots, y_m$ of the same degree as $x$, each satisfying $y_j < x$ for $1\leq j\leq m$, such that $x$ can be expressed as $x = \sum_{1\leq j\leq m}y_t + \sum_{1\leq \ell < 2^r }Sq^{\ell}(h_j)$, where $r = \max\{i\in\mathbb Z: \omega_i(x) > 0\}$ and $h_j$ are appropriate polynomials in $\mathcal{P}_k.$

\begin{remark}
By establishing an algorithm in \textsc{SageMath}, we can explicitly determine whether a given monomial in $\mathcal{P}_k$ is strictly inadmissible. For example, consider the case $k = 5$ and $d = 21$, and the monomial 
\[ 
u = x_1 x_2^{3} x_3^{6} x_4^{6} x_5^{5}, 
\] 
for which we have $\omega(u) = (3,3,3)$. Our algorithm yields the following output:

\begin{align*}
u &= x_1 x_2^3 x_3^5 x_4^6 x_5^6 + x_1 x_2^3 x_3^6 x_4^5 x_5^6 \\
&\quad + Sq^1\bigg(x_1 x_2^6 x_3^3 x_4^5 x_5^5 + x_1 x_2^6 x_3^5 x_4^3 x_5^5 + x_1 x_2^6 x_3^5 x_4^5 x_5^3 \\
&\qquad + x_1^3 x_2^3 x_3^3 x_4^5 x_5^6 + x_1^3 x_2^3 x_3^3 x_4^6 x_5^5 + x_1^3 x_2^3 x_3^4 x_4^5 x_5^5 \\
&\qquad + x_1^3 x_2^3 x_3^5 x_4^3 x_5^6 + x_1^3 x_2^3 x_3^5 x_4^4 x_5^5 + x_1^3 x_2^3 x_3^5 x_4^5 x_5^4 \\
&\qquad + x_1^3 x_2^3 x_3^5 x_4^6 x_5^3 + x_1^3 x_2^3 x_3^6 x_4^3 x_5^5 + x_1^3 x_2^3 x_3^6 x_4^5 x_5^3 \\
&\qquad + x_1^3 x_2^4 x_3^3 x_4^5 x_5^5 + x_1^3 x_2^4 x_3^5 x_4^3 x_5^5 + x_1^3 x_2^4 x_3^5 x_4^5 x_5^3 \bigg) \\
&\quad + Sq^2\bigg(x_1 x_2^3 x_3^3 x_4^6 x_5^6 + x_1 x_2^3 x_3^5 x_4^5 x_5^5 + x_1 x_2^3 x_3^6 x_4^3 x_5^6 \\
&\qquad + x_1 x_2^3 x_3^6 x_4^6 x_5^3 + x_1^2 x_2^3 x_3^3 x_4^5 x_5^6 + x_1^2 x_2^3 x_3^3 x_4^6 x_5^5 \\
&\qquad + x_1^2 x_2^3 x_3^5 x_4^3 x_5^6 + x_1^2 x_2^3 x_3^5 x_4^6 x_5^3 + x_1^2 x_2^3 x_3^6 x_4^3 x_5^5 \\
&\qquad + x_1^2 x_2^3 x_3^6 x_4^5 x_5^3 \bigg) + Sq^8\bigg(x_1 x_2^3 x_3^3 x_4^3 x_5^3\bigg)  \pmod{\mathcal{P}_5^-(\omega(u))}.
\end{align*}

This equality implies that $u$ is strictly inadmissible. 

Note that the above example confirms the previous result obtained in our earlier work~\cite{Phuc0} through manual computation, although at that time we made a computational error in our attempt to prove its strict inadmissibility. In addition, the author of~\cite{Sum3} also referred to this monomial, but was \textbf{unable to prove} that it is strictly inadmissible. 

\medskip

For these reasons, the development of computer-implementable algorithms proves to be highly meaningful and effective in validating results that were previously obtained through manual computation.
\end{remark}
\medskip

Thus, \( (Q\mathcal{P}_k)_d \) has a basis consisting of all classes represented by admissible monomials of degree \( d \) in \( \mathcal{P}_k \).

\begin{theorem}[see \cite{Kameko}, \cite{Sum1}]\label{dlKS}
Let \( x, y, w \) be monomials in \( \mathcal{P}_k \) such that \( \omega_i(x) = 0 \) for all \( i > r > 0 \), \( \omega_s(w) \neq 0 \), and \( \omega_i(w) = 0 \) for all \( i > s > 0 \). Then the following statements hold:
\begin{enumerate}
    \item[(I)] If \( w \) is inadmissible, then the monomial \( x w^{2^r} \) is also inadmissible.
    \item[(II)] If \( w \) is strictly inadmissible, then the monomial \( x w^{2^r} y^{2^{r+s}} \) is inadmissible.
\end{enumerate}
\end{theorem}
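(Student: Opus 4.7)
The plan is to derive both parts from one common mechanism: the characteristic-two Frobenius identity $(a+b)^{2^r} = a^{2^r} + b^{2^r}$ combined with a bookkeeping argument based on the Cartan formula. The crucial structural input is that ``$\omega_i(x) = 0$ for all $i > r$'' forces every exponent of $x$ to be strictly less than $2^r$, so in any product $xw^{2^r}$ (or $xy^{2^{r+s}}w^{2^r}$) the binary expansions of the exponents contributed by the different factors occupy disjoint bit-positions. Consequently no carries occur, the weight vectors concatenate, and the Cartan expansion of $\Sq^n$ applied to such a product collapses in a controlled way.

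For part~(I), inadmissibility of $w$ furnishes an equality $w = \sum_j w_j + H$ with $w_j < w$ monomials of the same degree and $H \in (\mathcal{A}^+\mathcal{P}_k)_{\deg w}$. First, I would apply Frobenius and multiply by $x$ to obtain $xw^{2^r} = \sum_j xw_j^{2^r} + xH^{2^r}$; the bit-disjointness observation gives $xw_j^{2^r} < xw^{2^r}$ in the relevant monomial order. It then remains to show $xH^{2^r} \in (\mathcal{A}^+\mathcal{P}_k)$. Writing $H = \sum_i \Sq^{q_i}(f_i)$ with $q_i > 0$ and using that $\Sq^b(f_i^{2^r}) = 0$ unless $2^r \mid b$, the Cartan formula yields
\[
\Sq^{2^r q_i}\bigl(x\, f_i^{2^r}\bigr) = x\bigl(\Sq^{q_i}(f_i)\bigr)^{2^r} + \sum_{0 \le c < q_i} \Sq^{2^r(q_i-c)}(x)\cdot\bigl(\Sq^c(f_i)\bigr)^{2^r}.
\]
Solving this for $x\bigl(\Sq^{q_i}(f_i)\bigr)^{2^r}$ and inducting downward on $q_i$ displays $xH^{2^r}$ as a sum of hit terms, completing part~(I).

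For part~(II), strict inadmissibility of $w$ provides a representation $w \equiv \sum_\ell w_\ell + \sum_{0 < j < 2^s} \Sq^j(h_j) \pmod{\mathcal{P}_k^-(\omega(w))}$ with $w_\ell < w$. After the same Frobenius step and multiplication by $xy^{2^{r+s}}$, the nontrivial corrections become $xy^{2^{r+s}}\Sq^{2^r j}(h_j^{2^r})$ for $0 < j < 2^s$. The Cartan expansion $\Sq^{2^r j}(xy^{2^{r+s}}h_j^{2^r}) = \sum_{a+b+c = 2^r j}\Sq^a(x)\,\Sq^b(y^{2^{r+s}})\,(\Sq^c(h_j))^{2^r}$ has $\Sq^b(y^{2^{r+s}}) = 0$ unless $2^{r+s} \mid b$; but the bound $j < 2^s$ gives $b \le 2^r j < 2^{r+s}$, so the only surviving value is $b = 0$. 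This is precisely why the auxiliary factor $y^{2^{r+s}}$ appears in the hypothesis: it neutralises all Cartan action outside $x$ and $h_j^{2^r}$, reducing the analysis to the same pattern as in part~(I) applied term by term. The terms coming from $\mathcal{P}_k^-(\omega(w))$ become $xy^{2^{r+s}}\cdot(\text{lower-weight})^{2^r}$, which lie strictly below $xy^{2^{r+s}}w^{2^r}$ and supply the ``smaller monomial'' part of the inadmissible decomposition.

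The main obstacle will be the monomial bookkeeping in part~(II): one must verify that every surviving Cartan error term $\Sq^{2^r(j-c)}(x)\cdot y^{2^{r+s}}\cdot(\Sq^c(h_j))^{2^r}$ expands into monomials that are either hit or strictly smaller than $xy^{2^{r+s}}w^{2^r}$ in the admissibility order. The exponent bound $|x| < k\cdot 2^r$ restricts how many such error terms can be nonzero, but matching them against the left-lexicographic order on weight vectors requires care. My plan is to isolate this as an auxiliary ``bit-disjoint order lemma'' asserting that multiplication by a factor with disjoint bit-support intertwines with the admissibility order on $\mathcal{P}_k$, and then plug this lemma into the Cartan analysis above to close an induction on the dyadic length of $w$.
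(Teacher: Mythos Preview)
The paper does not supply its own proof of this statement; it is quoted as a known result from Kameko's thesis and Sum's work and is used only as a tool in the algorithmic section, so there is no in-paper argument to compare against. Your outline is the standard one from those references: Frobenius in characteristic two, the Cartan formula, and the bit-disjointness of exponents forced by $\omega_i(x)=0$ for $i>r$. The identification in Part~(II) of why the factor $y^{2^{r+s}}$ kills every Cartan summand with $b>0$ (because $b\le 2^r j<2^{r+s}$) is exactly the mechanism used in the literature.

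There is, however, a genuine gap at the step ``inducting downward on $q_i$'' in Part~(I). From your displayed identity one gets
\[
x\bigl(\Sq^{q}(f)\bigr)^{2^r}=\Sq^{2^r q}\bigl(x\,f^{2^r}\bigr)+\sum_{0\le c<q}\Sq^{2^r(q-c)}(x)\cdot\bigl(\Sq^{c}(f)\bigr)^{2^r},
\]
and the first term is hit. But in the remaining sum the left factor has changed from $x$ to $\Sq^{2^r(q-c)}(x)$, whose monomials may carry exponents $\ge 2^r$ (for instance $\Sq^{2}(x_1x_2x_3)=x_1^2x_2^2x_3+\cdots$ already at $r=1$), so the hypothesis $\omega_i(\cdot)=0$ for $i>r$ is lost and the induction hypothesis no longer applies. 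Nor can you simply strengthen the hypothesis to arbitrary prefactors: for $k=1$, $r=1$, $u=x_1^3$, $f=x_1$, $q=1$ one has $u\,(\Sq^1 f)^{2}=x_1^{7}$, which is \emph{not} hit, so the statement is false once the exponent constraint on the left factor is dropped. In particular the $c=0$ term $\Sq^{2^r q}(x)\cdot f^{2^r}$ survives every iteration and is not visibly hit. The same issue recurs verbatim in your Part~(II) analysis, where the residual errors $\Sq^{2^r(j-c)}(x)\cdot y^{2^{r+s}}\cdot(\Sq^{c}h_j)^{2^r}$ have exactly this shape; your proposed ``bit-disjoint order lemma'' cannot repair it, because the monomials of $\Sq^{2^r(j-c)}(x)$ intrude into the bit-range $[2^r,2^{r+1})$ already occupied by $(\Sq^{c}h_j)^{2^r}$, so disjointness fails precisely at the relevant place. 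Closing this gap is the real content of the lemma; the proofs in the cited references organise the induction differently (typically reducing on $r$ rather than on the Steenrod exponent) and require a separate argument for the base case.
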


$\bullet$ We consider the following $\mathcal{A}$-submodules of $\mathcal P_k:$
$$\begin{array}{ll}
\medskip
\mathcal {P}_k^0 &=\big\langle \big\{x_1^{a_1}x_2^{a_2}\ldots x_k^{a_k}\in \mathcal {P}_k\;|\;a_1a_2\ldots a_k = 0\big\}\big\rangle, \\
\mathcal {P}_k^+&= \big\langle \big\{x_1^{a_1}x_2^{a_2}\ldots x_k^{a_k}\in \mathcal {P}_k\;|\; a_1a_2\ldots a_k > 0\big\}\big\rangle.
\end{array}$$
Consequently, $$(Q\mathcal {P}_k)_d \cong (Q\mathcal {P}_k^0)_{d}\bigoplus (Q\mathcal {P}_k^+)_{d}.$$

\medskip

Based on the above representations and the algorithms developed in this paper, together with the use of the Gaussian elimination process applied to the augmented matrix \( [M \,|\, I] \), we will construct an algorithm in \textsc{SageMath} that enables the explicit determination of a basis for \( Q\mathcal{P}_k(\omega) \), and hence, the corresponding basis for \( (Q\mathcal{P}_k)_d \). For instance, when \( k = 5 \) and \( d = 14 \), our algorithm produces the following explicit output:

\medskip

\begin{lstlisting}
Sorting 3060 monomials in omega-sigma order...
Building hit space matrix M...
Reducing augmented matrix to row echelon form...
Extracting admissible basis...

============================================================================
                 COMPUTATION RESULTS
============================================================================
Case: k = 5, d = 14

Dimension Formula:
\begin{align*}
    \dim (Q\mathcal{P}_{5})_{14} &= \dim Q\mathcal{P}_{5}(\omega=(2, 2, 2)) + \dim Q\mathcal{P}_{5}(\omega=(2, 4, 1)) + \\ 
    &\quad \dim Q\mathcal{P}_{5}(\omega=(4, 3, 1))
\end{align*}

Detailed Dimensions per Weight Vector:
For \omega = (2, 2, 2):
  \dim [Q\mathcal{P}_{5}(\omega)]^0 = 115
  \dim [Q\mathcal{P}_{5}(\omega)]^+ = 15
For \omega = (2, 4, 1):
  \dim [Q\mathcal{P}_{5}(\omega)]^0 = 0
  \dim [Q\mathcal{P}_{5}(\omega)]^+ = 15
For \omega = (4, 3, 1):
  \dim [Q\mathcal{P}_{5}(\omega)]^0 = 75
  \dim [Q\mathcal{P}_{5}(\omega)]^+ = 100




----------------------------------------------------------------------------
          ADMISSIBLE BASIS BY WEIGHT VECTOR
----------------------------------------------------------------------------

* The space for weight vector \omega = (2, 2, 2) (Total: 130 basis elements):
  - Subgroup 1: Monomials with all positive exponents (15 elements)
    1. x1*x2*x3^2*x4^4*x5^6                 2. x1*x2*x3^2*x4^6*x5^4                 
    3. x1*x2*x3^6*x4^2*x5^4                 4. x1*x2^2*x3*x4^4*x5^6                 
    5. x1*x2^2*x3*x4^6*x5^4                 6. x1*x2^2*x3^3*x4^4*x5^4               
    7. x1*x2^2*x3^4*x4*x5^6                 8. x1*x2^2*x3^4*x4^3*x5^4               
    9. x1*x2^2*x3^5*x4^2*x5^4               10. x1*x2^3*x3^2*x4^4*x5^4              
    11. x1*x2^3*x3^4*x4^2*x5^4              12. x1*x2^6*x3*x4^2*x5^4                
    13. x1^3*x2*x3^2*x4^4*x5^4              14. x1^3*x2*x3^4*x4^2*x5^4              
    15. x1^3*x2^4*x3*x4^2*x5^4              
  - Subgroup 2: Monomials containing zero exponents (115 elements)
    16. x1*x2*x3^6*x4^6                     17. x1*x2*x3^6*x5^6                     
    18. x1*x2*x4^6*x5^6                     19. x1*x2^2*x3^4*x4^7                   
    20. x1*x2^2*x3^4*x5^7                   21. x1*x2^2*x3^5*x4^6                   
    22. x1*x2^2*x3^5*x5^6                   23. x1*x2^2*x3^7*x4^4                   
    24. x1*x2^2*x3^7*x5^4                   25. x1*x2^2*x4^4*x5^7                   
    26. x1*x2^2*x4^5*x5^6                   27. x1*x2^2*x4^7*x5^4                   
    28. x1*x2^3*x3^4*x4^6                   29. x1*x2^3*x3^4*x5^6                   
    30. x1*x2^3*x3^6*x4^4                   31. x1*x2^3*x3^6*x5^4                   
    32. x1*x2^3*x4^4*x5^6                   33. x1*x2^3*x4^6*x5^4                   
    34. x1*x2^6*x3*x4^6                     35. x1*x2^6*x3*x5^6                     
    36. x1*x2^6*x3^7                        37. x1*x2^6*x4*x5^6                     
    38. x1*x2^6*x4^7                        39. x1*x2^6*x5^7                        
    40. x1*x2^7*x3^2*x4^4                   41. x1*x2^7*x3^2*x5^4                   
    42. x1*x2^7*x3^6                        43. x1*x2^7*x4^2*x5^4                   
    44. x1*x2^7*x4^6                        45. x1*x2^7*x5^6                        
    46. x1*x3*x4^6*x5^6                     47. x1*x3^2*x4^4*x5^7                   
    48. x1*x3^2*x4^5*x5^6                   49. x1*x3^2*x4^7*x5^4                   
    50. x1*x3^3*x4^4*x5^6                   51. x1*x3^3*x4^6*x5^4                   
    52. x1*x3^6*x4*x5^6                     53. x1*x3^6*x4^7                        
    54. x1*x3^6*x5^7                        55. x1*x3^7*x4^2*x5^4                   
    56. x1*x3^7*x4^6                        57. x1*x3^7*x5^6                        
    58. x1*x4^6*x5^7                        59. x1*x4^7*x5^6                        
    60. x1^3*x2*x3^4*x4^6                   61. x1^3*x2*x3^4*x5^6                   
    62. x1^3*x2*x3^6*x4^4                   63. x1^3*x2*x3^6*x5^4                   
    64. x1^3*x2*x4^4*x5^6                   65. x1^3*x2*x4^6*x5^4                   
    66. x1^3*x2^3*x3^4*x4^4                 67. x1^3*x2^3*x3^4*x5^4                 
    68. x1^3*x2^3*x4^4*x5^4                 69. x1^3*x2^5*x3^2*x4^4                 
    70. x1^3*x2^5*x3^2*x5^4                 71. x1^3*x2^5*x3^6                      
    72. x1^3*x2^5*x4^2*x5^4                 73. x1^3*x2^5*x4^6                      
    74. x1^3*x2^5*x5^6                      75. x1^3*x3*x4^4*x5^6                   
    76. x1^3*x3*x4^6*x5^4                   77. x1^3*x3^3*x4^4*x5^4                 
    78. x1^3*x3^5*x4^2*x5^4                 79. x1^3*x3^5*x4^6                      
    80. x1^3*x3^5*x5^6                      81. x1^3*x4^5*x5^6                      
    82. x1^7*x2*x3^2*x4^4                   83. x1^7*x2*x3^2*x5^4                   
    84. x1^7*x2*x3^6                        85. x1^7*x2*x4^2*x5^4                   
    86. x1^7*x2*x4^6                        87. x1^7*x2*x5^6                        
    88. x1^7*x2^7                           89. x1^7*x3*x4^2*x5^4                   
    90. x1^7*x3*x4^6                        91. x1^7*x3*x5^6                        
    92. x1^7*x3^7                           93. x1^7*x4*x5^6                        
    94. x1^7*x4^7                           95. x1^7*x5^7                           
    96. x2*x3*x4^6*x5^6                     97. x2*x3^2*x4^4*x5^7                   
    98. x2*x3^2*x4^5*x5^6                   99. x2*x3^2*x4^7*x5^4                   
    100. x2*x3^3*x4^4*x5^6                  101. x2*x3^3*x4^6*x5^4                  
    102. x2*x3^6*x4*x5^6                    103. x2*x3^6*x4^7                       
    104. x2*x3^6*x5^7                       105. x2*x3^7*x4^2*x5^4                  
    106. x2*x3^7*x4^6                       107. x2*x3^7*x5^6                       
    108. x2*x4^6*x5^7                       109. x2*x4^7*x5^6                       
    110. x2^3*x3*x4^4*x5^6                  111. x2^3*x3*x4^6*x5^4                  
    112. x2^3*x3^3*x4^4*x5^4                113. x2^3*x3^5*x4^2*x5^4                
    114. x2^3*x3^5*x4^6                     115. x2^3*x3^5*x5^6                     
    116. x2^3*x4^5*x5^6                     117. x2^7*x3*x4^2*x5^4                  
    118. x2^7*x3*x4^6                       119. x2^7*x3*x5^6                       
    120. x2^7*x3^7                          121. x2^7*x4*x5^6                       
    122. x2^7*x4^7                          123. x2^7*x5^7                          
    124. x3*x4^6*x5^7                       125. x3*x4^7*x5^6                       
    126. x3^3*x4^5*x5^6                     127. x3^7*x4*x5^6                       
    128. x3^7*x4^7                          129. x3^7*x5^7                          
    130. x4^7*x5^7                          

* The space for weight vector \omega = (2, 4, 1) (Total: 15 basis elements):
  - Subgroup 1: Monomials with all positive exponents (15 elements)
    131. x1*x2^2*x3^2*x4^2*x5^7             132. x1*x2^2*x3^2*x4^3*x5^6             
    133. x1*x2^2*x3^2*x4^7*x5^2             134. x1*x2^2*x3^3*x4^2*x5^6             
    135. x1*x2^2*x3^3*x4^6*x5^2             136. x1*x2^2*x3^7*x4^2*x5^2             
    137. x1*x2^3*x3^2*x4^2*x5^6             138. x1*x2^3*x3^2*x4^6*x5^2             
    139. x1*x2^3*x3^6*x4^2*x5^2             140. x1*x2^7*x3^2*x4^2*x5^2             
    141. x1^3*x2*x3^2*x4^2*x5^6             142. x1^3*x2*x3^2*x4^6*x5^2             
    143. x1^3*x2*x3^6*x4^2*x5^2             144. x1^3*x2^5*x3^2*x4^2*x5^2           
    145. x1^7*x2*x3^2*x4^2*x5^2             

* The space for weight vector \omega = (4, 3, 1) (Total: 175 basis elements):
  - Subgroup 1: Monomials with all positive exponents (100 elements)
    146. x1*x2*x3^2*x4^3*x5^7               147. x1*x2*x3^2*x4^7*x5^3               
    148. x1*x2*x3^3*x4^2*x5^7               149. x1*x2*x3^3*x4^3*x5^6               
    150. x1*x2*x3^3*x4^6*x5^3               151. x1*x2*x3^3*x4^7*x5^2               
    152. x1*x2*x3^6*x4^3*x5^3               153. x1*x2*x3^7*x4^2*x5^3               
    154. x1*x2*x3^7*x4^3*x5^2               155. x1*x2^2*x3*x4^3*x5^7               
    156. x1*x2^2*x3*x4^7*x5^3               157. x1*x2^2*x3^3*x4*x5^7               
    158. x1*x2^2*x3^3*x4^3*x5^5             159. x1*x2^2*x3^3*x4^5*x5^3             
    160. x1*x2^2*x3^3*x4^7*x5               161. x1*x2^2*x3^5*x4^3*x5^3             
    162. x1*x2^2*x3^7*x4*x5^3               163. x1*x2^2*x3^7*x4^3*x5               
    164. x1*x2^3*x3*x4^2*x5^7               165. x1*x2^3*x3*x4^3*x5^6               
    166. x1*x2^3*x3*x4^6*x5^3               167. x1*x2^3*x3*x4^7*x5^2               
    168. x1*x2^3*x3^2*x4*x5^7               169. x1*x2^3*x3^2*x4^3*x5^5             
    170. x1*x2^3*x3^2*x4^5*x5^3             171. x1*x2^3*x3^2*x4^7*x5               
    172. x1*x2^3*x3^3*x4*x5^6               173. x1*x2^3*x3^3*x4^2*x5^5             
    174. x1*x2^3*x3^3*x4^3*x5^4             175. x1*x2^3*x3^3*x4^4*x5^3             
    176. x1*x2^3*x3^3*x4^5*x5^2             177. x1*x2^3*x3^3*x4^6*x5               
    178. x1*x2^3*x3^4*x4^3*x5^3             179. x1*x2^3*x3^5*x4^2*x5^3             
    180. x1*x2^3*x3^5*x4^3*x5^2             181. x1*x2^3*x3^6*x4*x5^3               
    182. x1*x2^3*x3^6*x4^3*x5               183. x1*x2^3*x3^7*x4*x5^2               
    184. x1*x2^3*x3^7*x4^2*x5               185. x1*x2^6*x3*x4^3*x5^3               
    186. x1*x2^6*x3^3*x4*x5^3               187. x1*x2^6*x3^3*x4^3*x5               
    188. x1*x2^7*x3*x4^2*x5^3               189. x1*x2^7*x3*x4^3*x5^2               
    190. x1*x2^7*x3^2*x4*x5^3               191. x1*x2^7*x3^2*x4^3*x5               
    192. x1*x2^7*x3^3*x4*x5^2               193. x1*x2^7*x3^3*x4^2*x5               
    194. x1^3*x2*x3*x4^2*x5^7               195. x1^3*x2*x3*x4^3*x5^6               
    196. x1^3*x2*x3*x4^6*x5^3               197. x1^3*x2*x3*x4^7*x5^2               
    198. x1^3*x2*x3^2*x4*x5^7               199. x1^3*x2*x3^2*x4^3*x5^5             
    200. x1^3*x2*x3^2*x4^5*x5^3             201. x1^3*x2*x3^2*x4^7*x5               
    202. x1^3*x2*x3^3*x4*x5^6               203. x1^3*x2*x3^3*x4^2*x5^5             
    204. x1^3*x2*x3^3*x4^3*x5^4             205. x1^3*x2*x3^3*x4^4*x5^3             
    206. x1^3*x2*x3^3*x4^5*x5^2             207. x1^3*x2*x3^3*x4^6*x5               
    208. x1^3*x2*x3^4*x4^3*x5^3             209. x1^3*x2*x3^5*x4^2*x5^3             
    210. x1^3*x2*x3^5*x4^3*x5^2             211. x1^3*x2*x3^6*x4*x5^3               
    212. x1^3*x2*x3^6*x4^3*x5               213. x1^3*x2*x3^7*x4*x5^2               
    214. x1^3*x2*x3^7*x4^2*x5               215. x1^3*x2^3*x3*x4*x5^6               
    216. x1^3*x2^3*x3*x4^2*x5^5             217. x1^3*x2^3*x3*x4^3*x5^4             
    218. x1^3*x2^3*x3*x4^4*x5^3             219. x1^3*x2^3*x3*x4^5*x5^2             
    220. x1^3*x2^3*x3*x4^6*x5               221. x1^3*x2^3*x3^3*x4*x5^4             
    222. x1^3*x2^3*x3^3*x4^4*x5             223. x1^3*x2^3*x3^4*x4*x5^3             
    224. x1^3*x2^3*x3^4*x4^3*x5             225. x1^3*x2^3*x3^5*x4*x5^2             
    226. x1^3*x2^3*x3^5*x4^2*x5             227. x1^3*x2^4*x3*x4^3*x5^3             
    228. x1^3*x2^4*x3^3*x4*x5^3             229. x1^3*x2^4*x3^3*x4^3*x5             
    230. x1^3*x2^5*x3*x4^2*x5^3             231. x1^3*x2^5*x3*x4^3*x5^2             
    232. x1^3*x2^5*x3^2*x4*x5^3             233. x1^3*x2^5*x3^2*x4^3*x5             
    234. x1^3*x2^5*x3^3*x4*x5^2             235. x1^3*x2^5*x3^3*x4^2*x5             
    236. x1^3*x2^7*x3*x4*x5^2               237. x1^3*x2^7*x3*x4^2*x5               
    238. x1^7*x2*x3*x4^2*x5^3               239. x1^7*x2*x3*x4^3*x5^2               
    240. x1^7*x2*x3^2*x4*x5^3               241. x1^7*x2*x3^2*x4^3*x5               
    242. x1^7*x2*x3^3*x4*x5^2               243. x1^7*x2*x3^3*x4^2*x5               
    244. x1^7*x2^3*x3*x4*x5^2               245. x1^7*x2^3*x3*x4^2*x5               
  - Subgroup 2: Monomials containing zero exponents (75 elements)
    246. x1*x2^3*x3^3*x4^7                  247. x1*x2^3*x3^3*x5^7                  
    248. x1*x2^3*x3^7*x4^3                  249. x1*x2^3*x3^7*x5^3                  
    250. x1*x2^3*x4^3*x5^7                  251. x1*x2^3*x4^7*x5^3                  
    252. x1*x2^7*x3^3*x4^3                  253. x1*x2^7*x3^3*x5^3                  
    254. x1*x2^7*x4^3*x5^3                  255. x1*x3^3*x4^3*x5^7                  
    256. x1*x3^3*x4^7*x5^3                  257. x1*x3^7*x4^3*x5^3                  
    258. x1^3*x2*x3^3*x4^7                  259. x1^3*x2*x3^3*x5^7                  
    260. x1^3*x2*x3^7*x4^3                  261. x1^3*x2*x3^7*x5^3                  
    262. x1^3*x2*x4^3*x5^7                  263. x1^3*x2*x4^7*x5^3                  
    264. x1^3*x2^3*x3*x4^7                  265. x1^3*x2^3*x3*x5^7                  
    266. x1^3*x2^3*x3^3*x4^5                267. x1^3*x2^3*x3^3*x5^5                
    268. x1^3*x2^3*x3^5*x4^3                269. x1^3*x2^3*x3^5*x5^3                
    270. x1^3*x2^3*x3^7*x4                  271. x1^3*x2^3*x3^7*x5                  
    272. x1^3*x2^3*x4*x5^7                  273. x1^3*x2^3*x4^3*x5^5                
    274. x1^3*x2^3*x4^5*x5^3                275. x1^3*x2^3*x4^7*x5                  
    276. x1^3*x2^5*x3^3*x4^3                277. x1^3*x2^5*x3^3*x5^3                
    278. x1^3*x2^5*x4^3*x5^3                279. x1^3*x2^7*x3*x4^3                  
    280. x1^3*x2^7*x3*x5^3                  281. x1^3*x2^7*x3^3*x4                  
    282. x1^3*x2^7*x3^3*x5                  283. x1^3*x2^7*x4*x5^3                  
    284. x1^3*x2^7*x4^3*x5                  285. x1^3*x3*x4^3*x5^7                  
    286. x1^3*x3*x4^7*x5^3                  287. x1^3*x3^3*x4*x5^7                  
    288. x1^3*x3^3*x4^3*x5^5                289. x1^3*x3^3*x4^5*x5^3                
    290. x1^3*x3^3*x4^7*x5                  291. x1^3*x3^5*x4^3*x5^3                
    292. x1^3*x3^7*x4*x5^3                  293. x1^3*x3^7*x4^3*x5                  
    294. x1^7*x2*x3^3*x4^3                  295. x1^7*x2*x3^3*x5^3                  
    296. x1^7*x2*x4^3*x5^3                  297. x1^7*x2^3*x3*x4^3                  
    298. x1^7*x2^3*x3*x5^3                  299. x1^7*x2^3*x3^3*x4                  
    300. x1^7*x2^3*x3^3*x5                  301. x1^7*x2^3*x4*x5^3                  
    302. x1^7*x2^3*x4^3*x5                  303. x1^7*x3*x4^3*x5^3                  
    304. x1^7*x3^3*x4*x5^3                  305. x1^7*x3^3*x4^3*x5                  
    306. x2*x3^3*x4^3*x5^7                  307. x2*x3^3*x4^7*x5^3                  
    308. x2*x3^7*x4^3*x5^3                  309. x2^3*x3*x4^3*x5^7                  
    310. x2^3*x3*x4^7*x5^3                  311. x2^3*x3^3*x4*x5^7                  
    312. x2^3*x3^3*x4^3*x5^5                313. x2^3*x3^3*x4^5*x5^3                
    314. x2^3*x3^3*x4^7*x5                  315. x2^3*x3^5*x4^3*x5^3                
    316. x2^3*x3^7*x4*x5^3                  317. x2^3*x3^7*x4^3*x5                  
    318. x2^7*x3*x4^3*x5^3                  319. x2^7*x3^3*x4*x5^3                  
    320. x2^7*x3^3*x4^3*x5                  



============================================================================
Final Conclusion: \dim (Q\mathcal{P}_{5})_{14} = 320

Total execution time: 4.99 seconds
============================================================================
\end{lstlisting}

\medskip

This result is consistent with previous manual computations given in~\cite{Phuc}.

\medskip

Moreover, our algorithm explicitly lists all \textbf{inadmissible monomials} of degree~14, incorporating an application of Theorem~\ref{dlKS} and a classification by groups of weight vectors, as shown in the algorithm output below.

\medskip

\begin{lstlisting}
============================================================================
                 FINAL ANALYSIS RESULTS
============================================================================
Case: k = 5, d = 14

Note on filtering rules: The following lists are generated after applying these theoretical filters:
1. Includes only inadmissible monomials with ALL POSITIVE exponents.
2. EXCLUDES those hit by Singer's criterion (where w(x) < w(minimal_spike)).
3. EXCLUDES those where the exponent of x1 is a POSITIVE EVEN number.

--- MONOMIALS FILTERED BY KAMEKO-SUM THEOREM (TYPE I) ---
Found and filtered 204 inadmissible(s) of the form x * w^(2^r):
  1. x1*x2*x3*x4*x5^10  -->  (x1*x2*x3*x4) * (x5^5)^2 (w is inadmissible of degree 5)
  2. x1*x2*x3*x4^2*x5^9  -->  (x1*x2*x3*x5) * (x4*x5^4)^2 (w is inadmissible of degree 5)
  3. x1*x2*x3^2*x4*x5^9  -->  (x1*x2*x4*x5) * (x3*x5^4)^2 (w is inadmissible of degree 5)
  4. x1*x2^2*x3*x4*x5^9  -->  (x1*x3*x4*x5) * (x2*x5^4)^2 (w is inadmissible of degree 5)
  5. x1*x2*x3*x4^3*x5^8  -->  (x1*x2*x3*x4) * (x4*x5^4)^2 (w is inadmissible of degree 5)
  6. x1*x2*x3^3*x4*x5^8  -->  (x1*x2*x3*x4) * (x3*x5^4)^2 (w is inadmissible of degree 5)
  7. x1*x2^3*x3*x4*x5^8  -->  (x1*x2*x3*x4) * (x2*x5^4)^2 (w is inadmissible of degree 5)
  8. x1^3*x2*x3*x4*x5^8  -->  (x1*x2*x3*x4) * (x1*x5^4)^2 (w is inadmissible of degree 5)
  9. x1*x2*x3*x4^4*x5^7  -->  (x1*x2*x3*x5) * (x4^2*x5^3)^2 (w is inadmissible of degree 5)
  10. x1*x2*x3^4*x4*x5^7  -->  (x1*x2*x4*x5) * (x3^2*x5^3)^2 (w is inadmissible of degree 5)
  11. x1*x2^4*x3*x4*x5^7  -->  (x1*x3*x4*x5) * (x2^2*x5^3)^2 (w is inadmissible of degree 5)
  12. x1*x2*x3*x4^5*x5^6  -->  (x1*x2*x3*x4) * (x4^2*x5^3)^2 (w is inadmissible of degree 5)
  13. x1*x2*x3^4*x4^2*x5^6  -->  (x1*x2) * (x3^2*x4*x5^3)^2 (w is inadmissible of degree 6)
  14. x1*x2^4*x3*x4^2*x5^6  -->  (x1*x3) * (x2^2*x4*x5^3)^2 (w is inadmissible of degree 6)
  15. x1*x2*x3^5*x4*x5^6  -->  (x1*x2*x3*x4) * (x3^2*x5^3)^2 (w is inadmissible of degree 5)
  16. x1*x2^4*x3^2*x4*x5^6  -->  (x1*x4) * (x2^2*x3*x5^3)^2 (w is inadmissible of degree 6)
  17. x1*x2^5*x3*x4*x5^6  -->  (x1*x2*x3*x4) * (x2^2*x5^3)^2 (w is inadmissible of degree 5)
  18. x1^5*x2*x3*x4*x5^6  -->  (x1*x2*x3*x4) * (x1^2*x5^3)^2 (w is inadmissible of degree 5)
  19. x1*x2*x3*x4^6*x5^5  -->  (x1*x2*x3*x5) * (x4^3*x5^2)^2 (w is inadmissible of degree 5)
  20. x1*x2*x3^2*x4^5*x5^5  -->  (x1*x2*x4*x5) * (x3*x4^2*x5^2)^2 (w is inadmissible of degree 5)
  21. x1*x2^2*x3*x4^5*x5^5  -->  (x1*x3*x4*x5) * (x2*x4^2*x5^2)^2 (w is inadmissible of degree 5)
  22. x1*x2*x3^3*x4^4*x5^5  -->  (x1*x2*x3*x5) * (x3*x4^2*x5^2)^2 (w is inadmissible of degree 5)
  23. x1*x2^3*x3*x4^4*x5^5  -->  (x1*x2*x3*x5) * (x2*x4^2*x5^2)^2 (w is inadmissible of degree 5)
  24. x1^3*x2*x3*x4^4*x5^5  -->  (x1*x2*x3*x5) * (x1*x4^2*x5^2)^2 (w is inadmissible of degree 5)
  25. x1*x2*x3^4*x4^3*x5^5  -->  (x1*x2*x4*x5) * (x3^2*x4*x5^2)^2 (w is inadmissible of degree 5)
  26. x1*x2^4*x3*x4^3*x5^5  -->  (x1*x3*x4*x5) * (x2^2*x4*x5^2)^2 (w is inadmissible of degree 5)
  27. x1*x2*x3^5*x4^2*x5^5  -->  (x1*x2*x3*x5) * (x3^2*x4*x5^2)^2 (w is inadmissible of degree 5)
  28. x1*x2^4*x3^2*x4^2*x5^5  -->  (x1*x5) * (x2^2*x3*x4*x5^2)^2 (w is inadmissible of degree 6)
  29. x1*x2^5*x3*x4^2*x5^5  -->  (x1*x2*x3*x5) * (x2^2*x4*x5^2)^2 (w is inadmissible of degree 5)
  30. x1^5*x2*x3*x4^2*x5^5  -->  (x1*x2*x3*x5) * (x1^2*x4*x5^2)^2 (w is inadmissible of degree 5)
  31. x1*x2*x3^6*x4*x5^5  -->  (x1*x2*x4*x5) * (x3^3*x5^2)^2 (w is inadmissible of degree 5)
  32. x1*x2^2*x3^5*x4*x5^5  -->  (x1*x3*x4*x5) * (x2*x3^2*x5^2)^2 (w is inadmissible of degree 5)
  33. x1*x2^3*x3^4*x4*x5^5  -->  (x1*x2*x4*x5) * (x2*x3^2*x5^2)^2 (w is inadmissible of degree 5)
  34. x1^3*x2*x3^4*x4*x5^5  -->  (x1*x2*x4*x5) * (x1*x3^2*x5^2)^2 (w is inadmissible of degree 5)
  35. x1*x2^4*x3^3*x4*x5^5  -->  (x1*x3*x4*x5) * (x2^2*x3*x5^2)^2 (w is inadmissible of degree 5)
  36. x1*x2^5*x3^2*x4*x5^5  -->  (x1*x2*x4*x5) * (x2^2*x3*x5^2)^2 (w is inadmissible of degree 5)
  37. x1^5*x2*x3^2*x4*x5^5  -->  (x1*x2*x4*x5) * (x1^2*x3*x5^2)^2 (w is inadmissible of degree 5)
  38. x1*x2^6*x3*x4*x5^5  -->  (x1*x3*x4*x5) * (x2^3*x5^2)^2 (w is inadmissible of degree 5)
  39. x1^3*x2^4*x3*x4*x5^5  -->  (x1*x3*x4*x5) * (x1*x2^2*x5^2)^2 (w is inadmissible of degree 5)
  40. x1^5*x2^2*x3*x4*x5^5  -->  (x1*x3*x4*x5) * (x1^2*x2*x5^2)^2 (w is inadmissible of degree 5)
  41. x1*x2*x3*x4^7*x5^4  -->  (x1*x2*x3*x4) * (x4^3*x5^2)^2 (w is inadmissible of degree 5)
  42. x1*x2*x3^3*x4^5*x5^4  -->  (x1*x2*x3*x4) * (x3*x4^2*x5^2)^2 (w is inadmissible of degree 5)
  43. x1*x2^3*x3*x4^5*x5^4  -->  (x1*x2*x3*x4) * (x2*x4^2*x5^2)^2 (w is inadmissible of degree 5)
  44. x1^3*x2*x3*x4^5*x5^4  -->  (x1*x2*x3*x4) * (x1*x4^2*x5^2)^2 (w is inadmissible of degree 5)
  45. x1*x2*x3^5*x4^3*x5^4  -->  (x1*x2*x3*x4) * (x3^2*x4*x5^2)^2 (w is inadmissible of degree 5)
  46. x1*x2^4*x3^2*x4^3*x5^4  -->  (x1*x4) * (x2^2*x3*x4*x5^2)^2 (w is inadmissible of degree 6)
  47. x1*x2^5*x3*x4^3*x5^4  -->  (x1*x2*x3*x4) * (x2^2*x4*x5^2)^2 (w is inadmissible of degree 5)
  48. x1^5*x2*x3*x4^3*x5^4  -->  (x1*x2*x3*x4) * (x1^2*x4*x5^2)^2 (w is inadmissible of degree 5)
  49. x1*x2^4*x3^3*x4^2*x5^4  -->  (x1*x3) * (x2^2*x3*x4*x5^2)^2 (w is inadmissible of degree 6)
  50. x1*x2^5*x3^2*x4^2*x5^4  -->  (x1*x2) * (x2^2*x3*x4*x5^2)^2 (w is inadmissible of degree 6)
  51. x1^5*x2*x3^2*x4^2*x5^4  -->  (x1*x2) * (x1^2*x3*x4*x5^2)^2 (w is inadmissible of degree 6)
  52. x1^5*x2^2*x3*x4^2*x5^4  -->  (x1*x3) * (x1^2*x2*x4*x5^2)^2 (w is inadmissible of degree 6)
  53. x1*x2*x3^7*x4*x5^4  -->  (x1*x2*x3*x4) * (x3^3*x5^2)^2 (w is inadmissible of degree 5)
  54. x1*x2^3*x3^5*x4*x5^4  -->  (x1*x2*x3*x4) * (x2*x3^2*x5^2)^2 (w is inadmissible of degree 5)
  55. x1^3*x2*x3^5*x4*x5^4  -->  (x1*x2*x3*x4) * (x1*x3^2*x5^2)^2 (w is inadmissible of degree 5)
  56. x1*x2^5*x3^3*x4*x5^4  -->  (x1*x2*x3*x4) * (x2^2*x3*x5^2)^2 (w is inadmissible of degree 5)
  57. x1^5*x2*x3^3*x4*x5^4  -->  (x1*x2*x3*x4) * (x1^2*x3*x5^2)^2 (w is inadmissible of degree 5)
  58. x1^5*x2^2*x3^2*x4*x5^4  -->  (x1*x4) * (x1^2*x2*x3*x5^2)^2 (w is inadmissible of degree 6)
  59. x1*x2^7*x3*x4*x5^4  -->  (x1*x2*x3*x4) * (x2^3*x5^2)^2 (w is inadmissible of degree 5)
  60. x1^3*x2^5*x3*x4*x5^4  -->  (x1*x2*x3*x4) * (x1*x2^2*x5^2)^2 (w is inadmissible of degree 5)
  61. x1^5*x2^3*x3*x4*x5^4  -->  (x1*x2*x3*x4) * (x1^2*x2*x5^2)^2 (w is inadmissible of degree 5)
  62. x1^7*x2*x3*x4*x5^4  -->  (x1*x2*x3*x4) * (x1^3*x5^2)^2 (w is inadmissible of degree 5)
  63. x1*x2*x3*x4^8*x5^3  -->  (x1*x2*x3*x5) * (x4^4*x5)^2 (w is inadmissible of degree 5)
  64. x1*x2*x3^4*x4^5*x5^3  -->  (x1*x2*x4*x5) * (x3^2*x4^2*x5)^2 (w is inadmissible of degree 5)
  65. x1*x2^4*x3*x4^5*x5^3  -->  (x1*x3*x4*x5) * (x2^2*x4^2*x5)^2 (w is inadmissible of degree 5)
  66. x1*x2*x3^5*x4^4*x5^3  -->  (x1*x2*x3*x5) * (x3^2*x4^2*x5)^2 (w is inadmissible of degree 5)
  67. x1*x2^2*x3^4*x4^4*x5^3  -->  (x1*x5) * (x2*x3^2*x4^2*x5)^2 (w is inadmissible of degree 6)
  68. x1*x2^4*x3^2*x4^4*x5^3  -->  (x1*x5) * (x2^2*x3*x4^2*x5)^2 (w is inadmissible of degree 6)
  69. x1*x2^5*x3*x4^4*x5^3  -->  (x1*x2*x3*x5) * (x2^2*x4^2*x5)^2 (w is inadmissible of degree 5)
  70. x1^5*x2*x3*x4^4*x5^3  -->  (x1*x2*x3*x5) * (x1^2*x4^2*x5)^2 (w is inadmissible of degree 5)
  71. x1*x2^4*x3^3*x4^3*x5^3  -->  (x1*x3*x4*x5) * (x2^2*x3*x4*x5)^2 (w is inadmissible of degree 5)
  72. x1*x2^5*x3^2*x4^3*x5^3  -->  (x1*x2*x4*x5) * (x2^2*x3*x4*x5)^2 (w is inadmissible of degree 5)
  73. x1^5*x2*x3^2*x4^3*x5^3  -->  (x1*x2*x4*x5) * (x1^2*x3*x4*x5)^2 (w is inadmissible of degree 5)
  74. x1^5*x2^2*x3*x4^3*x5^3  -->  (x1*x3*x4*x5) * (x1^2*x2*x4*x5)^2 (w is inadmissible of degree 5)
  75. x1*x2^4*x3^4*x4^2*x5^3  -->  (x1*x5) * (x2^2*x3^2*x4*x5)^2 (w is inadmissible of degree 6)
  76. x1*x2^5*x3^3*x4^2*x5^3  -->  (x1*x2*x3*x5) * (x2^2*x3*x4*x5)^2 (w is inadmissible of degree 5)
  77. x1^5*x2*x3^3*x4^2*x5^3  -->  (x1*x2*x3*x5) * (x1^2*x3*x4*x5)^2 (w is inadmissible of degree 5)
  78. x1^5*x2^2*x3^2*x4^2*x5^3  -->  (x1*x5) * (x1^2*x2*x3*x4*x5)^2 (w is inadmissible of degree 6)
  79. x1^5*x2^3*x3*x4^2*x5^3  -->  (x1*x2*x3*x5) * (x1^2*x2*x4*x5)^2 (w is inadmissible of degree 5)
  80. x1*x2*x3^8*x4*x5^3  -->  (x1*x2*x4*x5) * (x3^4*x5)^2 (w is inadmissible of degree 5)
  81. x1*x2^4*x3^5*x4*x5^3  -->  (x1*x3*x4*x5) * (x2^2*x3^2*x5)^2 (w is inadmissible of degree 5)
  82. x1*x2^5*x3^4*x4*x5^3  -->  (x1*x2*x4*x5) * (x2^2*x3^2*x5)^2 (w is inadmissible of degree 5)
  83. x1^5*x2*x3^4*x4*x5^3  -->  (x1*x2*x4*x5) * (x1^2*x3^2*x5)^2 (w is inadmissible of degree 5)
  84. x1^5*x2^2*x3^3*x4*x5^3  -->  (x1*x3*x4*x5) * (x1^2*x2*x3*x5)^2 (w is inadmissible of degree 5)
  85. x1^5*x2^3*x3^2*x4*x5^3  -->  (x1*x2*x4*x5) * (x1^2*x2*x3*x5)^2 (w is inadmissible of degree 5)
  86. x1*x2^8*x3*x4*x5^3  -->  (x1*x3*x4*x5) * (x2^4*x5)^2 (w is inadmissible of degree 5)
  87. x1^5*x2^4*x3*x4*x5^3  -->  (x1*x3*x4*x5) * (x1^2*x2^2*x5)^2 (w is inadmissible of degree 5)
  88. x1*x2*x3*x4^9*x5^2  -->  (x1*x2*x3*x4) * (x4^4*x5)^2 (w is inadmissible of degree 5)
  89. x1*x2*x3^4*x4^6*x5^2  -->  (x1*x2) * (x3^2*x4^3*x5)^2 (w is inadmissible of degree 6)
  90. x1*x2^4*x3*x4^6*x5^2  -->  (x1*x3) * (x2^2*x4^3*x5)^2 (w is inadmissible of degree 6)
  91. x1*x2*x3^5*x4^5*x5^2  -->  (x1*x2*x3*x4) * (x3^2*x4^2*x5)^2 (w is inadmissible of degree 5)
  92. x1*x2^2*x3^4*x4^5*x5^2  -->  (x1*x4) * (x2*x3^2*x4^2*x5)^2 (w is inadmissible of degree 6)
  93. x1*x2^4*x3^2*x4^5*x5^2  -->  (x1*x4) * (x2^2*x3*x4^2*x5)^2 (w is inadmissible of degree 6)
  94. x1*x2^5*x3*x4^5*x5^2  -->  (x1*x2*x3*x4) * (x2^2*x4^2*x5)^2 (w is inadmissible of degree 5)
  95. x1^5*x2*x3*x4^5*x5^2  -->  (x1*x2*x3*x4) * (x1^2*x4^2*x5)^2 (w is inadmissible of degree 5)
  96. x1*x2*x3^6*x4^4*x5^2  -->  (x1*x2) * (x3^3*x4^2*x5)^2 (w is inadmissible of degree 6)
  97. x1*x2^2*x3^5*x4^4*x5^2  -->  (x1*x3) * (x2*x3^2*x4^2*x5)^2 (w is inadmissible of degree 6)
  98. x1*x2^3*x3^4*x4^4*x5^2  -->  (x1*x2) * (x2*x3^2*x4^2*x5)^2 (w is inadmissible of degree 6)
  99. x1^3*x2*x3^4*x4^4*x5^2  -->  (x1*x2) * (x1*x3^2*x4^2*x5)^2 (w is inadmissible of degree 6)
  100. x1*x2^4*x3^3*x4^4*x5^2  -->  (x1*x3) * (x2^2*x3*x4^2*x5)^2 (w is inadmissible of degree 6)
  101. x1*x2^5*x3^2*x4^4*x5^2  -->  (x1*x2) * (x2^2*x3*x4^2*x5)^2 (w is inadmissible of degree 6)
  102. x1^5*x2*x3^2*x4^4*x5^2  -->  (x1*x2) * (x1^2*x3*x4^2*x5)^2 (w is inadmissible of degree 6)
  103. x1*x2^6*x3*x4^4*x5^2  -->  (x1*x3) * (x2^3*x4^2*x5)^2 (w is inadmissible of degree 6)
  104. x1^3*x2^4*x3*x4^4*x5^2  -->  (x1*x3) * (x1*x2^2*x4^2*x5)^2 (w is inadmissible of degree 6)
  105. x1^5*x2^2*x3*x4^4*x5^2  -->  (x1*x3) * (x1^2*x2*x4^2*x5)^2 (w is inadmissible of degree 6)
  106. x1*x2^4*x3^4*x4^3*x5^2  -->  (x1*x4) * (x2^2*x3^2*x4*x5)^2 (w is inadmissible of degree 6)
  107. x1*x2^5*x3^3*x4^3*x5^2  -->  (x1*x2*x3*x4) * (x2^2*x3*x4*x5)^2 (w is inadmissible of degree 5)
  108. x1^5*x2*x3^3*x4^3*x5^2  -->  (x1*x2*x3*x4) * (x1^2*x3*x4*x5)^2 (w is inadmissible of degree 5)
  109. x1^5*x2^2*x3^2*x4^3*x5^2  -->  (x1*x4) * (x1^2*x2*x3*x4*x5)^2 (w is inadmissible of degree 6)
  110. x1^5*x2^3*x3*x4^3*x5^2  -->  (x1*x2*x3*x4) * (x1^2*x2*x4*x5)^2 (w is inadmissible of degree 5)
  111. x1*x2^4*x3^5*x4^2*x5^2  -->  (x1*x3) * (x2^2*x3^2*x4*x5)^2 (w is inadmissible of degree 6)
  112. x1*x2^5*x3^4*x4^2*x5^2  -->  (x1*x2) * (x2^2*x3^2*x4*x5)^2 (w is inadmissible of degree 6)
  113. x1^5*x2*x3^4*x4^2*x5^2  -->  (x1*x2) * (x1^2*x3^2*x4*x5)^2 (w is inadmissible of degree 6)
  114. x1^5*x2^2*x3^3*x4^2*x5^2  -->  (x1*x3) * (x1^2*x2*x3*x4*x5)^2 (w is inadmissible of degree 6)
  115. x1^5*x2^3*x3^2*x4^2*x5^2  -->  (x1*x2) * (x1^2*x2*x3*x4*x5)^2 (w is inadmissible of degree 6)
  116. x1^5*x2^4*x3*x4^2*x5^2  -->  (x1*x3) * (x1^2*x2^2*x4*x5)^2 (w is inadmissible of degree 6)
  117. x1*x2*x3^9*x4*x5^2  -->  (x1*x2*x3*x4) * (x3^4*x5)^2 (w is inadmissible of degree 5)
  118. x1*x2^4*x3^6*x4*x5^2  -->  (x1*x4) * (x2^2*x3^3*x5)^2 (w is inadmissible of degree 6)
  119. x1*x2^5*x3^5*x4*x5^2  -->  (x1*x2*x3*x4) * (x2^2*x3^2*x5)^2 (w is inadmissible of degree 5)
  120. x1^5*x2*x3^5*x4*x5^2  -->  (x1*x2*x3*x4) * (x1^2*x3^2*x5)^2 (w is inadmissible of degree 5)
  121. x1*x2^6*x3^4*x4*x5^2  -->  (x1*x4) * (x2^3*x3^2*x5)^2 (w is inadmissible of degree 6)
  122. x1^3*x2^4*x3^4*x4*x5^2  -->  (x1*x4) * (x1*x2^2*x3^2*x5)^2 (w is inadmissible of degree 6)
  123. x1^5*x2^2*x3^4*x4*x5^2  -->  (x1*x4) * (x1^2*x2*x3^2*x5)^2 (w is inadmissible of degree 6)
  124. x1^5*x2^3*x3^3*x4*x5^2  -->  (x1*x2*x3*x4) * (x1^2*x2*x3*x5)^2 (w is inadmissible of degree 5)
  125. x1^5*x2^4*x3^2*x4*x5^2  -->  (x1*x4) * (x1^2*x2^2*x3*x5)^2 (w is inadmissible of degree 6)
  126. x1*x2^9*x3*x4*x5^2  -->  (x1*x2*x3*x4) * (x2^4*x5)^2 (w is inadmissible of degree 5)
  127. x1^5*x2^5*x3*x4*x5^2  -->  (x1*x2*x3*x4) * (x1^2*x2^2*x5)^2 (w is inadmissible of degree 5)
  128. x1^9*x2*x3*x4*x5^2  -->  (x1*x2*x3*x4) * (x1^4*x5)^2 (w is inadmissible of degree 5)
  129. x1*x2*x3*x4^10*x5  -->  (x1*x2*x3*x5) * (x4^5)^2 (w is inadmissible of degree 5)
  130. x1*x2*x3^2*x4^9*x5  -->  (x1*x2*x4*x5) * (x3*x4^4)^2 (w is inadmissible of degree 5)
  131. x1*x2^2*x3*x4^9*x5  -->  (x1*x3*x4*x5) * (x2*x4^4)^2 (w is inadmissible of degree 5)
  132. x1*x2*x3^3*x4^8*x5  -->  (x1*x2*x3*x5) * (x3*x4^4)^2 (w is inadmissible of degree 5)
  133. x1*x2^3*x3*x4^8*x5  -->  (x1*x2*x3*x5) * (x2*x4^4)^2 (w is inadmissible of degree 5)
  134. x1^3*x2*x3*x4^8*x5  -->  (x1*x2*x3*x5) * (x1*x4^4)^2 (w is inadmissible of degree 5)
  135. x1*x2*x3^4*x4^7*x5  -->  (x1*x2*x4*x5) * (x3^2*x4^3)^2 (w is inadmissible of degree 5)
  136. x1*x2^4*x3*x4^7*x5  -->  (x1*x3*x4*x5) * (x2^2*x4^3)^2 (w is inadmissible of degree 5)
  137. x1*x2*x3^5*x4^6*x5  -->  (x1*x2*x3*x5) * (x3^2*x4^3)^2 (w is inadmissible of degree 5)
  138. x1*x2^4*x3^2*x4^6*x5  -->  (x1*x5) * (x2^2*x3*x4^3)^2 (w is inadmissible of degree 6)
  139. x1*x2^5*x3*x4^6*x5  -->  (x1*x2*x3*x5) * (x2^2*x4^3)^2 (w is inadmissible of degree 5)
  140. x1^5*x2*x3*x4^6*x5  -->  (x1*x2*x3*x5) * (x1^2*x4^3)^2 (w is inadmissible of degree 5)
  141. x1*x2*x3^6*x4^5*x5  -->  (x1*x2*x4*x5) * (x3^3*x4^2)^2 (w is inadmissible of degree 5)
  142. x1*x2^2*x3^5*x4^5*x5  -->  (x1*x3*x4*x5) * (x2*x3^2*x4^2)^2 (w is inadmissible of degree 5)
  143. x1*x2^3*x3^4*x4^5*x5  -->  (x1*x2*x4*x5) * (x2*x3^2*x4^2)^2 (w is inadmissible of degree 5)
  144. x1^3*x2*x3^4*x4^5*x5  -->  (x1*x2*x4*x5) * (x1*x3^2*x4^2)^2 (w is inadmissible of degree 5)
  145. x1*x2^4*x3^3*x4^5*x5  -->  (x1*x3*x4*x5) * (x2^2*x3*x4^2)^2 (w is inadmissible of degree 5)
  146. x1*x2^5*x3^2*x4^5*x5  -->  (x1*x2*x4*x5) * (x2^2*x3*x4^2)^2 (w is inadmissible of degree 5)
  147. x1^5*x2*x3^2*x4^5*x5  -->  (x1*x2*x4*x5) * (x1^2*x3*x4^2)^2 (w is inadmissible of degree 5)
  148. x1*x2^6*x3*x4^5*x5  -->  (x1*x3*x4*x5) * (x2^3*x4^2)^2 (w is inadmissible of degree 5)
  149. x1^3*x2^4*x3*x4^5*x5  -->  (x1*x3*x4*x5) * (x1*x2^2*x4^2)^2 (w is inadmissible of degree 5)
  150. x1^5*x2^2*x3*x4^5*x5  -->  (x1*x3*x4*x5) * (x1^2*x2*x4^2)^2 (w is inadmissible of degree 5)
  151. x1*x2*x3^7*x4^4*x5  -->  (x1*x2*x3*x5) * (x3^3*x4^2)^2 (w is inadmissible of degree 5)
  152. x1*x2^3*x3^5*x4^4*x5  -->  (x1*x2*x3*x5) * (x2*x3^2*x4^2)^2 (w is inadmissible of degree 5)
  153. x1^3*x2*x3^5*x4^4*x5  -->  (x1*x2*x3*x5) * (x1*x3^2*x4^2)^2 (w is inadmissible of degree 5)
  154. x1*x2^5*x3^3*x4^4*x5  -->  (x1*x2*x3*x5) * (x2^2*x3*x4^2)^2 (w is inadmissible of degree 5)
  155. x1^5*x2*x3^3*x4^4*x5  -->  (x1*x2*x3*x5) * (x1^2*x3*x4^2)^2 (w is inadmissible of degree 5)
  156. x1^5*x2^2*x3^2*x4^4*x5  -->  (x1*x5) * (x1^2*x2*x3*x4^2)^2 (w is inadmissible of degree 6)
  157. x1*x2^7*x3*x4^4*x5  -->  (x1*x2*x3*x5) * (x2^3*x4^2)^2 (w is inadmissible of degree 5)
  158. x1^3*x2^5*x3*x4^4*x5  -->  (x1*x2*x3*x5) * (x1*x2^2*x4^2)^2 (w is inadmissible of degree 5)
  159. x1^5*x2^3*x3*x4^4*x5  -->  (x1*x2*x3*x5) * (x1^2*x2*x4^2)^2 (w is inadmissible of degree 5)
  160. x1^7*x2*x3*x4^4*x5  -->  (x1*x2*x3*x5) * (x1^3*x4^2)^2 (w is inadmissible of degree 5)
  161. x1*x2*x3^8*x4^3*x5  -->  (x1*x2*x4*x5) * (x3^4*x4)^2 (w is inadmissible of degree 5)
  162. x1*x2^4*x3^5*x4^3*x5  -->  (x1*x3*x4*x5) * (x2^2*x3^2*x4)^2 (w is inadmissible of degree 5)
  163. x1*x2^5*x3^4*x4^3*x5  -->  (x1*x2*x4*x5) * (x2^2*x3^2*x4)^2 (w is inadmissible of degree 5)
  164. x1^5*x2*x3^4*x4^3*x5  -->  (x1*x2*x4*x5) * (x1^2*x3^2*x4)^2 (w is inadmissible of degree 5)
  165. x1^5*x2^2*x3^3*x4^3*x5  -->  (x1*x3*x4*x5) * (x1^2*x2*x3*x4)^2 (w is inadmissible of degree 5)
  166. x1^5*x2^3*x3^2*x4^3*x5  -->  (x1*x2*x4*x5) * (x1^2*x2*x3*x4)^2 (w is inadmissible of degree 5)
  167. x1*x2^8*x3*x4^3*x5  -->  (x1*x3*x4*x5) * (x2^4*x4)^2 (w is inadmissible of degree 5)
  168. x1^5*x2^4*x3*x4^3*x5  -->  (x1*x3*x4*x5) * (x1^2*x2^2*x4)^2 (w is inadmissible of degree 5)
  169. x1*x2*x3^9*x4^2*x5  -->  (x1*x2*x3*x5) * (x3^4*x4)^2 (w is inadmissible of degree 5)
  170. x1*x2^4*x3^6*x4^2*x5  -->  (x1*x5) * (x2^2*x3^3*x4)^2 (w is inadmissible of degree 6)
  171. x1*x2^5*x3^5*x4^2*x5  -->  (x1*x2*x3*x5) * (x2^2*x3^2*x4)^2 (w is inadmissible of degree 5)
  172. x1^5*x2*x3^5*x4^2*x5  -->  (x1*x2*x3*x5) * (x1^2*x3^2*x4)^2 (w is inadmissible of degree 5)
  173. x1*x2^6*x3^4*x4^2*x5  -->  (x1*x5) * (x2^3*x3^2*x4)^2 (w is inadmissible of degree 6)
  174. x1^3*x2^4*x3^4*x4^2*x5  -->  (x1*x5) * (x1*x2^2*x3^2*x4)^2 (w is inadmissible of degree 6)
  175. x1^5*x2^2*x3^4*x4^2*x5  -->  (x1*x5) * (x1^2*x2*x3^2*x4)^2 (w is inadmissible of degree 6)
  176. x1^5*x2^3*x3^3*x4^2*x5  -->  (x1*x2*x3*x5) * (x1^2*x2*x3*x4)^2 (w is inadmissible of degree 5)
  177. x1^5*x2^4*x3^2*x4^2*x5  -->  (x1*x5) * (x1^2*x2^2*x3*x4)^2 (w is inadmissible of degree 6)
  178. x1*x2^9*x3*x4^2*x5  -->  (x1*x2*x3*x5) * (x2^4*x4)^2 (w is inadmissible of degree 5)
  179. x1^5*x2^5*x3*x4^2*x5  -->  (x1*x2*x3*x5) * (x1^2*x2^2*x4)^2 (w is inadmissible of degree 5)
  180. x1^9*x2*x3*x4^2*x5  -->  (x1*x2*x3*x5) * (x1^4*x4)^2 (w is inadmissible of degree 5)
  181. x1*x2*x3^10*x4*x5  -->  (x1*x2*x4*x5) * (x3^5)^2 (w is inadmissible of degree 5)
  182. x1*x2^2*x3^9*x4*x5  -->  (x1*x3*x4*x5) * (x2*x3^4)^2 (w is inadmissible of degree 5)
  183. x1*x2^3*x3^8*x4*x5  -->  (x1*x2*x4*x5) * (x2*x3^4)^2 (w is inadmissible of degree 5)
  184. x1^3*x2*x3^8*x4*x5  -->  (x1*x2*x4*x5) * (x1*x3^4)^2 (w is inadmissible of degree 5)
  185. x1*x2^4*x3^7*x4*x5  -->  (x1*x3*x4*x5) * (x2^2*x3^3)^2 (w is inadmissible of degree 5)
  186. x1*x2^5*x3^6*x4*x5  -->  (x1*x2*x4*x5) * (x2^2*x3^3)^2 (w is inadmissible of degree 5)
  187. x1^5*x2*x3^6*x4*x5  -->  (x1*x2*x4*x5) * (x1^2*x3^3)^2 (w is inadmissible of degree 5)
  188. x1*x2^6*x3^5*x4*x5  -->  (x1*x3*x4*x5) * (x2^3*x3^2)^2 (w is inadmissible of degree 5)
  189. x1^3*x2^4*x3^5*x4*x5  -->  (x1*x3*x4*x5) * (x1*x2^2*x3^2)^2 (w is inadmissible of degree 5)
  190. x1^5*x2^2*x3^5*x4*x5  -->  (x1*x3*x4*x5) * (x1^2*x2*x3^2)^2 (w is inadmissible of degree 5)
  191. x1*x2^7*x3^4*x4*x5  -->  (x1*x2*x4*x5) * (x2^3*x3^2)^2 (w is inadmissible of degree 5)
  192. x1^3*x2^5*x3^4*x4*x5  -->  (x1*x2*x4*x5) * (x1*x2^2*x3^2)^2 (w is inadmissible of degree 5)
  193. x1^5*x2^3*x3^4*x4*x5  -->  (x1*x2*x4*x5) * (x1^2*x2*x3^2)^2 (w is inadmissible of degree 5)
  194. x1^7*x2*x3^4*x4*x5  -->  (x1*x2*x4*x5) * (x1^3*x3^2)^2 (w is inadmissible of degree 5)
  195. x1*x2^8*x3^3*x4*x5  -->  (x1*x3*x4*x5) * (x2^4*x3)^2 (w is inadmissible of degree 5)
  196. x1^5*x2^4*x3^3*x4*x5  -->  (x1*x3*x4*x5) * (x1^2*x2^2*x3)^2 (w is inadmissible of degree 5)
  197. x1*x2^9*x3^2*x4*x5  -->  (x1*x2*x4*x5) * (x2^4*x3)^2 (w is inadmissible of degree 5)
  198. x1^5*x2^5*x3^2*x4*x5  -->  (x1*x2*x4*x5) * (x1^2*x2^2*x3)^2 (w is inadmissible of degree 5)
  199. x1^9*x2*x3^2*x4*x5  -->  (x1*x2*x4*x5) * (x1^4*x3)^2 (w is inadmissible of degree 5)
  200. x1*x2^10*x3*x4*x5  -->  (x1*x3*x4*x5) * (x2^5)^2 (w is inadmissible of degree 5)
  201. x1^3*x2^8*x3*x4*x5  -->  (x1*x3*x4*x5) * (x1*x2^4)^2 (w is inadmissible of degree 5)
  202. x1^5*x2^6*x3*x4*x5  -->  (x1*x3*x4*x5) * (x1^2*x2^3)^2 (w is inadmissible of degree 5)
  203. x1^7*x2^4*x3*x4*x5  -->  (x1*x3*x4*x5) * (x1^3*x2^2)^2 (w is inadmissible of degree 5)
  204. x1^9*x2^2*x3*x4*x5  -->  (x1*x3*x4*x5) * (x1^4*x2)^2 (w is inadmissible of degree 5)

--- MONOMIALS FILTERED BY KAMEKO-SUM THEOREM (TYPE II) ---
(None)

============================================================================

Found a total of 70 'TRULY INTERESTING' inadmissible monomials remaining.

* Group with weight vector w = (2, 2, 2) (13 elements):
  1. x1*x2^2*x3^2*x4^4*x5^5               2. x1*x2^2*x3^2*x4^5*x5^4             
  3. x1*x2^2*x3^4*x4^2*x5^5               4. x1*x2^2*x3^4*x4^6*x5               
  5. x1*x2^2*x3^6*x4*x5^4                 6. x1*x2^2*x3^6*x4^4*x5               
  7. x1*x2^6*x3^2*x4*x5^4                 8. x1*x2^6*x3^2*x4^4*x5               
  9. x1^3*x2^2*x3*x4^4*x5^4               10. x1^3*x2^2*x3^4*x4*x5^4            
  11. x1^3*x2^2*x3^4*x4^4*x5              12. x1^3*x2^4*x3^2*x4*x5^4            
  13. x1^3*x2^4*x3^2*x4^4*x5            

* Group with weight vector w = (2, 4, 1) (33 elements):
  14. x1*x2^2*x3^2*x4^6*x5^3              15. x1*x2^2*x3^6*x4^2*x5^3            
  16. x1*x2^2*x3^6*x4^3*x5^2              17. x1*x2^6*x3^2*x4^2*x5^3            
  18. x1*x2^6*x3^2*x4^3*x5^2              19. x1*x2^6*x3^3*x4^2*x5^2            
  20. x1^3*x2^2*x3*x4^2*x5^6              21. x1^3*x2^2*x3*x4^6*x5^2            
  22. x1^3*x2^2*x3^2*x4*x5^6              23. x1^3*x2^2*x3^2*x4^2*x5^5          
  24. x1^3*x2^2*x3^2*x4^3*x5^4            25. x1^3*x2^2*x3^2*x4^4*x5^3          
  26. x1^3*x2^2*x3^2*x4^5*x5^2            27. x1^3*x2^2*x3^2*x4^6*x5            
  28. x1^3*x2^2*x3^3*x4^2*x5^4            29. x1^3*x2^2*x3^3*x4^4*x5^2          
  30. x1^3*x2^2*x3^4*x4^2*x5^3            31. x1^3*x2^2*x3^4*x4^3*x5^2          
  32. x1^3*x2^2*x3^5*x4^2*x5^2            33. x1^3*x2^2*x3^6*x4*x5^2            
  34. x1^3*x2^2*x3^6*x4^2*x5              35. x1^3*x2^3*x3^2*x4^2*x5^4          
  36. x1^3*x2^3*x3^2*x4^4*x5^2            37. x1^3*x2^3*x3^4*x4^2*x5^2          
  38. x1^3*x2^4*x3^2*x4^2*x5^3            39. x1^3*x2^4*x3^2*x4^3*x5^2          
  40. x1^3*x2^4*x3^3*x4^2*x5^2            41. x1^3*x2^6*x3*x4^2*x5^2            
  42. x1^3*x2^6*x3^2*x4*x5^2              43. x1^3*x2^6*x3^2*x4^2*x5            
  44. x1^7*x2^2*x3*x4^2*x5^2              45. x1^7*x2^2*x3^2*x4*x5^2            
  46. x1^7*x2^2*x3^2*x4^2*x5            

* Group with weight vector w = (4, 3, 1) (20 elements):
  47. x1^3*x2^2*x3*x4*x5^7                48. x1^3*x2^2*x3*x4^3*x5^5            
  49. x1^3*x2^2*x3*x4^5*x5^3              50. x1^3*x2^2*x3*x4^7*x5              
  51. x1^3*x2^2*x3^3*x4*x5^5              52. x1^3*x2^2*x3^3*x4^5*x5            
  53. x1^3*x2^2*x3^5*x4*x5^3              54. x1^3*x2^2*x3^5*x4^3*x5            
  55. x1^3*x2^2*x3^7*x4*x5                56. x1^3*x2^3*x3^2*x4*x5^5            
  57. x1^3*x2^3*x3^2*x4^5*x5              58. x1^3*x2^3*x3^6*x4*x5              
  59. x1^3*x2^6*x3*x4*x5^3                60. x1^3*x2^6*x3*x4^3*x5              
  61. x1^3*x2^6*x3^3*x4*x5                62. x1^3*x2^7*x3^2*x4*x5              
  63. x1^7*x2^2*x3*x4*x5^3                64. x1^7*x2^2*x3*x4^3*x5              
  65. x1^7*x2^2*x3^3*x4*x5                66. x1^7*x2^3*x3^2*x4*x5              

* Group with weight vector w = (4, 5) (4 elements):
  67. x1^3*x2^2*x3^3*x4^3*x5^3            68. x1^3*x2^3*x3^2*x4^3*x5^3          
  69. x1^3*x2^3*x3^3*x4^2*x5^3            70. x1^3*x2^3*x3^3*x4^3*x5^2          

Total execution time: 5.77 seconds
============================================================================

\end{lstlisting}

\medskip

As an application, we study the \textit{Singer algebraic transfer}, which is a homomorphism of the form
\[
\mathrm{Tr}_k: \left[(Q\mathcal{P}_k)^{GL_k}_d\right]^* \longrightarrow \mathrm{Ext}_{\mathcal{A}}^{k,\,k+d}(\mathbb{F}_2, \mathbb{F}_2),
\]
from the dual of the \(GL_k\)-invariant subspace \((Q\mathcal{P}_k)^{GL_k}_d\) to the mod-2 cohomology of the Steenrod algebra. This transfer plays a crucial role in uncovering the structure of the mysterious Ext groups. To investigate \(\mathrm{Tr}_k\), one can explicitly determine both the domain and codomain of the transfer in generic degrees \(d\) satisfying \(\mu(d) < k\).

\medskip

As is well known, the most natural and straightforward approach to computing the domain of \(\mathrm{Tr}_k\) is based on the admissible basis of \((Q\mathcal{P}_k)_d\). 

\medskip

Thus, by carefully and meticulously constructing an algorithm in \textsc{SageMath}, we are able to effectively obtain the basis of both the \(\Sigma_k\)-invariant and the \(GL_k\)-invariant subspaces of \( (Q\mathcal{P}_k)_d \). 
\noindent

\noindent
As an illustration, consider the case \( k = 3 \) and \( d = 31 \). When executing our algorithm in \textsc{SageMath}, we obtain consistent results through two complementary approaches: \textit{one relying on global invariant computation, and the other employing a divide-and-conquer strategy}.

\medskip

$\bullet$ \textbf{\underline{Method 1: Global invariant computation}}

\medskip

\begin{lstlisting}
============================================================================
STARTING INVARIANT SUBSPACE COMPUTATION (Global Method)
Case: k = 3, d = 31
============================================================================

--> STEP 1: Computing the full admissible basis for (QP_{k})_{d}...
    ...Found a global basis with 14 monomials.

============================================================================
ADMISSIBLE BASIS SUMMARY BY WEIGHT
============================================================================
Total dimension of (Q P_{3})_{31}: 14
- dim Q(P_{3})_{31}(w=[1, 1, 1, 1, 1]) = 7
- dim Q(P_{3})_{31}(w=[3, 2, 2, 2]) = 7

--> STEP 2: Building action matrices on the full basis...
    ...Action matrices for rho_1, ..., rho_4 computed.

--> STEP 3: Finding invariants on the entire space...

============================================================================
FINAL S_3 INVARIANTS for (QP_3)_31
============================================================================
Dimension of (QP_{3})_{31}^S_3: 6
  Sigma_3 Invariant 1 = [x1^31] + [x2^31] + [x3^31]
  Sigma_3 Invariant 2 = [x1*x2^30] + [x1*x3^30] + [x2*x3^30]
  Sigma_3 Invariant 3 = [x1*x2^2*x3^28]
  Sigma_3 Invariant 4 = [x1*x2^15*x3^15] + [x1^15*x2*x3^15] 
									+ [x1^15*x2^15*x3]
  Sigma_3 Invariant 5 = [x1^15*x2^3*x3^13] + [x1^3*x2^13*x3^15] 
									+ [x1^3*x2^15*x3^13]
  Sigma_3 Invariant 6 = [x1^7*x2^11*x3^13]

============================================================================
FINAL GL_3 INVARIANTS for (QP_3)_31
============================================================================
Dimension of (QP_{3})_{31}^GL_3: 2
  GL_3 Invariant 1 = [x1*x2^15*x3^15] + [x1*x2^30] + [x1*x3^30] 
								+ [x1^15*x2*x3^15] + [x1^15*x2^15*x3]
								+ [x1^15*x2^3*x3^13] + [x1^3*x2^13*x3^15] 
								+ [x1^3*x2^15*x3^13] + [x1^31]
								+ [x1^7*x2^11*x3^13] + [x2*x3^30] 
								+ [x2^31] + [x3^31]
  GL_3 Invariant 2 = [x1*x2^15*x3^15] + [x1*x2^2*x3^28] 
								+ [x1^15*x2*x3^15] + [x1^15*x2^15*x3]
								+ [x1^15*x2^3*x3^13] + [x1^3*x2^13*x3^15] 
								+ [x1^3*x2^15*x3^13] + [x1^7*x2^11*x3^13]

============================================================================
ENTIRE COMPUTATION PROCESS COMPLETED
Total execution time: 16.49 seconds
============================================================================
\end{lstlisting}

\medskip

$\bullet$ \textbf{\underline{Method 2: Divide-and-Conquer Strategy (Connected Component Decomposition Method)}}

\begin{lstlisting}
============================================================================
STARTING INVARIANT SUBSPACE COMPUTATION (Connected Component Decomposition Method)
Case: k = 3, d = 31
============================================================================
--> STEP 1: Computing global admissible basis for (QP_3)_31...
    ...Found a global basis with 14 monomials.
--> STEP 2: Partitioning basis by weight vector...
    ...Partitioned into 2 weight subspaces.
---------------------------------------------------------------------------
ANALYZING WEIGHT SUBSPACE: w = [1, 1, 1, 1, 1] (dim = 7)
--> STEP 3: Partitioning this subspace into Sigma_3-components...
    ...Discovered 3 Sigma_3-component(s) with dimensions: [3, 3, 1]
--> STEP 4: Finding Sigma_3-invariants for each component...

  --- component 1 (dim=3) ---
    [\Sigma_3(x3^31)]_(1,1,1,1,1) = Span{
        [x3^31]_(1,1,1,1,1)
        [x2^31]_(1,1,1,1,1)
        [x1^31]_(1,1,1,1,1)
    }

    Found 1 Sigma_3-invariant(s) in this component.
    Invariant q_[1, 1, 1, 1, 1]_1 = [x1^31]_{(1,1,1,1,1)} 
												+ [x2^31]_{(1,1,1,1,1)} 
												+ [x3^31]_{(1,1,1,1,1)}




  --- component 2 (dim=3) ---
    [\Sigma_3(x2*x3^30)]_(1,1,1,1,1) = Span{
        [x2*x3^30]_(1,1,1,1,1)
        [x1*x3^30]_(1,1,1,1,1)
        [x1*x2^30]_(1,1,1,1,1)
    }

    Found 1 Sigma_3-invariant(s) in this component.
    Invariant q_[1, 1, 1, 1, 1]_2 = [x1*x2^30]_{(1,1,1,1,1)} 
												+ [x1*x3^30]_{(1,1,1,1,1)} 
												+ [x2*x3^30]_{(1,1,1,1,1)}

  --- component 3 (dim=1) ---
    [\Sigma_3(x1*x2^2*x3^28)]_(1,1,1,1,1) = Span{
        [x1*x2^2*x3^28]_(1,1,1,1,1)
    }

    Found 1 Sigma_3-invariant(s) in this component.
    Invariant q_[1, 1, 1, 1, 1]_3 = [x1*x2^2*x3^28]_{(1,1,1,1,1)}

---------------------------------------------------------------------------
ANALYZING WEIGHT SUBSPACE: w = [3, 2, 2, 2] (dim = 7)
--> STEP 3: Partitioning this subspace into Sigma_3-components...
    ...Discovered 3 Sigma_3-component(s) with dimensions: [3, 3, 1]
--> STEP 4: Finding Sigma_3-invariants for each component...

  --- component 1 (dim=3) ---
    [\Sigma_3(x1*x2^15*x3^15)]_(3,2,2,2) = Span{
        [x1*x2^15*x3^15]_(3,2,2,2)
        [x1^15*x2*x3^15]_(3,2,2,2)
        [x1^15*x2^15*x3]_(3,2,2,2)
    }

    Found 1 Sigma_3-invariant(s) in this component.
    Invariant q_[3, 2, 2, 2]_1 = [x1*x2^15*x3^15]_{(3,2,2,2)}
											+ [x1^15*x2*x3^15]_{(3,2,2,2)}
											+ [x1^15*x2^15*x3]_{(3,2,2,2)}

  --- component 2 (dim=3) ---
    [\Sigma_3(x1^3*x2^13*x3^15)]_(3,2,2,2) = Span{
        [x1^3*x2^13*x3^15]_(3,2,2,2)
        [x1^3*x2^15*x3^13]_(3,2,2,2)
        [x1^15*x2^3*x3^13]_(3,2,2,2)
    }

    Found 1 Sigma_3-invariant(s) in this component.
    Invariant q_[3, 2, 2, 2]_2 = [x1^15*x2^3*x3^13]_{(3,2,2,2)} 
											+ [x1^3*x2^13*x3^15]_{(3,2,2,2)}
											+ [x1^3*x2^15*x3^13]_{(3,2,2,2)}

  --- component 3 (dim=1) ---
    [\Sigma_3(x1^7*x2^11*x3^13)]_(3,2,2,2) = Span{
        [x1^7*x2^11*x3^13]_(3,2,2,2)
    }

    Found 1 Sigma_3-invariant(s) in this component.
    Invariant q_[3, 2, 2, 2]_3 = [x1^7*x2^11*x3^13]_{(3,2,2,2)}


============================================================================
AGGREGATED Sigma_3 INVARIANTS
============================================================================
Total dimension of (QP_{3})_{31}^Sigma_3: 6
  Total Sigma_3 Inv 1 = [x1^31] + [x2^31] + [x3^31]
  Total Sigma_3 Inv 2 = [x1*x2^30] + [x1*x3^30] + [x2*x3^30]
  Total Sigma_3 Inv 3 = [x1*x2^2*x3^28]
  Total Sigma_3 Inv 4 = [x1*x2^15*x3^15] + [x1^15*x2*x3^15] 
											+ [x1^15*x2^15*x3]
  Total Sigma_3 Inv 5 = [x1^15*x2^3*x3^13] + [x1^3*x2^13*x3^15] 
											+ [x1^3*x2^15*x3^13]
  Total Sigma_3 Inv 6 = [x1^7*x2^11*x3^13]

--> STEP 5: Finding GL_3 invariants from the aggregated Sigma_3 basis...
============================================================================
FINAL GL_3 INVARIANTS for (QP_3)_31
============================================================================
Dimension of (QP_{3})_{31}^GL_3: 2
  GL_3 Invariant 1 = [x1*x2^15*x3^15] + [x1*x2^30] + [x1*x3^30] 
								+ [x1^15*x2*x3^15] + [x1^15*x2^15*x3]
								+ [x1^15*x2^3*x3^13] + [x1^3*x2^13*x3^15] 
								+ [x1^3*x2^15*x3^13] + [x1^31]
								+ [x1^7*x2^11*x3^13] + [x2*x3^30] 
								+ [x2^31] + [x3^31]
								
  GL_3 Invariant 2 = [x1*x2^15*x3^15] + [x1*x2^2*x3^28] 
								+ [x1^15*x2*x3^15] + [x1^15*x2^15*x3]
								+ [x1^15*x2^3*x3^13] + [x1^3*x2^13*x3^15] 
								+ [x1^3*x2^15*x3^13] + [x1^7*x2^11*x3^13]

============================================================================
ENTIRE COMPUTATION PROCESS COMPLETED
Total execution time: 16.55 seconds
============================================================================
\end{lstlisting}

\noindent

\medskip

Thus, the output of our algorithm, as computed by both methods above, shows that the invariant space \( (Q\mathcal{P}_3)^{GL_3}_{31} \) has dimension 2, and also provides an explicit basis for it. This result is consistent with the fully manual computations presented in~\cite{Sum2}.

\medskip

Another illustration for the output below of our algorithm for the case $k = 4$ va $d = 23.$ This obtained result confirms the hand calculations in our previous work in \cite{Phuc2}.

\medskip

\newpage
\begin{lstlisting}
============================================================================
STARTING INVARIANT SUBSPACE COMPUTATION
Case: k = 4, d = 23
============================================================================
--> STEP 1: Computing global admissible basis for (QP_4)_23...
    ... Found a global basis with 155 monomials.
--> STEP 2: Partitioning basis by weight vector...
    ... Partitioned into 1 weight subspaces.
---------------------------------------------------------------------------
ANALYZING WEIGHT SUBSPACE: w = [3, 2, 2, 1] (dim = 155)
--> STEP 3: Finding indecomposable Sigma_4-submodules...
    ... Discovered 5 indecomposable submodule(s).
--> STEP 4: Finding Sigma_4-invariants for each submodule...
 --- Module 1 (dim=24) ---
    [\Sigma_4(x2*x3^7*x4^15)]_{(3,2,2,1)} = Span{
        [x1*x2^15*x3^7]_{(3,2,2,1)}
        [x1*x2^15*x4^7]_{(3,2,2,1)}
        [x1*x2^7*x3^15]_{(3,2,2,1)}
        [x1*x2^7*x4^15]_{(3,2,2,1)}
        [x1*x3^15*x4^7]_{(3,2,2,1)}
        [x1*x3^7*x4^15]_{(3,2,2,1)}
        [x1^15*x2*x3^7]_{(3,2,2,1)}
        [x1^15*x2*x4^7]_{(3,2,2,1)}
        [x1^15*x2^7*x3]_{(3,2,2,1)}
        [x1^15*x2^7*x4]_{(3,2,2,1)}
        [x1^15*x3*x4^7]_{(3,2,2,1)}
        [x1^15*x3^7*x4]_{(3,2,2,1)}
        [x1^7*x2*x3^15]_{(3,2,2,1)}
        [x1^7*x2*x4^15]_{(3,2,2,1)}
        [x1^7*x2^15*x3]_{(3,2,2,1)}
        [x1^7*x2^15*x4]_{(3,2,2,1)}
        [x1^7*x3*x4^15]_{(3,2,2,1)}
        [x1^7*x3^15*x4]_{(3,2,2,1)}
        [x2*x3^15*x4^7]_{(3,2,2,1)}
        [x2*x3^7*x4^15]_{(3,2,2,1)}
        [x2^15*x3*x4^7]_{(3,2,2,1)}
        [x2^15*x3^7*x4]_{(3,2,2,1)}
        [x2^7*x3*x4^15]_{(3,2,2,1)}
        [x2^7*x3^15*x4]_{(3,2,2,1)}
    }
    Found 1 Sigma_4-invariant(s) in this submodule.
    Invariant 1-1 = [x1*x2^15*x3^7 + x1*x2^15*x4^7 + x1*x2^7*x3^15 
					 + x1*x2^7*x4^15 + x1*x3^15*x4^7 + x1*x3^7*x4^15
                     + x1^15*x2*x3^7 + x1^15*x2*x4^7 + x1^15*x2^7*x3
                     + x1^15*x2^7*x4 + x1^15*x3*x4^7 + x1^15*x3^7*x4
                     + x1^7*x2*x3^15 + x1^7*x2*x4^15 + x1^7*x2^15*x3 
                     + x1^7*x2^15*x4 + x1^7*x3*x4^15 + x1^7*x3^15*x4 
                     + x2*x3^15*x4^7 + x2*x3^7*x4^15 + x2^15*x3*x4^7 
					 + x2^15*x3^7*x4 + x2^7*x3*x4^15 + x2^7*x3^15*x4]_{(3,2,2,1)}

  --- Module 2 (dim=12) ---
    [\Sigma_4(x2^3*x3^5*x4^15)]_{(3,2,2,1)} = Span{
        [x1^15*x2^3*x3^5]_{(3,2,2,1)}
        [x1^15*x2^3*x4^5]_{(3,2,2,1)}
        [x1^15*x3^3*x4^5]_{(3,2,2,1)}
        [x1^3*x2^15*x3^5]_{(3,2,2,1)}
        [x1^3*x2^15*x4^5]_{(3,2,2,1)}
        [x1^3*x2^5*x3^15]_{(3,2,2,1)}
        [x1^3*x2^5*x4^15]_{(3,2,2,1)}
        [x1^3*x3^15*x4^5]_{(3,2,2,1)}
        [x1^3*x3^5*x4^15]_{(3,2,2,1)}
        [x2^15*x3^3*x4^5]_{(3,2,2,1)}
        [x2^3*x3^15*x4^5]_{(3,2,2,1)}
        [x2^3*x3^5*x4^15]_{(3,2,2,1)}
    }
    Found 1 Sigma_4-invariant(s) in this submodule.
    Invariant 2-1 = [x1^15*x2^3*x3^5 + x1^15*x2^3*x4^5 + x1^15*x3^3*x4^5
         + x1^3*x2^15*x3^5 + x1^3*x2^15*x4^5 + x1^3*x2^5*x3^15
         + x1^3*x2^5*x4^15 + x1^3*x3^15*x4^5 + x1^3*x3^5*x4^15
         + x2^15*x3^3*x4^5 + x2^3*x3^15*x4^5 + x2^3*x3^5*x4^15]_{(3,2,2,1)}

  --- Module 3 (dim=20) ---
    [\Sigma_4(x2^3*x3^7*x4^13)]_{(3,2,2,1)} = Span{
        [x1^3*x2^13*x3^7]_{(3,2,2,1)}
        [x1^3*x2^13*x4^7]_{(3,2,2,1)}
        [x1^3*x2^7*x3^13]_{(3,2,2,1)}
        [x1^3*x2^7*x4^13]_{(3,2,2,1)}
        [x1^3*x3^13*x4^7]_{(3,2,2,1)}
        [x1^3*x3^7*x4^13]_{(3,2,2,1)}
        [x1^7*x2^11*x3^5]_{(3,2,2,1)}
        [x1^7*x2^11*x4^5]_{(3,2,2,1)}
        [x1^7*x2^3*x3^13]_{(3,2,2,1)}
        [x1^7*x2^3*x4^13]_{(3,2,2,1)}
        [x1^7*x2^7*x3^9]_{(3,2,2,1)}
        [x1^7*x2^7*x4^9]_{(3,2,2,1)}
        [x1^7*x3^11*x4^5]_{(3,2,2,1)}
        [x1^7*x3^3*x4^13]_{(3,2,2,1)}
        [x1^7*x3^7*x4^9]_{(3,2,2,1)}
        [x2^3*x3^13*x4^7]_{(3,2,2,1)}
        [x2^3*x3^7*x4^13]_{(3,2,2,1)}
        [x2^7*x3^11*x4^5]_{(3,2,2,1)}
        [x2^7*x3^3*x4^13]_{(3,2,2,1)}
        [x2^7*x3^7*x4^9]_{(3,2,2,1)}
    }



    Found 1 Sigma_4-invariant(s) in this submodule.
    Invariant 3-1 = [x1^3*x2^13*x3^7 + x1^3*x2^13*x4^7 + x1^3*x3^13*x4^7
                     + x1^7*x2^11*x3^5 + x1^7*x2^11*x4^5 + x1^7*x2^3*x3^13
                     + x1^7*x2^3*x4^13 + x1^7*x2^7*x3^9 + x1^7*x2^7*x4^9
                     + x1^7*x3^11*x4^5 + x1^7*x3^3*x4^13 + x1^7*x3^7*x4^9
                     + x2^3*x3^13*x4^7 + x2^7*x3^11*x4^5 + x2^7*x3^3*x4^13
                     + x2^7*x3^7*x4^9]_{(3,2,2,1)}

  --- Module 4 (dim=24) ---
    [\Sigma_4(x1*x2*x3^6*x4^15)]_{(3,2,2,1)} = Span{
        [x1*x2*x3^15*x4^6]_{(3,2,2,1)}
        [x1*x2*x3^6*x4^15]_{(3,2,2,1)}
        [x1*x2^15*x3*x4^6]_{(3,2,2,1)}
        [x1*x2^15*x3^2*x4^5]_{(3,2,2,1)}
        [x1*x2^15*x3^3*x4^4]_{(3,2,2,1)}
        [x1*x2^15*x3^6*x4]_{(3,2,2,1)}
        [x1*x2^2*x3^15*x4^5]_{(3,2,2,1)}
        [x1*x2^2*x3^5*x4^15]_{(3,2,2,1)}
        [x1*x2^3*x3^15*x4^4]_{(3,2,2,1)}
        [x1*x2^3*x3^4*x4^15]_{(3,2,2,1)}
        [x1*x2^6*x3*x4^15]_{(3,2,2,1)}
        [x1*x2^6*x3^15*x4]_{(3,2,2,1)}
        [x1^15*x2*x3*x4^6]_{(3,2,2,1)}
        [x1^15*x2*x3^2*x4^5]_{(3,2,2,1)}
        [x1^15*x2*x3^3*x4^4]_{(3,2,2,1)}
        [x1^15*x2*x3^6*x4]_{(3,2,2,1)}
        [x1^15*x2^3*x3*x4^4]_{(3,2,2,1)}
        [x1^15*x2^3*x3^4*x4]_{(3,2,2,1)}
        [x1^3*x2*x3^15*x4^4]_{(3,2,2,1)}
        [x1^3*x2*x3^4*x4^15]_{(3,2,2,1)}
        [x1^3*x2^15*x3*x4^4]_{(3,2,2,1)}
        [x1^3*x2^15*x3^4*x4]_{(3,2,2,1)}
        [x1^3*x2^4*x3*x4^15]_{(3,2,2,1)}
        [x1^3*x2^4*x3^15*x4]_{(3,2,2,1)}
    }

    Found 1 Sigma_4-invariant(s) in this submodule.
    Invariant 4-1 = [x1*x2*x3^15*x4^6 + x1*x2*x3^6*x4^15 + x1*x2^15*x3*x4^6
             + x1*x2^15*x3^6*x4 + x1*x2^6*x3*x4^15 + x1*x2^6*x3^15*x4
             + x1^15*x2*x3*x4^6 + x1^15*x2*x3^6*x4 + x1^15*x2^3*x3*x4^4
             + x1^15*x2^3*x3^4*x4 + x1^3*x2*x3^15*x4^4 + x1^3*x2*x3^4*x4^15
             + x1^3*x2^15*x3*x4^4 + x1^3*x2^15*x3^4*x4 + x1^3*x2^4*x3*x4^15
             + x1^3*x2^4*x3^15*x4]_{(3,2,2,1)}

  --- Module 5 (dim=75) ---
    [\Sigma_4(x1*x2*x3^7*x4^14)]_{(3,2,2,1)} = Span{
        [x1*x2*x3^14*x4^7]_{(3,2,2,1)}
        [x1*x2*x3^7*x4^14]_{(3,2,2,1)}
        [x1*x2^14*x3*x4^7]_{(3,2,2,1)}
        [x1*x2^14*x3^3*x4^5]_{(3,2,2,1)}
        [x1*x2^14*x3^7*x4]_{(3,2,2,1)}
        [x1*x2^2*x3^13*x4^7]_{(3,2,2,1)}
        [x1*x2^2*x3^7*x4^13]_{(3,2,2,1)}
        [x1*x2^3*x3^12*x4^7]_{(3,2,2,1)}
        [x1*x2^3*x3^13*x4^6]_{(3,2,2,1)}
        [x1*x2^3*x3^14*x4^5]_{(3,2,2,1)}
        [x1*x2^3*x3^5*x4^14]_{(3,2,2,1)}
        [x1*x2^3*x3^6*x4^13]_{(3,2,2,1)}
        [x1*x2^3*x3^7*x4^12]_{(3,2,2,1)}
        [x1*x2^6*x3^11*x4^5]_{(3,2,2,1)}
        [x1*x2^6*x3^3*x4^13]_{(3,2,2,1)}
        [x1*x2^6*x3^7*x4^9]_{(3,2,2,1)}
        [x1*x2^7*x3*x4^14]_{(3,2,2,1)}
        [x1*x2^7*x3^10*x4^5]_{(3,2,2,1)}
        [x1*x2^7*x3^11*x4^4]_{(3,2,2,1)}
        [x1*x2^7*x3^14*x4]_{(3,2,2,1)}
        [x1*x2^7*x3^2*x4^13]_{(3,2,2,1)}
        [x1*x2^7*x3^3*x4^12]_{(3,2,2,1)}
        [x1*x2^7*x3^6*x4^9]_{(3,2,2,1)}
        [x1*x2^7*x3^7*x4^8]_{(3,2,2,1)}
        [x1^3*x2*x3^12*x4^7]_{(3,2,2,1)}
        [x1^3*x2*x3^13*x4^6]_{(3,2,2,1)}
        [x1^3*x2*x3^14*x4^5]_{(3,2,2,1)}
        [x1^3*x2*x3^5*x4^14]_{(3,2,2,1)}
        [x1^3*x2*x3^6*x4^13]_{(3,2,2,1)}
        [x1^3*x2*x3^7*x4^12]_{(3,2,2,1)}
        [x1^3*x2^13*x3*x4^6]_{(3,2,2,1)}
        [x1^3*x2^13*x3^2*x4^5]_{(3,2,2,1)}
        [x1^3*x2^13*x3^3*x4^4]_{(3,2,2,1)}
        [x1^3*x2^13*x3^6*x4]_{(3,2,2,1)}
        [x1^3*x2^3*x3^12*x4^5]_{(3,2,2,1)}
        [x1^3*x2^3*x3^13*x4^4]_{(3,2,2,1)}
        [x1^3*x2^3*x3^4*x4^13]_{(3,2,2,1)}
        [x1^3*x2^3*x3^5*x4^12]_{(3,2,2,1)}
        [x1^3*x2^4*x3^11*x4^5]_{(3,2,2,1)}
        [x1^3*x2^4*x3^3*x4^13]_{(3,2,2,1)}
        [x1^3*x2^4*x3^7*x4^9]_{(3,2,2,1)}
        [x1^3*x2^5*x3*x4^14]_{(3,2,2,1)}
        [x1^3*x2^5*x3^10*x4^5]_{(3,2,2,1)}
        [x1^3*x2^5*x3^11*x4^4]_{(3,2,2,1)}
        [x1^3*x2^5*x3^14*x4]_{(3,2,2,1)}
        [x1^3*x2^5*x3^2*x4^13]_{(3,2,2,1)}
        [x1^3*x2^5*x3^3*x4^12]_{(3,2,2,1)}
        [x1^3*x2^5*x3^6*x4^9]_{(3,2,2,1)}
        [x1^3*x2^5*x3^7*x4^8]_{(3,2,2,1)}
        [x1^3*x2^7*x3*x4^12]_{(3,2,2,1)}
        [x1^3*x2^7*x3^12*x4]_{(3,2,2,1)}
        [x1^3*x2^7*x3^4*x4^9]_{(3,2,2,1)}
        [x1^3*x2^7*x3^5*x4^8]_{(3,2,2,1)}
        [x1^3*x2^7*x3^8*x4^5]_{(3,2,2,1)}
        [x1^3*x2^7*x3^9*x4^4]_{(3,2,2,1)}
        [x1^7*x2*x3*x4^14]_{(3,2,2,1)}
        [x1^7*x2*x3^10*x4^5]_{(3,2,2,1)}
        [x1^7*x2*x3^11*x4^4]_{(3,2,2,1)}
        [x1^7*x2*x3^14*x4]_{(3,2,2,1)}
        [x1^7*x2*x3^2*x4^13]_{(3,2,2,1)}
        [x1^7*x2*x3^3*x4^12]_{(3,2,2,1)}
        [x1^7*x2*x3^6*x4^9]_{(3,2,2,1)}
        [x1^7*x2*x3^7*x4^8]_{(3,2,2,1)}
        [x1^7*x2^11*x3*x4^4]_{(3,2,2,1)}
        [x1^7*x2^11*x3^4*x4]_{(3,2,2,1)}
        [x1^7*x2^3*x3*x4^12]_{(3,2,2,1)}
        [x1^7*x2^3*x3^12*x4]_{(3,2,2,1)}
        [x1^7*x2^3*x3^4*x4^9]_{(3,2,2,1)}
        [x1^7*x2^3*x3^5*x4^8]_{(3,2,2,1)}
        [x1^7*x2^3*x3^8*x4^5]_{(3,2,2,1)}
        [x1^7*x2^3*x3^9*x4^4]_{(3,2,2,1)}
        [x1^7*x2^7*x3*x4^8]_{(3,2,2,1)}
        [x1^7*x2^7*x3^8*x4]_{(3,2,2,1)}
        [x1^7*x2^9*x3^2*x4^5]_{(3,2,2,1)}
        [x1^7*x2^9*x3^3*x4^4]_{(3,2,2,1)}
    }

    Found 4 Sigma_4-invariant(s) in this submodule.
    Invariant 5-1 = [x1*x2*x3^7*x4^14 + x1*x2^3*x3^13*x4^6 
		+ x1*x2^3*x3^14*x4^5
        + x1*x2^6*x3^11*x4^5 + x1*x2^7*x3*x4^14 + x1*x2^7*x3^14*x4
        + x1*x2^7*x3^6*x4^9 + x1*x2^7*x3^7*x4^8 + x1^3*x2*x3^13*x4^6
        + x1^3*x2^13*x3*x4^6 + x1^3*x2^13*x3^6*x4 + x1^3*x2^3*x3^12*x4^5
        + x1^3*x2^4*x3^11*x4^5 + x1^3*x2^5*x3^6*x4^9 + x1^3*x2^5*x3^7*x4^8
        + x1^3*x2^7*x3^4*x4^9 + x1^3*x2^7*x3^9*x4^4 + x1^7*x2*x3*x4^14
        + x1^7*x2*x3^14*x4 + x1^7*x2*x3^6*x4^9 + x1^7*x2*x3^7*x4^8
        + x1^7*x2^11*x3*x4^4 + x1^7*x2^11*x3^4*x4 + x1^7*x2^3*x3*x4^12
        + x1^7*x2^3*x3^12*x4 + x1^7*x2^3*x3^4*x4^9 + x1^7*x2^3*x3^9*x4^4
        + x1^7*x2^7*x3*x4^8 + x1^7*x2^7*x3^8*x4]_{(3,2,2,1)}

    Invariant 5-2 = [x1*x2^3*x3^14*x4^5 + x1*x2^3*x3^5*x4^14 
        + x1*x2^7*x3^10*x4^5
        + x1*x2^7*x3^3*x4^12 + x1^3*x2*x3^14*x4^5 + x1^3*x2*x3^5*x4^14
        + x1^3*x2^13*x3^2*x4^5 + x1^3*x2^13*x3^3*x4^4
        + x1^3*x2^3*x3^12*x4^5 + x1^3*x2^3*x3^5*x4^12 + x1^3*x2^5*x3*x4^14
        + x1^3*x2^5*x3^14*x4 + x1^3*x2^7*x3*x4^12 + x1^3*x2^7*x3^12*x4
        + x1^3*x2^7*x3^8*x4^5 + x1^3*x2^7*x3^9*x4^4 + x1^7*x2*x3^10*x4^5
        + x1^7*x2*x3^3*x4^12 + x1^7*x2^3*x3*x4^12 + x1^7*x2^3*x3^12*x4
        + x1^7*x2^7*x3*x4^8 + x1^7*x2^7*x3^8*x4 + x1^7*x2^9*x3^2*x4^5
        + x1^7*x2^9*x3^3*x4^4]_{(3,2,2,1)}

    Invariant 5-3 = [x1*x2^3*x3^14*x4^5 + x1*x2^3*x3^6*x4^13 
      + x1*x2^6*x3^3*x4^13
      + x1*x2^6*x3^7*x4^9 + x1*x2^7*x3^10*x4^5 + x1*x2^7*x3^3*x4^12
      + x1*x2^7*x3^6*x4^9 + x1*x2^7*x3^7*x4^8 + x1^3*x2*x3^14*x4^5
      + x1^3*x2*x3^7*x4^12 + x1^3*x2^13*x3^2*x4^5 + x1^3*x2^13*x3^3*x4^4
      + x1^3*x2^3*x3^12*x4^5 + x1^3*x2^3*x3^13*x4^4
      + x1^3*x2^3*x3^4*x4^13 + x1^3*x2^4*x3^3*x4^13 + x1^3*x2^4*x3^7*x4^9
      + x1^3*x2^5*x3^6*x4^9 + x1^3*x2^5*x3^7*x4^8 + x1^3*x2^7*x3^4*x4^9
      + x1^3*x2^7*x3^8*x4^5 + x1^7*x2*x3^10*x4^5 + x1^7*x2*x3^3*x4^12
      + x1^7*x2*x3^6*x4^9 + x1^7*x2*x3^7*x4^8 + x1^7*x2^3*x3^4*x4^9
      + x1^7*x2^3*x3^9*x4^4 + x1^7*x2^9*x3^2*x4^5 + x1^7*x2^9*x3^3*x4^4]_{(3,2,2,1)}

    Invariant 5-4 = [x1*x2^3*x3^14*x4^5 + x1*x2^3*x3^7*x4^12
      + x1*x2^6*x3^11*x4^5
      + x1*x2^6*x3^7*x4^9 + x1^3*x2^3*x3^12*x4^5 + x1^3*x2^4*x3^11*x4^5
      + x1^3*x2^4*x3^7*x4^9 + x1^3*x2^5*x3^6*x4^9 + x1^3*x2^5*x3^7*x4^8]_{(3,2,2,1)}

============================================================================
AGGREGATED Sigma_4 INVARIANTS
============================================================================
Total dimension of (QP_{4})_{23}^Sigma_4: 8
   Total Sigma_4 Inv 1 = [x1*x2^15*x3^7 + x1*x2^15*x4^7 + x1*x2^7*x3^15 
					 + x1*x2^7*x4^15 + x1*x3^15*x4^7 + x1*x3^7*x4^15
                     + x1^15*x2*x3^7 + x1^15*x2*x4^7 + x1^15*x2^7*x3
                     + x1^15*x2^7*x4 + x1^15*x3*x4^7 + x1^15*x3^7*x4
                     + x1^7*x2*x3^15 + x1^7*x2*x4^15 + x1^7*x2^15*x3 
                     + x1^7*x2^15*x4 + x1^7*x3*x4^15 + x1^7*x3^15*x4 
                     + x2*x3^15*x4^7 + x2*x3^7*x4^15 + x2^15*x3*x4^7 
					 + x2^15*x3^7*x4 + x2^7*x3*x4^15 + x2^7*x3^15*x4]

 Total Sigma_4 Inv 2 = [x1^15*x2^3*x3^5 + x1^15*x2^3*x4^5 + x1^15*x3^3*x4^5
         + x1^3*x2^15*x3^5 + x1^3*x2^15*x4^5 + x1^3*x2^5*x3^15
         + x1^3*x2^5*x4^15 + x1^3*x3^15*x4^5 + x1^3*x3^5*x4^15
         + x2^15*x3^3*x4^5 + x2^3*x3^15*x4^5 + x2^3*x3^5*x4^15]

 Total Sigma_4 Inv 3 = [x1^3*x2^13*x3^7 + x1^3*x2^13*x4^7 + x1^3*x3^13*x4^7
                     + x1^7*x2^11*x3^5 + x1^7*x2^11*x4^5 + x1^7*x2^3*x3^13
                     + x1^7*x2^3*x4^13 + x1^7*x2^7*x3^9 + x1^7*x2^7*x4^9
                     + x1^7*x3^11*x4^5 + x1^7*x3^3*x4^13 + x1^7*x3^7*x4^9
                     + x2^3*x3^13*x4^7 + x2^7*x3^11*x4^5 + x2^7*x3^3*x4^13
                     + x2^7*x3^7*x4^9]

 Total Sigma_4 Inv 4 = [x1*x2*x3^15*x4^6 + x1*x2*x3^6*x4^15 
			       + x1*x2^15*x3*x4^6
             + x1*x2^15*x3^6*x4 + x1*x2^6*x3*x4^15 + x1*x2^6*x3^15*x4
             + x1^15*x2*x3*x4^6 + x1^15*x2*x3^6*x4 + x1^15*x2^3*x3*x4^4
             + x1^15*x2^3*x3^4*x4 + x1^3*x2*x3^15*x4^4 + x1^3*x2*x3^4*x4^15
             + x1^3*x2^15*x3*x4^4 + x1^3*x2^15*x3^4*x4 + x1^3*x2^4*x3*x4^15
             + x1^3*x2^4*x3^15*x4]

 Total Sigma_4 Inv 5 = [x1*x2*x3^7*x4^14 + x1*x2^3*x3^13*x4^6 
		    + x1*x2^3*x3^14*x4^5
        + x1*x2^6*x3^11*x4^5 + x1*x2^7*x3*x4^14 + x1*x2^7*x3^14*x4
        + x1*x2^7*x3^6*x4^9 + x1*x2^7*x3^7*x4^8 + x1^3*x2*x3^13*x4^6
        + x1^3*x2^13*x3*x4^6 + x1^3*x2^13*x3^6*x4 + x1^3*x2^3*x3^12*x4^5
        + x1^3*x2^4*x3^11*x4^5 + x1^3*x2^5*x3^6*x4^9 + x1^3*x2^5*x3^7*x4^8
        + x1^3*x2^7*x3^4*x4^9 + x1^3*x2^7*x3^9*x4^4 + x1^7*x2*x3*x4^14
        + x1^7*x2*x3^14*x4 + x1^7*x2*x3^6*x4^9 + x1^7*x2*x3^7*x4^8
        + x1^7*x2^11*x3*x4^4 + x1^7*x2^11*x3^4*x4 + x1^7*x2^3*x3*x4^12
        + x1^7*x2^3*x3^12*x4 + x1^7*x2^3*x3^4*x4^9 + x1^7*x2^3*x3^9*x4^4
        + x1^7*x2^7*x3*x4^8 + x1^7*x2^7*x3^8*x4]

 Total Sigma_4 Inv 6 = [x1*x2^3*x3^14*x4^5 + x1*x2^3*x3^5*x4^14 
        + x1*x2^7*x3^10*x4^5
        + x1*x2^7*x3^3*x4^12 + x1^3*x2*x3^14*x4^5 + x1^3*x2*x3^5*x4^14
        + x1^3*x2^13*x3^2*x4^5 + x1^3*x2^13*x3^3*x4^4
        + x1^3*x2^3*x3^12*x4^5 + x1^3*x2^3*x3^5*x4^12 + x1^3*x2^5*x3*x4^14
        + x1^3*x2^5*x3^14*x4 + x1^3*x2^7*x3*x4^12 + x1^3*x2^7*x3^12*x4
        + x1^3*x2^7*x3^8*x4^5 + x1^3*x2^7*x3^9*x4^4 + x1^7*x2*x3^10*x4^5
        + x1^7*x2*x3^3*x4^12 + x1^7*x2^3*x3*x4^12 + x1^7*x2^3*x3^12*x4
        + x1^7*x2^7*x3*x4^8 + x1^7*x2^7*x3^8*x4 + x1^7*x2^9*x3^2*x4^5
        + x1^7*x2^9*x3^3*x4^4]

 Total Sigma_4 Inv 7 = [x1*x2^3*x3^14*x4^5 + x1*x2^3*x3^6*x4^13 
      + x1*x2^6*x3^3*x4^13
      + x1*x2^6*x3^7*x4^9 + x1*x2^7*x3^10*x4^5 + x1*x2^7*x3^3*x4^12
      + x1*x2^7*x3^6*x4^9 + x1*x2^7*x3^7*x4^8 + x1^3*x2*x3^14*x4^5
      + x1^3*x2*x3^7*x4^12 + x1^3*x2^13*x3^2*x4^5 + x1^3*x2^13*x3^3*x4^4
      + x1^3*x2^3*x3^12*x4^5 + x1^3*x2^3*x3^13*x4^4
      + x1^3*x2^3*x3^4*x4^13 + x1^3*x2^4*x3^3*x4^13 + x1^3*x2^4*x3^7*x4^9
      + x1^3*x2^5*x3^6*x4^9 + x1^3*x2^5*x3^7*x4^8 + x1^3*x2^7*x3^4*x4^9
      + x1^3*x2^7*x3^8*x4^5 + x1^7*x2*x3^10*x4^5 + x1^7*x2*x3^3*x4^12
      + x1^7*x2*x3^6*x4^9 + x1^7*x2*x3^7*x4^8 + x1^7*x2^3*x3^4*x4^9
      + x1^7*x2^3*x3^9*x4^4 + x1^7*x2^9*x3^2*x4^5 + x1^7*x2^9*x3^3*x4^4]

 Total Sigma_4 Inv 8 = [x1*x2^3*x3^14*x4^5 + x1*x2^3*x3^7*x4^12
 + x1*x2^6*x3^11*x4^5
 + x1*x2^6*x3^7*x4^9 + x1^3*x2^3*x3^12*x4^5 + x1^3*x2^4*x3^11*x4^5
 + x1^3*x2^4*x3^7*x4^9 + x1^3*x2^5*x3^6*x4^9 + x1^3*x2^5*x3^7*x4^8]

--> STEP 5: Finding GL_4 invariants from the aggregated Sigma_4 basis...

============================================================================
FINAL GL_4 INVARIANTS for (QP_4)_23
============================================================================
Dimension of (QP_{4})_{23}^GL_4: 1
  GL_4 Invariant 1 = [x1*x2*x3^15*x4^6 + x1*x2*x3^6*x4^15 + x1*x2*x3^7*x4^14
                      + x1*x2^15*x3*x4^6 + x1*x2^15*x3^6*x4 
					            + x1*x2^3*x3^13*x4^6 + x1*x2^3*x3^14*x4^5
                      + x1*x2^3*x3^5*x4^14 + x1*x2^3*x3^6*x4^13
                      + x1*x2^6*x3*x4^15 + x1*x2^6*x3^11*x4^5 
					            + x1*x2^6*x3^15*x4  + x1*x2^6*x3^3*x4^13 
                      + x1*x2^6*x3^7*x4^9 + x1*x2^7*x3*x4^14
                      + x1*x2^7*x3^14*x4 + x1^15*x2*x3*x4^6 
                      + x1^15*x2*x3^6*x4 + x1^15*x2^3*x3*x4^4 
                      + x1^15*x2^3*x3^4*x4 + x1^3*x2*x3^13*x4^6
                      + x1^3*x2*x3^15*x4^4 + x1^3*x2*x3^4*x4^15 
                      + x1^3*x2*x3^5*x4^14 + x1^3*x2*x3^7*x4^12
                      + x1^3*x2^13*x3*x4^6 + x1^3*x2^13*x3^6*x4
                      + x1^3*x2^15*x3*x4^4 + x1^3*x2^15*x3^4*x4 
                      + x1^3*x2^3*x3^12*x4^5 + x1^3*x2^3*x3^13*x4^4 
                      + x1^3*x2^3*x3^4*x4^13 + x1^3*x2^3*x3^5*x4^12
                      + x1^3*x2^4*x3*x4^15 + x1^3*x2^4*x3^11*x4^5
                      + x1^3*x2^4*x3^15*x4 + x1^3*x2^4*x3^3*x4^13 
                      + x1^3*x2^4*x3^7*x4^9 + x1^3*x2^5*x3*x4^14
                      + x1^3*x2^5*x3^14*x4 + x1^3*x2^7*x3*x4^12
                      + x1^3*x2^7*x3^12*x4 + x1^7*x2*x3*x4^14
                      + x1^7*x2*x3^14*x4 + x1^7*x2^11*x3*x4^4
                      + x1^7*x2^11*x3^4*x4]
============================================================================
ENTIRE COMPUTATION PROCESS COMPLETED
Total execution time: 481.87 seconds
============================================================================

\end{lstlisting}

\medskip

A general description of our algorithm, in connection with the illustrated output above, will be provided in Part II of this research project. That section will provide a comprehensive algorithm developed with precision and care, including the execution method that produces explicit and accurate results---dimension, basis, and complete list of inadmissible monomials---matching previously verified manual computations. The dimension and an explicit basis of the invariant space are also determined through carefully designed algorithms.

\medskip

As a final observation, Part II of our investigation, in conjunction with Part I detailed herein, has substantially achieved a complete solution to the Peterson hit problem concerning the polynomial algebra $\mathcal{P}_k=\mathbb F_2[x_1, x_2, \ldots, x_k]$ for general degree $d$ and any number of variables $k,$ using an algorithm developed in the \textsc{SageMath} computer algebra system, subject to sufficient computational memory availability. At the same time, the method is applied to develop explicit algorithms for determining the invariant subspace and the image of the Singer algebraic transfer.

\end{document}